\date{June 18, 2021}
\newcommand{\beq}{\begin{equation}}
	\newcommand{\eeq}{\end{equation}}
\newcommand{\ben}{\begin{eqnarray}}
	\newcommand{\een}{\end{eqnarray}}
\newcommand{\beno}{\begin{eqnarray*}}
	\newcommand{\eeno}{\end{eqnarray*}}
\numberwithin{equation}{section} 
\newtheorem{Thm}{Theorem}[section] 
\newtheorem{Lem}[Thm]{Lemma} 
\newtheorem{Cor}[Thm]{Corollary}
\newtheorem{Rem}[Thm]{Remark}
\newtheorem{Def}[Thm]{Definition}
\numberwithin{equation}{section}
\begin{document}
	\title[Liouville type theorems for the MHD and Hall-MHD systems]{\bf Liouville type theorems for the 3D stationary MHD and Hall-MHD equations with non-zero constant vectors at infinity}
	
	\author{Wendong~Wang}
	\address[Wendong~Wang]{School of Mathematical Sciences, Dalian University of Technology, Dalian, 116024,  China}
	\email{wendong@dlut.edu.cn}
	
	\author{Guoxu~Yang}
	\address[Guoxu~Yang]{School of Mathematical Sciences, Dalian University of Technology, Dalian, 116024,  China}
	\email{guoxu\_dlut@outlook.com}

	\date{\today}
	\maketitle

	\begin{abstract}In this paper, we investigate Liouville type theorems for the three-dimensional steady-state MHD or Hall-MHD system under some asymptotic assumptions at infinity. Firstly, for the Hall-MHD system we obtain that $u$ and $B$ are constant vectors for any fluid viscosity, magnetic resistivity or Hall-coefficient when the magnetic field $B$ tends to a non-zero constant vector at infinity while the velocity field $u$ tends to $0$. Secondly, it also follows that $u$ and $B$ are constant for the Hall-MHD system when the velocity field tends to a constant vector at infinity while the magnetic field tends to $0$ without  any assumptions on viscosity,  magnetic resistivity or Hall-coefficient. One main difficulty lies in the Hall term, and we obtain the $L^p$ estimates of a generalized Oseen system with some supercritical terms via Lizorkin's theory and  prove that the operator is stable by exploring Kato's stability theorem. Moreover,  some similar results for the degenerate fluid viscosity or magnetic resistivity for the MHD system  are also obtained, which is independent of interest.
	\end{abstract}
	
	{\small {\bf Keywords:} Liouville type theorems; MHD system; Hall-MHD system; D-solutions }

	{\bf 2010 Mathematics Subject Classification:} 35Q30, 35Q10, 76D05.
	
	\setcounter{equation}{0}

	\section{Introduction}
	
	Consider the following general stationary MHD \& Hall-MHD system in $ \mathbb{R}^3$ : 
	\begin{equation}
		\label{eq:MHD}
		\left\{\begin{array}{l}
			-\kappa \Delta u+u \cdot \nabla u+\nabla p=B \cdot \nabla B, \\
			-\nu \Delta B+u \cdot \nabla B-B \cdot \nabla u=\alpha \nabla \times((\nabla \times B) \times B), \\
			\operatorname{div} u=0, \quad \operatorname{div} B=0,
		\end{array}\right.
	\end{equation}
	where $u =(u_1,u_2,u_3)$ is the velocity field of the fluid flows, $B  = (B_1,B_2,B_3)$ is the magnetic field, and $p $ is the pressure of the flows. In addition, $\kappa>0$, $\nu>0$, and $\alpha \geq 0$ are given parameters denoting the fluid viscosity, the magnetic resistivity, and the Hall-coefficient, respectively. When $\alpha = 0$,  the system (\ref{eq:MHD}) is reduced to the MHD system, which describes the steady state of the magnetic properties of electrically conducting fluids, including
	plasmas, liquid metals, etc;  for the physical background
	we refer to Schnack \cite{Sch} and the references therein. When $\alpha >0$,  this system governs the dynamics plasma flows of strong shear magnetic fields as in the solar flares, and has many important applications in the astrophysics (for example, see Chae-Degond-Liu \cite{CDL}). For more physical backgrounds of the Hall-MHD equations, we refer the readers to \cite{For,SU,BT} and the references therein.

	
	In this paper, we focus on the Liouville-type properties of the system (\ref{eq:MHD}), which is
	motivated by the development of Navier-Stokes equations. When $\alpha=0$ and $B \equiv 0$, (\ref{eq:MHD}) is reduced to the Navier-Stokes system
and a very
challenging open question is whether there exists a nontrivial solution when the Dirichlet integral $\int_{\mathbb{R}^3}|\nabla u|^2dx$ is finite, which
dates back to Leray's celebrated paper \cite{Leray} and is explicitly written in Galdi's book (\cite{Galdi}, Remark X. 9.4; see also Tsai's book \cite{Tsai-2018}).
The Liouville type problem without any other assumptions is widely open. Galdi proved the above Liouville type theorem by assuming $u\in L^{\frac92}(\mathbb{R}^3)$ in \cite{Galdi}.
Chae \cite{Chae2014} showed the condition $\triangle u\in L^{\frac65}(\mathbb{R}^3)$ is sufficient for the vanishing property of $u$ by exploring the maximum principle of the head pressure. Chae-Wolf \cite{ChaeWolf} gave an improvement of logarithmic form for Galdi's result
by assuming that $\int_{\mathbb{R}^3} |u|^{\frac92}\{\ln(2+\frac{1}{|u|})\}^{-1}dx<\infty$.
Seregin obtained the conditional criterion $u\in BMO^{-1}(\mathbb{R}^3)$ in \cite{Se}.
For more references on conditional Liouville properties or Liouville properties on special domains, we refer to \cite{KTW,Se2,SeWang,CPZZ,Chae-Wolf2019,Chae2021, LRZ,BGWX} and the references therein.
Relatively speaking, the two-dimensional case is easier because the vorticity of the 2D Navier-Stokes equations satisfies a nice elliptic equation, to which the maximum principle applies. For example, Gilbarg-Weinberger \cite{GW1978} obtained Liouville type theorem provided the Dirichlet energy is finite. As a different type of Liouville property for the 2D Navier-Stokes equations, Koch-Nadirashvili-Seregin-Sverak \cite{KNSS} showed that any bounded solution is a trivial solution as a byproduct of their results on the non-stationary case (see also \cite{BFZ2013} for the unbounded velocity).

However, for the MHD system, the situation is quite different. Due to the lack of maximum principle, there is not much progress in the study of MHD equation. For the 2D MHD equations, Liouville type theorems were proved by assuming the smallness of the norm of the magnetic field in \cite{WW}, and De Nitti et al. \cite{De} removed the smallness assumption. For the 3D MHD equations, Chae-Weng \cite{CW2016} proved that if the smooth solution satisfies D-condition ($\int_{\mathbb{R}^3}\left(|\nabla u|^2+|\nabla B|^2\right) d x<\infty$) and $u \in L^3(\mathbb{R}^3)$, then the solutions $(u, B)$ is identically zero. In \cite{Schulz}, Schulz proved that if the smooth solution $(u, B)$ of the stationary MHD equations is in $L^6(\mathbb{R}^3)$ and $u,B \in BMO^{-1}(\mathbb{R}^3)$ then it is identically zero. Chae-Wolf \cite{Chae-Wolf-MHD} showed that $L^6$ mean oscillations of the potential function of the velocity and magnetic field have certain linear growth by using the technique of \cite{Chae-Wolf2019}. Recently, Li-Pan \cite{LP} proved two forms of Liouville theorems for D-solutions, one is the case where $u\rightarrow(1,0,0)$ and $B\rightarrow0$ for any viscosity and magnetic resistivity; another case is $u\rightarrow0$ and $B\rightarrow(-1,0,0)$ by taking the same viscosity and magnetic resistivity. For more references, we refer to \cite{LLN,FW} and the references therein.

As said in \cite{CDL}: ``{\it Hall-MHD is believed to be an essential feature in the problem of magnetic reconnection. Magnetic 
	reconnection corresponds to changes in the topology of magnetic field lines which are ubiquitously 
	observed in space.}" It's interesting to investigate the mathematical theory of Hall-MHD system. Mathematical derivations of Hall-MHD equations from either two-fluids or kinetic models can be found in \cite{ADFL} and the first existence result of global weak solutions is given in \cite{CDL}.  
The Liouville type problem of the Hall-MHD equations (\ref{eq:MHD}) is started by Chae-Degond-Liu in \cite{CDL}, where they proved that if a smooth solution $(u, B) \in L^{\infty}(\mathbb{R}^3) \cap L^{\frac{9}{2}}\left(\mathbb{R}^3\right)$ and satisfying the D-condition, then $u=B=0$, which was improved in \cite{ZYQ} by removing the bounded-ness condition. Chae-Weng \cite{CW2016} proved that if $u \in L^3(\mathbb{R}^3)$ and with the D-condition for the equations (\ref{eq:MHD}), then $u = B = 0$. Chen-Li-Wang improved this result by only assuming $u \in BMO^{-1}(\mathbb{R}^3)$ and $B\in L^{6,\infty}(\mathbb{R}^3)$ in \cite{CLW} via  $L^q$ estimates of stationary Stokes system.  Recently,  Cho-Neustupa-Yang \cite{CNY} established some new Liouville-type theorems for weak solutions under some conditions on the growth of certain Lebesgue norms of the velocity and the magnetic field. For more references, we refer to \cite{Chae-Wolf-MHD, YX} and the references therein.

Motivated by the mathematical theory of the Oseen system in \cite{Galdi} and the progress of the MHD equation in \cite{LP},  we will focus on studying the Liouville properties of the Hall-MHD system. 

One of  our main results is stated as follows.
\begin{Thm}
	\label{the1.1}
	Let $(u, B)$ be a smooth solution to the Hall-MHD equations (\ref{eq:MHD}) satisfying the finite Dirichlet integral (or D-condition)
	\ben\label{eq:d-condition}
	\int_{\mathbb{R}^3}\left(|\nabla u|^2+|\nabla B|^2\right) d x<\infty,
	\een
	and the asymptotic behavior at infinity
	$$
	u(x) \rightarrow u_{\infty}, \quad B(x) \rightarrow B_{\infty}, \quad \text { uniformly as } \quad|x| \rightarrow \infty,
	$$
	where $u_{\infty} = 0$ and $B_{\infty} = (-1, 0, 0)$. Then it follows that $(u, B) \equiv\left(u_{\infty}, B_{\infty}\right)$ provided that $\alpha \| \nabla B\|_{L^{\infty}\left(\mathbb{R}^3\right)} < \infty$.
\end{Thm}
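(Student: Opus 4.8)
The plan is to reduce the Hall-MHD system to a perturbed Oseen-type system by setting $b = B - B_\infty$ (so $b \to 0$ at infinity and $\nabla b = \nabla B$), and then to run an energy/decay argument showing $u \equiv 0$ and $b \equiv 0$. Writing the magnetic equation in terms of $b$, the term $B\cdot\nabla u = B_\infty\cdot\nabla u + b\cdot\nabla u$ produces a constant-coefficient first-order term $-\partial_{x_1}u$ (since $B_\infty = (-1,0,0)$), which together with $-\nu\Delta b$ gives an Oseen operator acting on $b$; similarly the Hall term $\alpha\nabla\times((\nabla\times B)\times B)$ expands into $\alpha\nabla\times((\nabla\times b)\times B_\infty)$, which is another constant-coefficient first-order (in fact second-order, since $\nabla\times b$ is already one derivative) contribution, plus the genuinely nonlinear piece $\alpha\nabla\times((\nabla\times b)\times b)$. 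The hypothesis $\alpha\|\nabla B\|_{L^\infty} < \infty$ is exactly what is needed to treat this supercritical Hall term as a controlled perturbation: it lets one bound $(\nabla\times b)\times b$ and its curl by $C\|b\|$ (or $C|\nabla b|$) with an $L^\infty$ coefficient, so the Hall contribution behaves like a zeroth/first order term with bounded coefficient rather than a true quadratic nonlinearity in the top-order norm.

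First I would establish decay of $(u, b)$ and their derivatives using the D-condition together with the uniform convergence at infinity: standard Sobolev/Gagliardo–Nirenberg arguments (as in Galdi's treatment of the Oseen and Navier–Stokes problems) give $u, b \in L^6(\mathbb{R}^3)$, $\nabla u, \nabla b \in L^2$, and then interior elliptic estimates applied to the Oseen systems bootstrap this to pointwise decay $|u(x)| + |b(x)| \to 0$ with rates, plus integrability of higher derivatives. This is where I would invoke the $L^p$ estimates for the generalized Oseen system with supercritical terms and the Kato-stability statement alluded to in the abstract — presumably these are the lemmas proved earlier in the paper — to get $u \in L^q$ for a range of $q$ below the Oseen-critical exponent and analogous statements for $b$. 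Second, with sufficient decay in hand, I would test the $u$-equation with $u$ and the $b$-equation with $b$ over $\mathbb{R}^3$ (justified by the decay, so all boundary terms at infinity vanish): the pressure integrates away, the coupling terms $\int B\cdot\nabla B\cdot u$ and $-\int B\cdot\nabla u\cdot b$ combine and cancel after integration by parts using $\operatorname{div}u = \operatorname{div}B = 0$, and crucially the Hall term satisfies $\int \nabla\times((\nabla\times B)\times B)\cdot b = \int ((\nabla\times b)\times B)\cdot(\nabla\times b) = 0$ pointwise, so it contributes nothing to the energy identity. One is left with $\kappa\|\nabla u\|_{L^2}^2 + \nu\|\nabla b\|_{L^2}^2 = 0$, forcing $\nabla u = \nabla b = 0$, hence $u$ and $b$ constant, and the asymptotic conditions pin the constants to $u_\infty$ and $B_\infty$.

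The main obstacle is making the testing argument rigorous: a priori a D-solution need not decay fast enough for the integrations by parts and the vanishing of the far-field boundary terms to be legitimate, and the Hall term is supercritical with respect to the natural $\dot H^1$ scaling, so one cannot simply absorb it. The resolution — and the technical heart of the paper — is the decay estimate coming from the generalized-Oseen $L^p$ theory: because $B_\infty \neq 0$, the linearized magnetic operator is Oseen rather than Stokes, which gives anisotropic but genuinely integrable decay (the Oseen fundamental solution decays like $|x|^{-2}$ away from the wake and faster off-axis), and the condition $\alpha\|\nabla B\|_{L^\infty} < \infty$ keeps the Hall perturbation within the class for which Kato's stability theorem applies, so the full operator inherits the same $L^p$-mapping properties. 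I would therefore spend most of the effort verifying the hypotheses of those earlier lemmas for our $(u,b)$ — in particular checking that the supercritical Hall term, rewritten with the $L^\infty$ bound on $\nabla B$, falls into the admissible perturbation class — and only then carry out the (by comparison routine) energy identity. A secondary technical point is handling the cross/coupling integrals and the pressure term with a cutoff function and sending the cutoff radius to infinity, using the decay rates just obtained to kill the cutoff-error terms; this is standard once the decay is available.
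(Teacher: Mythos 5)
Your overall strategy is the paper's: set $\eta=B-B_\infty$, obtain improved integrability ($L^3$ for the fields, $L^{3/2}$ for the pressure) from $L^q$ estimates for a generalized Oseen system combined with Kato's stability theorem applied on the exterior of a large ball (where the D-condition and the uniform convergence make the perturbation small), use $\|\nabla B\|_{L^\infty}$ only to place the supercritical piece $(\nabla\times B)\cdot\nabla B$ of the Hall term in $L^{6/5}$ by interpolation, and finish with a cutoff energy identity in which the Hall contribution reduces to annulus terms that vanish as $R\to\infty$. All of that matches Steps I--III of the paper's proof.

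There is, however, one concrete gap in your linearization. You claim that $B_\infty\cdot\nabla u=-\partial_{x_1}u$ ``together with $-\nu\Delta b$ gives an Oseen operator acting on $b$.'' It does not: after substituting $B=B_\infty+\eta$, the drift terms are \emph{off-diagonal} --- the velocity equation acquires $\partial_{x_1}\eta$ (from $B\cdot\nabla B$) and the magnetic equation acquires $\partial_{x_1}u$ (from $-B\cdot\nabla u$) --- so neither equation separately is an Oseen equation, and the scalar Oseen $L^p$ theory you want to invoke does not apply componentwise. The paper resolves this by passing to the Els\"asser variables $w_{1,2}=u\pm\eta$, which diagonalize the first-order part into $+\partial_{x_1}w_1$ and $-\partial_{x_1}w_2$ at the cost of introducing the cross-diffusion $\mu_2\Delta$ (with $\mu_1=\tfrac{\kappa+\nu}{2}$, $\mu_2=\tfrac{\kappa-\nu}{2}$) and the Hall curl terms $\alpha'\partial_{x_1}(\nabla\times(w_1-w_2))$; Lemma 3.1 then establishes the $L^q$ estimate for this genuinely coupled system by an explicit Lizorkin multiplier computation, and only afterwards does Kato's theorem handle the variable-coefficient perturbations (including $\eta\cdot\nabla(\nabla\times\cdot)$ with small $L^\infty$ coefficient outside a large ball). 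This diagonalization, together with the bespoke multiplier analysis for the resulting mixed operator, is the missing ingredient your sketch would need before the rest of your argument (which is otherwise sound) can go through. Note also that this is precisely why the case $B_\infty\neq0$ is harder than $u_\infty\neq0$ (Theorem \ref{the1.4}), where the drift terms are already diagonal.
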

\begin{Rem}
	(i) Finn \cite{Fin} and Ladyzhenskaya \cite{Lady} showed that any D-solution of Navier-Stokes equations in a 3D exterior domain converges to a prescribed constant vector $u_{\infty}$ at infinity. It's more difficult in 2D and Korobkov-Pileckas-Russo proved this result in \cite{KPR}. Moreover, the derivatives of the velocity and the pressure also converge to the constant in 3D, for example seeing Theorem X.5.1 in \cite{Galdi}. Hence the condition of $\| \nabla B\|_{L^{\infty}\left(\mathbb{R}^3\right)} < \infty$ seems to be natural.\\
	(ii)The above theorem shows that the Liouville type theorems hold for (\ref{eq:MHD}) without any assumptions on viscosity,  magnetic resistivity or the Hall-coefficient. One main difficulty lies in the Hall term
	$$
	\nabla \times((\nabla \times B) \times B)=B\cdot \nabla (\nabla \times B)-(\nabla \times B)\cdot \nabla B,
	$$
	which shares the same order as the main term of the Laplacian operator. One of our observations is combining the  Laplace operator and $B\cdot \nabla (\nabla \times B)$ as the main terms, and the other term of $(\nabla \times B)\cdot \nabla B$ as the perturbation term.
\end{Rem}

When the Hall-coefficient  $\alpha=0$, we immediately obtain the following corollary.
\begin{Cor}
	Let $(u, B)$ be a smooth solution to the MHD system with $\alpha = 0$ in (\ref{eq:MHD}) satisfying  the D-condition \eqref{eq:d-condition} and
	$$
	u(x) \rightarrow u_{\infty}, \quad B(x) \rightarrow B_{\infty}, \quad \text { uniformly as } \quad|x| \rightarrow \infty,
	$$
	where $u_{\infty}, B_{\infty} \in \mathbb{R}^3$ are two constant vectors. Then $(u, B) \equiv\left(u_{\infty}, B_{\infty}\right)$ follows if $u_{\infty} = 0, B_{\infty} \neq 0 $.
\end{Cor}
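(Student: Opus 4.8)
The plan is to deduce the corollary from Theorem \ref{the1.1} by using the scaling and rotational symmetries of the stationary MHD system. Since here $\alpha=0$, the structural hypothesis $\alpha\|\nabla B\|_{L^\infty(\mathbb{R}^3)}<\infty$ appearing in Theorem \ref{the1.1} is trivially satisfied, so the only thing that prevents a direct application is that Theorem \ref{the1.1} is stated for the specific constant vector $B_\infty=(-1,0,0)$, whereas here $B_\infty$ is an arbitrary nonzero vector. Both the magnitude and the direction of $B_\infty$ will be normalized by admissible changes of variables.

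First I would rescale: for $\lambda=|B_\infty|^{-1}>0$ set
\[
(u_\lambda,B_\lambda,p_\lambda)(x):=\bigl(\lambda\,u(\lambda x),\ \lambda\,B(\lambda x),\ \lambda^2 p(\lambda x)\bigr).
\]
A direct computation shows that $(u_\lambda,B_\lambda,p_\lambda)$ solves \eqref{eq:MHD} with $\alpha=0$ and with the \emph{same} coefficients $\kappa,\nu$, because every term in the momentum and induction equations picks up the common factor $\lambda^3$, while $\operatorname{div}u_\lambda=\operatorname{div}B_\lambda=0$ is preserved. Moreover $\nabla u_\lambda(x)=\lambda^2(\nabla u)(\lambda x)$, so after the change of variables $y=\lambda x$ the Dirichlet integral only changes by the factor $\lambda$ and thus stays finite; likewise the uniform limits at infinity transform into $u_\lambda(x)\to\lambda u_\infty=0$ and $B_\lambda(x)\to\lambda B_\infty$, which is now a unit vector.

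Second, choose $R\in \mathrm{SO}(3)$ with $R(\lambda B_\infty)=(-1,0,0)$ and put $(\widetilde u,\widetilde B,\widetilde p)(x):=\bigl(R\,u_\lambda(R^{\mathsf T}x),\ R\,B_\lambda(R^{\mathsf T}x),\ p_\lambda(R^{\mathsf T}x)\bigr)$. The MHD system is covariant under such orthogonal changes of variables: the Laplacian, the convective terms $v\cdot\nabla w$, the pressure gradient, and the divergence all transform equivariantly, so $(\widetilde u,\widetilde B,\widetilde p)$ is again a smooth solution satisfying the D-condition with $\widetilde u\to 0$ and $\widetilde B\to(-1,0,0)$ uniformly at infinity. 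Theorem \ref{the1.1} (with $\alpha=0$) then forces $(\widetilde u,\widetilde B)\equiv(0,(-1,0,0))$, and unwinding the rotation and the scaling yields $(u,B)\equiv(u_\infty,B_\infty)$.

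I do not expect a genuine obstacle here; the argument is essentially bookkeeping about invariances. The one point that must be checked with care is the choice of scaling exponents in $(u_\lambda,B_\lambda,p_\lambda)$: it is precisely the self-similar exponents $(\lambda,\lambda,\lambda^2)$ that leave $\kappa$ and $\nu$ unchanged, so that Theorem \ref{the1.1} can be invoked verbatim rather than for a system with rescaled coefficients. One should also note that $u_\infty=0$ is essential for the reduction, since the rescaling sends $u_\infty\mapsto\lambda u_\infty$ and would otherwise alter the prescribed limit.
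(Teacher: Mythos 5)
Your argument is correct and takes essentially the same route as the paper: the corollary is deduced directly from Theorem \ref{the1.1} with $\alpha=0$ (so the hypothesis $\alpha\|\nabla B\|_{L^\infty}<\infty$ is vacuous) after normalizing $B_\infty$ to $(-1,0,0)$ by the symmetries of the system. You are in fact a bit more careful than the paper's accompanying remark, which cites only orthogonal invariance: the scaling $(u,B,p)\mapsto(\lambda u(\lambda\,\cdot),\lambda B(\lambda\,\cdot),\lambda^2 p(\lambda\,\cdot))$ is genuinely needed to normalize $|B_\infty|$, and you correctly check that it preserves $\kappa$, $\nu$, the D-condition, and the limit $u_\infty=0$.
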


\begin{Rem}
	The above result improved (ii) of Theorem 1.1  in \cite{LP}, where the velocity viscosity is equal to the magnetic resistivity, i.e.  $\kappa=\nu$. One can assume $B_{\infty}=(-1,0,0)$, since the MHD system  is invariant under the orthogonal coordinate transform.
\end{Rem}

Moreover, when $u(x) \rightarrow u_{\infty}\neq 0$, we have the following conclusion.
\begin{Thm}
	\label{the1.4}
	Let $(u, B)$ be a smooth solution to the Hall-MHD equations (\ref{eq:MHD}) satisfying the D-condition \eqref{eq:d-condition} 
	and
	$$
	u(x) \rightarrow u_{\infty}, \quad B(x) \rightarrow B_{\infty}, \quad \text { uniformly as } \quad|x| \rightarrow \infty,
	$$
	where $u_{\infty} = (1, 0, 0)$ and $B_{\infty} = 0$. Then $(u, B) \equiv\left(u_{\infty}, B_{\infty}\right)$ follows if  $\alpha \| \nabla B\|_{L^{\infty}\left(\mathbb{R}^3\right)} < \infty$.
\end{Thm}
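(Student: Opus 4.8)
The plan is to follow the same strategy as for Theorem \ref{the1.1}, but now perturbing around $(u_\infty, B_\infty) = ((1,0,0), 0)$, where the velocity equation becomes an Oseen-type system and the magnetic equation becomes a linear-transport-perturbed heat equation. First I would set $v = u - u_\infty$, so that $v, B \to 0$ at infinity and the D-condition holds for $v$ and $B$. The velocity equation rewrites as
\[
-\kappa \Delta v + \p_1 v + \nabla p = -v\cdot\nabla v + B\cdot\nabla B,
\]
which is the classical Oseen system with a quadratic right-hand side, while the magnetic equation becomes
\[
-\nu \Delta B + \p_1 B - B\cdot\nabla v + v\cdot\nabla B - B\cdot\nabla u_\infty = \alpha\,\nabla\times((\nabla\times B)\times B),
\]
and since $u_\infty$ is constant the term $B\cdot\nabla u_\infty$ vanishes. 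Crucially, $B_\infty = 0$, so this is genuinely a generalized Oseen system for $B$ with zero drift coefficient in the principal part; the Hall term $\alpha(B\cdot\nabla(\nabla\times B) - (\nabla\times B)\cdot\nabla B)$ is again handled by absorbing $B\cdot\nabla(\nabla\times B)$ into the principal operator and treating $(\nabla\times B)\cdot\nabla B$ as a lower-order perturbation whose size is controlled by $\alpha\|\nabla B\|_{L^\infty}$, exactly as in the Remark following Theorem \ref{the1.1}.

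Next I would run the bootstrap. From the D-condition and $v, B \to 0$, standard Sobolev/interpolation arguments (as in Galdi's treatment of the Oseen problem, and as used for Theorem \ref{the1.1}) give $v, B \in L^q$ for a range of $q$, together with decay estimates $|v|, |B| = o(|x|^{-\gamma})$ for a suitable $\gamma > 0$. The key technical input is the $L^p$ theory for the generalized Oseen operator with the supercritical Hall perturbation, established earlier in the paper via Lizorkin's multiplier theorem and Kato's stability theorem: I would apply it to both the $v$-equation and the $B$-equation to upgrade the integrability and decay of $\nabla v$, $\nabla^2 v$, $\nabla B$, $\nabla^2 B$, and the pressure gradient. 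Iterating, one obtains enough decay that the nonlinear terms $v\cdot\nabla v$, $B\cdot\nabla B$, $B\cdot\nabla v$, $v\cdot\nabla B$, and the Hall term all lie in good weighted or Lebesgue spaces. Then a Liouville-type conclusion for the linear Oseen system — e.g., any solution with the established decay and finite Dirichlet integral must be constant, hence zero — forces $v \equiv 0$ and $B \equiv 0$, i.e. $(u, B) \equiv (u_\infty, 0)$.

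The main obstacle, as in Theorem \ref{the1.1}, is the Hall term: it is of the same differential order as the leading Laplacian, so it cannot be treated as a genuine perturbation of the $L^p$ estimates unless one first reorganizes the principal part to include $B\cdot\nabla(\nabla\times B)$. The point where Theorem \ref{the1.4} is actually \emph{easier} than Theorem \ref{the1.1} is that here $B_\infty = 0$, so the magnetic equation has no constant drift in its principal symbol and the relevant Oseen operator for $B$ degenerates to (a perturbation of) the Laplacian; the delicate anisotropic Oseen decay analysis is only needed for the velocity $v$. Conversely, one must be careful that the coupling term $B\cdot\nabla v$ in the magnetic equation and $B\cdot\nabla B$ in the velocity equation do not destroy the bootstrap — these are quadratic in the decaying quantities, so provided the first step of the iteration yields any algebraic decay, the feedback closes. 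The condition $\alpha\|\nabla B\|_{L^\infty} < \infty$ enters exactly where Kato's stability theorem is invoked, guaranteeing that the perturbed operator remains invertible on the relevant $L^p$ scale.
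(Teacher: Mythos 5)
Your overall architecture matches the paper's: shift $v=u-u_\infty$, absorb the quasilinear half of the Hall term $B\cdot\nabla(\nabla\times B)$ into the principal operator with smallness coming from $B\to 0$ outside a large ball (via Kato's stability theorem), treat $(\nabla\times B)\cdot\nabla B$ as a source, apply the Lizorkin-based $L^q$ theory for the generalized Oseen system with $q=6/5$ to upgrade $v,B$ from $L^6$ to $L^3$ (and $p$ to $L^{3/2}$), and conclude. Two points in your write-up, however, would derail the argument as stated.

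First, the claim that because $B_\infty=0$ ``the magnetic equation has no constant drift in its principal symbol and the relevant Oseen operator for $B$ degenerates to (a perturbation of) the Laplacian'' is incorrect, and the error is not cosmetic. The drift $\partial_{x_1}B$ in the magnetic equation comes from $u_\infty\cdot\nabla B$ with $u_\infty=(1,0,0)$, not from $B_\infty$; you even wrote it in your displayed equation. It cannot be moved to the right-hand side (it is only in $L^2$, not $L^{6/5}$), and it is precisely what makes the multiplier estimate from $L^{6/5}$ to $L^3$ work: the symbol $(\nu|\xi|^2+i\xi_1)^{-1}$ satisfies the Lizorkin bound $|\xi_1|^{1/2}|\xi_2|^{1/2}|\xi_3|^{1/2}\,|\Phi|\le C$ only because of the $i\xi_1$ term, whereas the pure Laplacian symbol $|\xi|^{-2}$ violates it near $\xi=0$ (take $\xi_1=\xi_2=\xi_3=t\to0$). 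So the anisotropic Oseen analysis is needed for $B$ exactly as for $v$; the genuine simplification relative to Theorem \ref{the1.1} is only that the linear parts of the $v$- and $B$-equations decouple, so no Els\"asser change of variables and no mixed Oseen system are needed.

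Second, you locate the hypothesis $\alpha\|\nabla B\|_{L^\infty}<\infty$ in the Kato step, but the smallness needed there is $\|B\|_{L^{\infty}(B_M^c)}$ (free, since $B\to0$ uniformly) together with the tails of the Dirichlet energy. The gradient bound is used elsewhere: to place the quadratic piece $(\nabla\times B)\cdot\nabla B$ of the Hall term in $L^{6/5}$ via $\|(\nabla\times B)\cdot\nabla B\|_{L^{6/5}}\lesssim\|\nabla B\|_{L^\infty}^{1/3}\|\nabla B\|_{L^2}^{5/3}$. Relatedly, your closing step (iterated weighted decay estimates plus a Liouville theorem for the linear Oseen system) is both vaguer and heavier than necessary: once $v,B\in L^3$ and $p\in L^{3/2}$ are in hand after a \emph{single} application of the perturbed Oseen estimate, testing the equations with $v\varphi_R$ and $B\varphi_R$ gives a Caccioppoli-type identity whose cutoff terms all vanish as $R\to\infty$, yielding $\int_{\mathbb{R}^3}(\kappa|\nabla v|^2+\nu|\nabla B|^2)\,dx=0$ and hence $v=B\equiv0$ by the $L^3$ membership. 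As written, your final inference is not justified; this energy identity is the missing mechanism.
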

\begin{Rem} Especially, when $\alpha=0$, the above theorem implies the result (i) of Theorem 1.1 in \cite{LP}. To deal with Hall-term, it seems that $\| \nabla B\|_{L^{\infty}\left(\mathbb{R}^3\right)} < \infty$ is necessary. In fact, one can replace $\| \nabla B\|_{L^{\infty}\left(\mathbb{R}^3\right)} < \infty$ by $\| \nabla  B\|_{L^{\frac{12}{5}}\left(\mathbb{R}^3\right)} < \infty$ by careful computation. Then, we have $\|(\nabla \times B)\cdot\nabla B\|_{L^{\frac{6}{5}}(\mathbb{R}^3)} <\infty$. See (\ref{equ3.5}) and (\ref{equ3.9}) for more details. It's interesting whether the condition of $ \| \nabla B\|_{L^{\infty}\left(\mathbb{R}^3\right)} < \infty$ or $\| \nabla  B\|_{L^{{\frac{12}{5}}}\left(\mathbb{R}^3\right)} < \infty$ can be removed if $\alpha\neq0$.
\end{Rem}
Furthermore, we consider the Liouville type theorems for the following stationary MHD system in $\mathbb{R}^3$ with degenerate viscosity or resistivity in different directions: 
\begin{equation}
	\label{equ1.3}
	\left\{\begin{array}{l}
		-\Delta_\kappa  u+u \cdot \nabla u+\nabla p=B \cdot \nabla B, \\
		-\Delta_\nu B+u \cdot \nabla B-B \cdot \nabla u= 0,\qquad  \\
		\operatorname{div} u=0, \quad \operatorname{div} B=0,
	\end{array}\right.
\end{equation}
where \ben\label{eq:laplace k}\Delta_\kappa:=\kappa_1\partial_{x_1}^2 + \kappa_2\partial_{x_2}^2 + \kappa_3\partial_{x_3}^2,\quad \Delta_\nu:=\nu_1\partial_{x_1}^2 + \nu_2\partial_{x_2}^2 + \nu_3\partial_{x_3}^2\een
with  $\kappa_1, \nu_1\geq 0$
and $\kappa_2,\kappa_3,\nu_2,\nu_3>0$.
\begin{Thm}
	\label{the1.6}
	Let $(u, B)$ be a smooth solution to the MHD equations (\ref{equ1.3})-\eqref{eq:laplace k} satisfying  the D-condition \eqref{eq:d-condition} 
	and
	$$
	u(x) \rightarrow u_{\infty}, \quad B(x) \rightarrow B_{\infty}, \quad \text { uniformly as } \quad|x| \rightarrow \infty,
	$$
	where $u_{\infty}, B_{\infty} \in \mathbb{R}^3$ are two constant vectors. Then $(u, B) \equiv\left(u_{\infty}, B_{\infty}\right)$ follows if one of the following conditions holds:\\
	(i) $u_{\infty}=(1,0,0), B_{\infty} = 0 $ with $\kappa_1, \nu_1\geq 0$;
	\\
	(ii) $u_{\infty}=0, B_{\infty} =  (-1,0,0)$ with $\kappa_1\nu_1>0$ or $\kappa_1=\nu_1=0.$
\end{Thm}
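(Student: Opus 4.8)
The plan is to follow the scheme already used for Theorems \ref{the1.1} and \ref{the1.4}: reduce to a perturbed generalized (anisotropic) Oseen system, bootstrap decay via $L^p$ multiplier estimates, and then close with an energy identity whose structure annihilates everything except the ``diagonal'' Dirichlet terms. Set $v:=u-u_\infty$, $H:=B-B_\infty$, so that $v,H\to0$ uniformly at infinity and, by \eqref{eq:d-condition} and the Sobolev inequality, $v,H\in L^6(\mathbb{R}^3)$ with $\nabla v,\nabla H\in L^2(\mathbb{R}^3)$. Substituting into \eqref{equ1.3}: in case (i) one gets the \emph{decoupled} pair
\begin{align*}
-\Delta_\kappa v+\partial_{x_1}v+\nabla p&=H\cdot\nabla H-v\cdot\nabla v,\qquad \operatorname{div}v=0,\\
-\Delta_\nu H+\partial_{x_1}H&=H\cdot\nabla v-v\cdot\nabla H,\qquad\ \ \operatorname{div}H=0,
\end{align*}
an anisotropic Oseen problem (with possibly degenerate $x_1$–diffusion, $\kappa_1,\nu_1\ge0$) forced by terms quadratic in $(v,H,\nabla v,\nabla H)$; in case (ii) one gets instead the \emph{coupled} anisotropic system in which $v$ and $H$ interact through the first-order terms $\partial_{x_1}H$ and $\partial_{x_1}v$,
\begin{align*}
-\Delta_\kappa v+\nabla p+\partial_{x_1}H&=H\cdot\nabla H-v\cdot\nabla v,\qquad \operatorname{div}v=0,\\
-\Delta_\nu H-\partial_{x_1}v&=H\cdot\nabla v-v\cdot\nabla H,\qquad\ \ \operatorname{div}H=0.
\end{align*}

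Next I would establish $L^p$ estimates for these linear operators. Apply the Leray projection to remove $p$, and in case (ii) also invert the $2\times2$ first-order block — the relevant scalar symbol involves $\big(\sum_j\kappa_j\xi_j^2\big)\big(\sum_j\nu_j\xi_j^2\big)$ together with $\xi_1^2$. One checks that the resulting Fourier multipliers lie in the Lizorkin class: this uses $\kappa_2,\kappa_3,\nu_2,\nu_3>0$ throughout, and in case (ii) precisely the dichotomy $\kappa_1\nu_1>0$ or $\kappa_1=\nu_1=0$, which is exactly what gives the inverted symbol a coherent (mixed-)homogeneity — the mixed case $\kappa_1>0=\nu_1$ would destroy it. This yields estimates of the form $\|\nabla^2v\|_{L^p}+\|\nabla p\|_{L^p}+\|\nabla^2H\|_{L^p}\lesssim\|f\|_{L^p}$ (plus lower-order, wake-type gains from the Oseen drift). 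Starting from $v,H\in L^6$, $\nabla v,\nabla H\in L^2$, the right-hand sides lie in $L^{3/2}$; the linear estimate then upgrades $\nabla v,\nabla H$ to $L^3$ and, with the explicit decay of the (anisotropic Oseen) kernels, gives $v,H\in L^3\cap L^\infty$, $\nabla v,\nabla H\in L^2\cap L^3$, and correspondingly $p\in L^{3/2}$ with suitable decay; iterating improves this further.

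With enough decay in hand I would run the energy identity. Fix $\chi_R\in C_c^\infty(B_{2R})$, $\chi_R\equiv1$ on $B_R$, $|\nabla\chi_R|\lesssim R^{-1}$, test the $v$-equation against $\chi_R v$ and the $H$-equation against $\chi_R H$, and integrate by parts using $\operatorname{div}v=\operatorname{div}H=0$. The convective terms, the drift terms $\int\chi_R(\partial_{x_1}v)\cdot v$ etc., and the pressure term reduce to commutators against $\nabla\chi_R$, all bounded by $R^{-1}\int_{B_{2R}\setminus B_R}\big(|v|^3+|H|^3+|p||v|+|v||\nabla v|+|H||\nabla H|\big)$, which tend to $0$ by the previous step; the Lorentz-force term $\int\chi_R(H\cdot\nabla H)\cdot v$ cancels against $-\int\chi_R(H\cdot\nabla v)\cdot H$ coming from the $H$-equation, and in case (ii) the remaining linear terms combine into $-\int\chi_R\,\partial_{x_1}(v\cdot H)$, again a harmless commutator. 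Letting $R\to\infty$ leaves exactly
\[
\sum_{i=1}^3\kappa_i\int_{\mathbb{R}^3}|\partial_i v|^2\,dx+\sum_{i=1}^3\nu_i\int_{\mathbb{R}^3}|\partial_i H|^2\,dx=0.
\]
Since $\kappa_2,\kappa_3,\nu_2,\nu_3>0$, this forces $\partial_2v=\partial_3v=\partial_2H=\partial_3H\equiv0$, so $v$ and $H$ depend on $x_1$ only; letting $x_2\to\infty$ with $x_1$ fixed and using the uniform decay $v,H\to0$ at infinity yields $v\equiv H\equiv0$, i.e. $(u,B)\equiv(u_\infty,B_\infty)$.

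The main obstacle is the linear step: the diffusion symbol $\sum_j\kappa_j\xi_j^2$ is only comparable to $\kappa_2\xi_2^2+\kappa_3\xi_3^2$ when $\kappa_1=0$, so the classical (nondegenerate) Oseen theory does not apply verbatim, and one must carry the multiplier/bootstrap analysis through for the genuinely anisotropic — and, in case (ii), coupled — operator; this is also exactly where the hypothesis $\kappa_1\nu_1>0$ or $\kappa_1=\nu_1=0$ is genuinely used. A secondary technical point is justifying the $R\to\infty$ limits, for which one needs the decay of the pressure as well as of $v,H,\nabla v,\nabla H$ produced by the bootstrap.
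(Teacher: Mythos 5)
Your overall architecture coincides with the paper's: translate by the constant states, obtain $L^q\to L^r$ estimates for the (possibly degenerate, and in case (ii) coupled) Oseen operator via Lizorkin's multiplier theorem --- the determinant symbol $\bigl(\sum_j\kappa_j\xi_j^2\bigr)\bigl(\sum_j\nu_j\xi_j^2\bigr)+\xi_1^2$ you describe is exactly the one the paper reaches after passing to the Els\"asser variables $w_{1,2}=u\pm\eta$, and your diagnosis of why the dichotomy $\kappa_1\nu_1>0$ or $\kappa_1=\nu_1=0$ is needed there is correct --- then upgrade to $L^3$ and close with the cut-off energy identity. The final step (two of the three directional derivatives vanish, then use uniform decay at infinity) is also sound.

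However, the bootstrap step as you describe it does not close, and this is where the paper's proof contains an idea your proposal is missing. With only $v,H\in L^6$ and $\nabla v,\nabla H\in L^2$, the quadratic terms lie in $L^{3/2}$, and the Lizorkin gain for the Oseen-type symbols is exactly $\beta=\tfrac12$ (one checks that $\beta=\tfrac13$ fails: along $\xi_1\sim\epsilon^2$, $\xi_2,\xi_3\sim\epsilon$ the quantity $|\xi_1|^{1/3}|\xi_2|^{1/3}|\xi_3|^{1/3}\,\bigl|\,|\xi|^2+i\xi_1\bigr|^{-1}$ blows up like $\epsilon^{-2/3}$). So an $L^{3/2}$ source returns $v\in L^6$ --- no gain --- and the scheme stalls; the appeal to ``explicit decay of the anisotropic Oseen kernels'' does not rescue this, least of all for the degenerate operator where no such pointwise kernel estimates are available off the shelf. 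Yet $v,H\in L^3(\mathbb{R}^3)$ is indispensable: the commutator terms $R^{-1}\int_{B_{2R}\setminus B_R}|v|^2$ and $\int_{B_{2R}\setminus B_R}|v||H|$ in the energy identity are merely $O(1)$ under $L^6$ alone. The paper's device is to multiply the equations by a cut-off $\psi$ vanishing on a large ball $B_M$ and to absorb the convective terms $(\nabla v)^T\chi_{B_M^c}\cdot(\psi v)$ into the \emph{operator} as a relatively bounded perturbation $\mathcal M$ with $\|\mathcal M\|_{L^2}$ small (since $\|\nabla v\|_{L^2(B_M^c)}\to0$ as $M\to\infty$), invoking Kato's stability theorem; the genuine right-hand sides then involve only derivatives of $\psi$ and are supported in an annulus, hence lie in $L^{6/5}$, and $q=\tfrac65$ gives exactly $r=3$. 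Without this (or an equivalent smallness/absorption mechanism) your $L^3$ claim is unsupported. A minor further point: in your case (ii) system the term should read $+\partial_{x_1}v$ in the $H$-equation (since $B_\infty\cdot\nabla v=-\partial_{x_1}v$ enters with a minus sign); with your displayed sign the linear terms would not combine into the total derivative $\partial_{x_1}(v\cdot H)$ that your energy argument relies on.
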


\begin{Rem}Note that when $\kappa_1 + \kappa_2 + \kappa_3 > 0$ and $ \nu_1 + \nu_2 + \nu_3 > 0$, it follows that $u=B\equiv0$ if 
	$B \in L^6(\mathbb{R}^3)$ and $u \in L^3(\mathbb{R}^3)$ (see Theorem 1.5, \cite{CLW}). Here it seems difficult to prove the Liouville theorem by only assuming that one of the second-order derivatives does not vanish. When all $\kappa_i=\nu_j=0$ for $i,j=1,2,3$, it is similar to the three-dimensional Euler equations. It is impossible to deduce that $u$ is vanishing from the  bounded-ness of $L^q$ norm. For example, we can refer to the counterexample belonging to $C_0^\infty(\mathbb{R}^3)$ in \cite{Ga}. 
\end{Rem}

This paper is organized as follows: some notations and some necessary lemmas are presented in Sect.\ref{sec2}; The Liouville type theorem on the Hall-MHD system with $B_\infty\neq0$ is obtained in Sect.\ref{sec3}, where we give the detailed proof of Theorem \ref{the1.1}; Theorem \ref{the1.4} is proved in  Sect.\ref{sec4}  on the Hall-MHD system with $u_\infty\neq0$; Sect.\ref{sec5} is devoted to the proof of Theorem \ref{the1.6} on the degenerate MHD system.

Throughout this paper, $C\left(c_1, c_2, \ldots, c_n\right)$ denotes a positive constant depending on $c_1, c_2, \ldots c_n$ which may be different from line to line. We denote $A' \leq C B'$ by $A' \lesssim B'$.

\section{Preliminaries}
\label{sec2}
First we introduce some notations. Denote by $B_r\left(x_0\right):=\left\{x \in \mathbb{R}^3:\left|x-x_0\right|<r\right\}$ and $B_r:=B_r(0)$. Denote by $\nabla^\gamma:=\partial_{x_1}^{\gamma_1} \partial_{x_2}^{\gamma_2} \partial_{x_3}^{\gamma_3}$, where  $\gamma=\left(\gamma_1, \gamma_2, \gamma_3\right)$, $\gamma_1, \gamma_2, \gamma_3 \in \mathbb{N} \cup\{0\}$, $\partial_i = \frac{\partial}{\partial x_i}$ and $|\gamma|=\gamma_1+\gamma_2+\gamma_3$. We denote $L^p(\Omega)$ by the usual Lebesgue space with the norm
$$
\|f\|_{L^p(\Omega)}:= \begin{cases}\left(\int_{\Omega}|f(x)|^p d x\right)^{1 / p}, & 1 \leq p<\infty, \\ \underset{x \in \Omega}{\operatorname{ess\, sup}}|f(x)|, & p=\infty,\end{cases}
$$
where $\Omega \subset \mathbb{R}^3, 1 \leq p \leq \infty$. $W^{k, p}(\Omega)$  and $\dot{W}^{k, p}(\Omega)$ are defined as follows:
$$
\begin{aligned}
	\|f\|_{W^{k, p}(\Omega)} & :=\sum_{0 \leq|\gamma| \leq k}\left\|\nabla^\gamma f\right\|_{L^p(\Omega)}, \\
	\|f\|_{\dot{W}^{k, p}(\Omega)} & :=\sum_{|\gamma|=k}\left\|\nabla^\gamma f\right\|_{L^p(\Omega)},
\end{aligned}
$$
respectively. $C^{\infty}(\Omega)$ denotes the space of smooth functions on $\Omega$. $\mathcal{S}\left(\mathbb{R}^n\right)$ denotes the space of rapid decreasing smooth functions on $\mathbb{R}^n$. $\mathcal{P}_n$ stands for the space of polynomials in $\mathbb{R}^3$ with their degree no bigger than $n$.

We introduce the following multiplier theorem by Lizorkin (see \cite{Liz1963}, \cite{Liz1967}, or Section VII.4 in \cite{Galdi}), which plays an important role in the proof of the main theorems.
\begin{Thm}[Lizorkin]\label{thm:lizorkin}
	Let $\Phi: \mathbb{R}^n \rightarrow \mathbb{C}$ be continuous together with the derivative
	$$
	\frac{\partial^n \Phi}{\partial \xi_1 \ldots \partial \xi_n}
	$$
	and all preceding derivatives for $\left|\xi_i\right|>0, i=1, \ldots, n$. Then, if for some $\beta \in[0,1)$ and $M>0$
	\begin{equation}
		\label{eq:bound of phi}
		\left|\xi_1\right|^{\kappa_1+\beta} \cdot \ldots \cdot\left|\xi_n\right|^{\kappa_n+\beta}\left|\frac{\partial^\kappa \Phi}{\partial \xi_1^{\kappa_1} \ldots \partial \xi_n^{\kappa_n}}\right| \leq M,
	\end{equation}
	where $\kappa_i$ is zero or one and $\kappa=\sum_{i=1}^n \kappa_i=0,1, \ldots n$, the integral transform
	$$
	T u =\frac{1}{(2 \pi)^{n / 2}} \int_{\mathbb{R}^n} e^{i \boldsymbol{x} \cdot \boldsymbol{\xi}} \Phi(\xi) \widehat{u}(\xi) d \xi, \quad u \in \mathcal{S}\left(\mathbb{R}^n\right)
	$$
	defines a bounded linear operator from $L^q\left(\mathbb{R}^n\right)$ into $L^r\left(\mathbb{R}^n\right), 1<q<$ $\infty, 1 / r=1 / q-\beta$, and we have
	$$
	\|T u\|_{L^r\left(\mathbb{R}^n\right)} \leq C (q, \beta, M)\|u\|_{L^q\left(\mathbb{R}^n\right)}.
	$$
\end{Thm}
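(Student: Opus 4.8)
\emph{Strategy.} The plan is to separate the ``gain of integrability'' from the ``boundedness'' by factoring the symbol. I would write
\[
\Phi(\xi)=\Psi(\xi)\,R(\xi),\qquad R(\xi):=\prod_{i=1}^n|\xi_i|^{-\beta},\quad \Psi(\xi):=\Phi(\xi)\prod_{i=1}^n|\xi_i|^{\beta},
\]
so that on the Fourier side $T=T_\Psi\circ T_R$, where $T_R$ carries the whole gain $\beta$ while $T_\Psi$ is a zero-gain ($L^r$-bounded) multiplier. Since a bounded Fourier multiplier composed with the multiplier $R$ is again the multiplier $\Phi$, it suffices to prove (a) $T_R\colon L^q(\mathbb R^n)\to L^r(\mathbb R^n)$ with $1/r=1/q-\beta$, and (b) $T_\Psi\colon L^r(\mathbb R^n)\to L^r(\mathbb R^n)$, and then compose the two estimates. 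Note that the $\kappa=0$ case of \eqref{eq:bound of phi} gives $\Phi\lesssim\prod_i|\xi_i|^{-\beta}$, which is locally integrable because $\beta<1$; hence for $u\in\mathcal S(\mathbb R^n)$ the defining integral converges, $\widehat{Tu}=\Phi\,\widehat u$, and the composition is legitimate after extending $T_\Psi$ to $L^r$ by density. Here $r\ge q>1$ and $r<\infty$ since $1/q>\beta$.

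\emph{Step 1: the gain, estimate (a).} The symbol $R$ is a tensor product of one-dimensional Riesz symbols, so $T_R=I^{(1)}_\beta\cdots I^{(n)}_\beta$ is the composition of the one-dimensional fractional integrations $I^{(i)}_\beta$ acting in the variable $x_i$. I would apply the one-dimensional Hardy--Littlewood--Sobolev inequality one variable at a time, processing from the innermost variable outward: $I^{(n)}_\beta\colon L^q_{x_n}\to L^r_{x_n}$ with $1/r=1/q-\beta$, and at each subsequent variable the \emph{vector-valued} HLS inequality (valid because $I_\beta$ is a positive operator, hence controlled by Minkowski's integral inequality with arbitrary Banach target) upgrades the current $x_i$-exponent from $q$ to $r$, leaving the already-processed inner exponents equal to $r$ and the outer ones at $q$ until their turn. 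Although $I_\beta$ gains $\beta$ in \emph{every} variable, these are distinct exponents of an iterated (mixed) norm, so the terminal space is $L^r_{x_1}\cdots L^r_{x_n}=L^r(\mathbb R^n)$ with a single total gain $\beta$, as required.

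\emph{Step 2: the zero-gain multiplier, estimate (b).} First I would verify that $\Psi$ satisfies \eqref{eq:bound of phi} with $\beta=0$. By the Leibniz rule $\partial^\kappa\Psi=\sum_{\kappa'\le\kappa}\binom{\kappa}{\kappa'}(\partial^{\kappa'}\Phi)\,\partial^{\kappa-\kappa'}\!\prod_i|\xi_i|^{\beta}$, and since $\partial^{\kappa-\kappa'}\prod_i|\xi_i|^\beta\sim\prod_i|\xi_i|^{\beta-(\kappa_i-\kappa'_i)}$, multiplying a single term by $\prod_i|\xi_i|^{\kappa_i}$ produces exactly $\prod_i|\xi_i|^{\kappa'_i+\beta}\,|\partial^{\kappa'}\Phi|\le M$; as there are only finitely many ($2^{|\kappa|}$) terms, $\prod_i|\xi_i|^{\kappa_i}|\partial^\kappa\Psi|\le C(n,\beta)M$ for every $\kappa\in\{0,1\}^n$. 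These pointwise bounds are precisely the Marcinkiewicz mixed-derivative conditions: $|\partial_S\Psi|\le C\prod_{i\in S}|\xi_i|^{-1}$ integrates to a constant over every dyadic rectangle in the variables $S\subseteq\{1,\dots,n\}$, each factor contributing $\int_{2^k}^{2^{k+1}}d\xi_i/\xi_i=\log 2$. The $n$-dimensional Marcinkiewicz multiplier theorem then yields $T_\Psi\colon L^r\to L^r$ for $1<r<\infty$, and composing with Step 1 finishes the proof.

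\emph{Main obstacle.} The analytic heart is estimate (b) in its sharp, product form. Because the hypotheses control only the \emph{mixed} (Marcinkiewicz-type) derivatives $\partial_S\Phi$ rather than the full gradient, one cannot invoke Calder\'on--Zygmund or Mikhlin--H\"ormander singular-integral theory directly; the right tool is \emph{product} Littlewood--Paley theory---a Fourier decomposition into dyadic rectangles together with iterated square-function estimates in each coordinate---which is exactly how the Marcinkiewicz theorem is proved and where the anisotropic singularities of $\Phi$ on the coordinate hyperplanes $\{\xi_i=0\}$ must be absorbed. The remaining work is bookkeeping: justifying the factorization $T=T_\Psi T_R$ and the attendant density/limiting argument, given that $\Phi$ is unbounded near those hyperplanes though still locally integrable.
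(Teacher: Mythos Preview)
The paper does not prove this theorem; it merely quotes it from Lizorkin's original papers \cite{Liz1963,Liz1967} and Galdi's monograph \cite{Galdi}, so there is no in-paper argument to compare against. Your proposal is therefore an independent proof of a cited result, and the outline you give is essentially the classical one: factor $\Phi=\Psi\cdot R$ with $R=\prod_i|\xi_i|^{-\beta}$, handle the gain $L^q\to L^r$ by iterating the one-dimensional Hardy--Littlewood--Sobolev inequality coordinatewise (the positivity of the fractional-integration kernel makes the vector-valued step immediate via Minkowski), and then verify that $\Psi=\Phi\prod_i|\xi_i|^{\beta}$ satisfies the Marcinkiewicz mixed-derivative conditions so that $T_\Psi$ is bounded on $L^r$. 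Your Leibniz computation in Step~2 is correct: each term in $\prod_i|\xi_i|^{\kappa_i}\,\partial^\kappa\Psi$ reduces exactly to $\prod_i|\xi_i|^{\kappa_i'+\beta}\,|\partial^{\kappa'}\Phi|\le M$, and the iterated HLS in Step~1 does land in $L^r(\mathbb R^n)$ because the exponents are upgraded variable by variable in a mixed norm, not cumulatively.

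The only point to tighten is the density/limiting passage you flag at the end: since $\Phi$ may blow up on the coordinate hyperplanes, $T_\Psi$ and $T_R$ are initially defined only on Schwartz functions whose Fourier transforms vanish near $\{\xi_i=0\}$, and one must check that the composition extends to all of $\mathcal S$ (or argue directly that $\Phi\widehat u\in L^1_{\mathrm{loc}}$ defines a tempered distribution for $u\in\mathcal S$, which your local-integrability remark already covers). This is routine and your awareness of it is enough for a sketch.
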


In order to deal with the perturbation system of the generalized Oseen system, we introduce the definition of the relative bounded-ness and Kato's stability theorem of bounded invertibility \cite{Kato}. 
\begin{Def}
	Let $\mathcal{T}$ and $\mathcal{A}$ be operators with the same domain space $\mathcal{X}$ (but not necessarily with the same range space) such that $\mathrm{D}(\mathcal{T}) \subset \mathrm{D}(\mathcal{A})$ and
	\begin{equation}
		\label{def2.13}
		\|\mathcal{A} u\| \leq a\|u\|+b\|\mathcal{T} u\|, \quad u \in \mathrm{D}(\mathcal{T}),
	\end{equation}
	where $a, b$ are nonnegative constants. Then we shall say that $\mathcal{A}$ is relatively bounded with respect to $\mathcal{T}$ or simply $\mathcal{T}$-bounded.
\end{Def}

\begin{Thm}[Kato]
	\label{lem2.5}
	Assume $\mathcal{X}$ and $\mathcal{Y}$ are Banach spaces. Let $\mathcal{T}$ and $\mathcal{A}$ be operators from $\mathcal{X}$ to $\mathcal{Y}$. Let $\mathcal{T}^{-1}$ exist and belong to $\mathscr{B}(\mathcal{Y}, \mathcal{X})$ ($\mathscr{B}(\mathcal{Y}, \mathcal{X})$ is the set of all bounded operators on $\mathcal{Y}$ to $\mathcal{X} $ and so that $\mathcal{T}$ is closed). Let $\mathcal{A}$ be $\mathcal{T}$-bounded, with the constants $a, b$ in (\ref{def2.13}) satisfying the inequality
	$$
	a\left\|\mathcal{T}^{-1}\right\|+b<1 .
	$$
	Then $\mathcal{S}=\mathcal{T}+\mathcal{A}$ is closed and invertible, and 
	\ben\label{eq:kato}
	\left\|\mathcal{S}^{-1}\right\|\leq \frac{\left\|\mathcal{T}^{-1}\right\|}{1-a\left\|\mathcal{T}^{-1}\right\|-b}.
	\een
\end{Thm}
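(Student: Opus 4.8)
The plan is the classical Neumann-series perturbation argument: factor $\mathcal{S}$ through $\mathcal{T}$ and reduce the bounded invertibility of $\mathcal{S}$ to that of an operator of the form $\mathcal{I}+(\text{something small})$ on $\mathcal{Y}$. First, since $\mathcal{T}^{-1}\in\mathscr{B}(\mathcal{Y},\mathcal{X})$ has range $\mathrm{D}(\mathcal{T})\subset\mathrm{D}(\mathcal{A})$, the composition $\mathcal{A}\mathcal{T}^{-1}$ is defined on all of $\mathcal{Y}$; applying the relative-boundedness inequality (\ref{def2.13}) with $u=\mathcal{T}^{-1}y$ and using $\mathcal{T}\mathcal{T}^{-1}=\mathcal{I}_{\mathcal{Y}}$ gives, for every $y\in\mathcal{Y}$,
\[
\|\mathcal{A}\mathcal{T}^{-1}y\|\le a\|\mathcal{T}^{-1}y\|+b\|\mathcal{T}\mathcal{T}^{-1}y\|\le\bigl(a\|\mathcal{T}^{-1}\|+b\bigr)\|y\|=:\theta\,\|y\|,
\]
so that $\mathcal{A}\mathcal{T}^{-1}\in\mathscr{B}(\mathcal{Y})$ with operator norm $\le\theta$, and $\theta=a\|\mathcal{T}^{-1}\|+b<1$ by hypothesis. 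Consequently $\mathcal{I}+\mathcal{A}\mathcal{T}^{-1}$ is boundedly invertible on $\mathcal{Y}$ via the Neumann series $\sum_{k\ge0}(-\mathcal{A}\mathcal{T}^{-1})^{k}$, with $\|(\mathcal{I}+\mathcal{A}\mathcal{T}^{-1})^{-1}\|\le(1-\theta)^{-1}$.

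Next, on the common domain $\mathrm{D}(\mathcal{S})=\mathrm{D}(\mathcal{T})$ one has the factorization $\mathcal{S}=\mathcal{T}+\mathcal{A}=(\mathcal{I}+\mathcal{A}\mathcal{T}^{-1})\,\mathcal{T}$, since $\mathcal{T}^{-1}\mathcal{T}u=u$ for $u\in\mathrm{D}(\mathcal{T})$ and $\mathcal{T}$ maps $\mathrm{D}(\mathcal{T})$ bijectively onto $\mathcal{Y}$. Hence $\mathcal{S}:\mathrm{D}(\mathcal{T})\to\mathcal{Y}$ is a composition of two bijections, so it is itself bijective, and
\[
\mathcal{S}^{-1}=\mathcal{T}^{-1}(\mathcal{I}+\mathcal{A}\mathcal{T}^{-1})^{-1}\in\mathscr{B}(\mathcal{Y},\mathcal{X}),\qquad \|\mathcal{S}^{-1}\|\le\|\mathcal{T}^{-1}\|\,(1-\theta)^{-1}=\frac{\|\mathcal{T}^{-1}\|}{1-a\|\mathcal{T}^{-1}\|-b},
\]
which is exactly (\ref{eq:kato}). (If one prefers to argue directly: $\mathcal{S}u=0$ with $u\in\mathrm{D}(\mathcal{T})$ forces $(\mathcal{I}+\mathcal{A}\mathcal{T}^{-1})\mathcal{T}u=0$, hence $\mathcal{T}u=0$ and $u=0$; and given $y\in\mathcal{Y}$, the element $u:=\mathcal{T}^{-1}(\mathcal{I}+\mathcal{A}\mathcal{T}^{-1})^{-1}y\in\mathrm{D}(\mathcal{T})$ satisfies $\mathcal{S}u=y$.) Finally, $\mathcal{S}^{-1}$ is bounded and everywhere defined, hence closed, and therefore its inverse $\mathcal{S}$ is closed as well.

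There is no genuinely hard step; the only points that need care are the domain bookkeeping — that $\mathrm{Ran}(\mathcal{T}^{-1})=\mathrm{D}(\mathcal{T})\subset\mathrm{D}(\mathcal{A})$ makes $\mathcal{A}\mathcal{T}^{-1}$ everywhere defined on $\mathcal{Y}$, and that the factorization identity holds precisely on $\mathrm{D}(\mathcal{T})$ — together with the elementary check that $\mathcal{S}$ inherits bijectivity, the norm estimate, and closedness from the factorization.
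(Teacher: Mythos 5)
Your proof is correct and is exactly the classical Neumann-series perturbation argument: the paper does not reprove this result but cites Kato's book, where the same factorization $\mathcal{S}=(\mathcal{I}+\mathcal{A}\mathcal{T}^{-1})\mathcal{T}$ and the bound $\|\mathcal{A}\mathcal{T}^{-1}\|\leq a\|\mathcal{T}^{-1}\|+b<1$ constitute the standard proof. Your domain bookkeeping and the closedness argument via the bounded, everywhere-defined inverse are both sound, so there is nothing to correct.
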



\section{Hall-MHD system with $B_\infty\neq0$: Proof of Theorem \ref{the1.1}}

Consider 
the 3D stationary  generalized Oseen system with some special critical terms in   $\mathbb{R}^3$ as follows:
\begin{equation}
	\label{eq:mixed oseen}
	\left\{\begin{array}{l}
		\partial_{x_1} w_1+\nabla p-\mu_1 \Delta w_1 - \mu_2\Delta w_2 - \alpha^{\prime}\partial_{x_1}(\nabla\times(w_1-w_2)) =f_1, \\
		-\partial_{x_1} w_2+\nabla p-\mu_1\Delta w_2 - \mu_2\Delta w_1 - \alpha^{\prime}\partial_{x_1}(\nabla\times(w_2-w_1))=f_2,  \\
		\nabla \cdot w_1=g.
	\end{array}\right.
\end{equation}
Applying Theorem \ref{thm:lizorkin}, we derive a type of $L^q$ estimate for the above mixed Oseen system of (\ref{eq:mixed oseen}).
\begin{Lem}
	[$L^q$ estimate for generalized Oseen system]\label{lem:mixed ossen}
	Let $f_1, f_2 \in L^q\left(\mathbb{R}^3\right), g \in W^{1, q}\left(\mathbb{R}^3\right)$ with $1<q<2$ and $r=\left(\frac{1}{q}-\frac{1}{2}\right)^{-1}$. Moreover, $\mu_1, \mu_2$ and $\alpha^{\prime}$ are constants satisfying $\mu_1>0$ and $\mu_1>|\mu_2|$. Then 
	there exists a unique solution $\left(w_1, w_2, p\right)\in\left(\dot{W}^{2, q}\left(\mathbb{R}^3\right) \cap L^r\left(\mathbb{R}^3\right)\right)^6 \times\left(\dot{W}^{1, q}\left(\mathbb{R}^3\right) / \mathcal{P}_0\left(\mathbb{R}^3\right)\right)$ of the system (\ref{eq:mixed oseen}) such that
	\ben
	\label{eq:w1w2 estimate}
	&& \left\|\left(\nabla^2 w_1, \nabla^2 w_2\right)\right\|_{L^q\left(\mathbb{R}^3\right)}+\left\|\left(w_1, w_2\right)\right\|_{L^r\left(\mathbb{R}^3\right)}+\|\nabla p\|_{L^q\left(\mathbb{R}^3\right)} \nonumber\\
	& \leq &C(q, \mu_1, \mu_2)\left(\left\|f_1\right\|_{L^q\left(\mathbb{R}^3\right)}+\left\|f_2\right\|_{L^q\left(\mathbb{R}^3\right)}+\|g\|_{W^{1, q}\left(\mathbb{R}^3\right)}\right) .
	\een
\end{Lem}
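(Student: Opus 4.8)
The plan is to solve the system \eqref{eq:mixed oseen} by the Fourier transform and to read off the desired estimates via Lizorkin's multiplier theorem (Theorem \ref{thm:lizorkin}). First I would take the Fourier transform of all three equations. Writing $\widehat{w}_1, \widehat{w}_2, \widehat p$ for the transforms and $\xi = (\xi_1,\xi_2,\xi_3)$, the first two equations become a linear algebraic system for $\widehat{w}_1, \widehat{w}_2$ in terms of $\widehat{f}_1, \widehat{f}_2, \widehat g$ and $\widehat p$, while the divergence equation $\nabla\cdot w_1 = g$ gives $i\xi\cdot\widehat{w}_1 = \widehat g$. The terms $\mu_1\Delta w_j$ contribute $-\mu_1|\xi|^2\widehat{w}_j$, the cross terms $\mu_2\Delta w_k$ contribute $-\mu_2|\xi|^2\widehat{w}_k$, the Oseen drift $\pm\partial_{x_1}w_j$ contributes $\pm i\xi_1\widehat{w}_j$, and the Hall-type correction $-\alpha'\partial_{x_1}(\nabla\times(w_1-w_2))$ contributes $-i\alpha'\xi_1(\xi\times(\widehat{w}_1-\widehat{w}_2))$ up to an $i$ factor. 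The key structural point, already flagged in Remark 1.2(ii), is that the Laplacian and the Hall term are of the same order; one should group $-\mu_1|\xi|^2\widehat{w}_j$ together with the $\alpha'$-term and treat the resulting matrix symbol as the principal part.

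The main work is then to invert the resulting $6\times 6$ (or, after eliminating $\widehat p$ using the divergence relation, effectively smaller) symbol matrix $\mathcal{M}(\xi)$ and to verify that each entry of $\mathcal{M}(\xi)^{-1}$, multiplied by $|\xi|^2$ (to recover $\nabla^2 w_j$), and by $|\xi|$ (to recover $\nabla p$), satisfies the Lizorkin bound \eqref{eq:bound of phi} with $\beta = 0$; and that the entries giving $w_j$ themselves satisfy it with $\beta = 1/q - 1/2$, i.e. $1/r = 1/q - \beta$, which is exactly the exponent relation in the statement. Concretely, for each multiplier $\Phi$ arising this way one must check that for every choice $\kappa_i\in\{0,1\}$,
\[
\bigl|\xi_1\bigr|^{\kappa_1+\beta}\cdots\bigl|\xi_3\bigr|^{\kappa_3+\beta}\Bigl|\tfrac{\partial^\kappa\Phi}{\partial\xi_1^{\kappa_1}\cdots\partial\xi_3^{\kappa_3}}\Bigr| \le M
\]
uniformly in $\xi$. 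This is where the hypotheses $\mu_1 > 0$ and $\mu_1 > |\mu_2|$ enter: they guarantee that the determinant of $\mathcal{M}(\xi)$ does not vanish for $\xi\neq 0$ and is bounded below by a constant multiple of an appropriate power of $|\xi|$ (after accounting for the drift and Hall pieces), so that no spurious singularities appear and the differentiated symbols remain controlled. I would isolate the scalar ``denominator'' $D(\xi) = \det\mathcal{M}(\xi)$, show $|D(\xi)| \gtrsim \mu_1\,(\text{power of }|\xi|)$ using $\mu_1 > |\mu_2|$ to dominate the off-diagonal coupling and using the fact that the Oseen and Hall symbols are purely imaginary (hence add in modulus-squared rather than cancelling the real part $\mu_1|\xi|^2$), and then estimate derivatives of $1/D$ and of the cofactor entries by the Leibniz rule. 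Once the multiplier bounds are in hand, Theorem \ref{thm:lizorkin} yields the estimate \eqref{eq:w1w2 estimate} for $w_1, w_2$ and $\nabla p$ on the Schwartz class, and a density/completion argument in the homogeneous spaces $\dot W^{2,q}\cap L^r$ and $\dot W^{1,q}/\mathcal{P}_0$ extends it to the stated data classes.

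For existence and uniqueness: existence follows by defining $(w_1,w_2,p)$ through the Fourier-multiplier formulas above for $(f_1,f_2,g)$ first smooth and compactly supported, then passing to the limit using the a priori bound just established; one checks directly that the limit solves \eqref{eq:mixed oseen} in the distributional sense. Uniqueness follows because any solution in the stated class has, after Fourier transform, components supported away from $\xi = 0$ determined by the invertible symbol $\mathcal{M}(\xi)$, so the difference of two solutions has Fourier transform supported at the origin, hence is a polynomial; the membership in $L^r$ (for $w_j$) and the quotient by $\mathcal{P}_0$ (for $p$) force it to be zero. The main obstacle I anticipate is purely computational but genuinely delicate: carrying out the explicit inversion of $\mathcal{M}(\xi)$ while keeping the Hall term $\alpha'$ grouped with the principal part, and then patiently verifying the mixed-derivative Lizorkin bounds \eqref{eq:bound of phi} for every entry — the homogeneity bookkeeping (degree $2$ for $\nabla^2 w$, degree $1$ for $\nabla p$, and the fractional shift $\beta$ for $w$ itself) has to line up exactly, and the anisotropy introduced by the $\xi_1$-only drift and Hall terms means the naive radial scaling does not immediately apply, so one must exploit the product structure $|\xi_1|^{\kappa_1}\cdots|\xi_n|^{\kappa_n}$ that Lizorkin's theorem is tailored to.
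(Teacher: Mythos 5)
Your overall strategy --- Fourier transform, explicit inversion of the symbol, Lizorkin's Theorem \ref{thm:lizorkin} with $\beta=0$ for $\nabla^2 w_j$ and $\nabla p$ and a positive $\beta$ for the $L^r$ bound, then density and a Fourier--support argument for uniqueness --- is exactly the route the paper takes. But the proposal contains two concrete errors and defers the one step that actually constitutes the proof. First, the exponent for the $L^r$ estimate is $\beta=\tfrac12$ (from $1/r=1/q-\beta$ together with $1/r=1/q-\tfrac12$), not $\beta=1/q-\tfrac12$ as you write; the latter would give $r=2$. Second, and more seriously, your proposed lower bound $|D(\xi)|\gtrsim \mu_1\,(\text{power of }|\xi|)$ cannot by itself deliver the conclusion: the resulting bound on the multipliers would be of the form $C|\xi|^{-2}$, and one can check that $|\xi|^{-2}$ fails the condition \eqref{eq:bound of phi} with $\beta=\tfrac12$ near the origin (since $|\xi_1\xi_2\xi_3|^{1/2}|\xi|^{-2}\sim|\xi|^{-1/2}$ along generic rays). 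The half-derivative gain is a genuinely anisotropic effect of the drift: the denominator the paper actually exhibits is $\Phi_0=\bigl[(\mu_1^2-\mu_2^2)|\xi|^4+\xi_1^2\bigr]^2+4(\mu_1+\mu_2)^2(\alpha')^2\xi_1^2|\xi|^6$, and the $\beta=\tfrac12$ bound is verified through the pointwise inequality $|\xi_1|^{1/2}|\xi_2|^{1/2}|\xi_3|^{1/2}\le \tfrac12|\xi_1|+\tfrac14\xi_2^2+\tfrac14\xi_3^2\lesssim\sqrt{\xi_1^2+\mu_1^2|\xi|^4}$, in which $\xi_1$ and $(\xi_2,\xi_3)$ enter with different homogeneities. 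Your closing remark shows you are aware of this issue, but awareness is not a substitute for the estimate.

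The step you label ``purely computational but genuinely delicate'' is the substantive content of the lemma, and the paper does not invert a $6\times6$ matrix blindly. It eliminates $\hat p$ by dotting both momentum equations with $i\xi$, eliminates $\hat w_2$ by adding the two equations, and is then left with a vector equation of the form $(A_1A_2-E^2)\hat w_1+(A_1+A_2+2E)\,\alpha'\xi_1\,\xi\times\hat w_1=G$; the residual cross-product term is removed by taking a further cross product with $\xi$ and using the divergence constraint in the form $-|\xi|^2\hat w_1=i\xi\hat g+\xi\times(\xi\times\hat w_1)$. This algebraic reduction is what produces the explicit scalar denominator $\Phi_0$ and the decomposition of $\hat w_1$, $i\xi\hat p$ and $\hat w_2$ into finitely many multipliers whose Lizorkin bounds (with $\beta=\tfrac12$ or $\beta=0$ as appropriate, including all mixed first derivatives in each variable) can be checked one by one. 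Without this reduction, or an equivalent explicit computation together with the anisotropic bound above, the proposal remains a plan rather than a proof.
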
	
\begin{proof}
	By performing a Fourier transform on the system of \eqref{eq:mixed oseen}, we obtain that
	\begin{equation}
		\label{eq:w1w2}
		\left\{\begin{array}{l}
			(\mu_1 |\xi|^2 + i\xi_1) \hat{w_1} + \mu_2|\xi|^2\hat{w_2} + \alpha^{\prime}\xi_1\xi\times(\hat{w_1} - \hat{w_2}) + i\xi \hat{p} = \hat{f_1},\\
			(\mu_1 |\xi|^2 - i\xi_1) \hat{w_2} + \mu_2|\xi|^2\hat{w_1} -  \alpha^{\prime}\xi_1\xi\times(\hat{w_1} - \hat{w_2}) + i\xi \hat{p} = \hat{f_2},\\
			i \xi \cdot \hat{w_1}=\hat{g}.
		\end{array}\right.  
	\end{equation}
	Taking $i\xi$ on the both sides of  $(\ref{eq:w1w2})_1$ and $(\ref{eq:w1w2})_2$ as the vector inner product, we get
	\begin{equation}
		\label{equ2.5}
		\left\{\begin{array}{l}
			(\mu_1 |\xi|^2 + i\xi_1) \hat{g} + \mu_2|\xi|^2 i\xi\cdot\hat{w_2}-|\xi|^2\hat{p} = i\xi\cdot\hat{f_1},\\
			(\mu_1 |\xi|^2 - i\xi_1) i\xi\cdot \hat{w_2} + \mu_2|\xi|^2\hat{g} -|\xi|^2 \hat{p} = i\xi\cdot\hat{f_2},
		\end{array}\right. 
	\end{equation}
	which implies
	\begin{equation}
		\label{eq:pressure}
		\hat{p} = \frac{1}{(\mu_1 - \mu_2)|\xi|^2 - i\xi_1}\left\{\left[(\mu_1^2 - \mu_2^2)|\xi|^4 +\xi_1^2\right]\frac{\hat{g}}{|\xi|^2} - (\mu_1|\xi|^2 - i\xi_1)\frac{i\xi\cdot\hat{f_1}}{|\xi|^2} + \mu_2i\xi\cdot\hat{f_2}\right\}.
	\end{equation}
	
	Next we focus on solving the system of (\ref{eq:w1w2}). For simplicity, denote 
	\ben\label{eq:symbel}
	&&A_1:=\mu_1 |\xi|^2 + i\xi_1, \quad A_2:=\mu_1 |\xi|^2 - i\xi_1, \quad C_1:=\alpha^{\prime}\xi_1\xi;\nonumber\\
	&&F_1:=\hat{f_1}-i\xi\hat{p}, \quad F_2:=\hat{f_2}-i\xi\hat{p}, \quad E := \mu_2|\xi|^2.
	\een
	Taking the sum of  $(\ref{eq:w1w2})_1$ and $(\ref{eq:w1w2})_2$ , we get
	\begin{equation}
		\label{equ2.7}
		\hat{w_2} = -\frac{(A_1+E)}{A_2+E}\hat{w_1}+\frac{F_1+F_2}{A_2+E}.
	\end{equation}
	By substituting (\ref{equ2.7}) into $(\ref{eq:w1w2})_1$, we get
	\begin{equation}
		\label{eq:G}
		(A_1A_2-E^2)\hat{w_1} + (A_1+A_2+2E)C_1\times\hat{w_1} = A_2F_1 -EF_2+ C_1\times(F_1+F_2) :=G.
	\end{equation}
	Noting that $C_1:=\alpha^{\prime}\xi_1\xi$ and
	\begin{equation}
		\label{equ2.9}
		-|\xi|^2\hat{w_1} = i\xi\hat{g} + \xi\times(\xi\times\hat{w_1}),
	\end{equation}
	by taking cross product on the both sides of  (\ref{eq:G}) by $\xi$ it follows that 
	\begin{equation}
		\label{equ2.10}
		(A_1A_2-E^2)\xi\times\hat{w_1} - (A_1+A_2+2E)\alpha^{\prime}\xi_1|\xi|^2\hat{w_1}= \xi\times G + i\alpha^{\prime}\xi_1 (A_1+A_2+2E)\xi\hat{g}.
	\end{equation}
	Combining (\ref{eq:G}) and (\ref{equ2.10}), $\hat{w_1}$ can be represented as 
	\begin{equation*}
		\label{}
		\begin{aligned}
			\hat{w_1}&=&\frac{G(A_1A_2-E^2)-\alpha^{\prime}\xi_1Q(A_1+A_2+2E) }{(A_1A_2-E^2)^2+(A_1+A_2+2E)^2\alpha'^2\xi_1^2|\xi|^2},
		\end{aligned}
	\end{equation*}
	where
	\beno
	Q=\xi\times G+  i\alpha^{\prime}\xi_1(A_1+A_2+2E)\xi \hat{g} . 
	\eeno
	For simplicity, write
	\beno
	\hat{w_1}\left[(A_1A_2-E^2)^2+(A_1+A_2+2E)^2\alpha'^2\xi_1^2|\xi|^2\right]=I_1+I_2+I_3,
	\eeno
	where
	\beno
	I_1&=& G(A_1A_2-E^2)\\
	&=& (A_1A_2-E^2)\left[ A_2\hat{f_1} - E\hat{f_2} + (E-A_2)i\xi\hat{p} + C_1\times(\hat{f_1}+\hat{f_2})   \right],
	\eeno
	\beno
	I_2&=&  -\alpha^{\prime}\xi_1(A_1+A_2+2E)   \xi\times G\\
	&=& -\alpha'\xi_1(A_1+A_2+2E)\left\{A_2\xi\times\hat{f_1} - E\xi\times\hat{f_2} + \alpha'\xi_1\xi\times\left[\xi\times(\hat{f_1}+\hat{f_2})\right]\right\},
	\eeno
	\beno
	I_3&=&  -\alpha'^2\xi_1^2(A_1+A_2+2E)^2 i\xi \hat{g},
	\eeno
	by recalling 
	(\ref{eq:symbel}) and (\ref{eq:G}).
	Furthermore,  using \eqref{eq:pressure}
	, (\ref{eq:symbel}) and (\ref{eq:G}) we have
	\ben
	\label{eq:w1 decomposition}
	&&\hat{w_1}\left\{\left[(\mu_1^2-\mu_2^2)|\xi|^4+\xi_1^2\right]^2 + 4(\mu_1+\mu_2)^2(\alpha^{\prime})^2\xi_1^2|\xi|^6\right\}\nonumber\\
	& = &\left[(\mu_1^2-\mu_2^2)|\xi|^4+\xi_1^2\right](\mu_1 |\xi|^2 - i\xi_1)\left(I - \frac{\xi\otimes\xi}{|\xi|^2}\right)\hat{f_1} \nonumber\\
	&&+\left[(\mu_1^2-\mu_2^2)|\xi|^4+\xi_1^2\right]\mu_2|\xi|^2\left(\frac{\xi\otimes\xi}{|\xi|^2}-I_3\right)\hat{f_2} \nonumber\\
	&&-\alpha^{\prime}\xi_1\left[(\mu_1+\mu_2)|\xi|^2 - i\xi_1\right]\left((\mu_1+\mu_2)|\xi|^2 - i\xi_1\right)\xi\times\hat{f_1}\nonumber\\
	&&+ \alpha^{\prime}\xi_1[(\mu_1+\mu_2)|\xi|^2 - i\xi_1]\left((\mu_1+\mu_2)|\xi|^2 +i\xi_1\right)\xi\times\hat{f_2}  \nonumber\\
	&& -2(\mu_1+\mu_2)(\alpha^{\prime})^2\xi_1^2 |\xi|^2\xi\times[\xi\times(\hat{f_1} + \hat{f_2})] \nonumber\\ 
	&&- \frac{i\xi\hat{g}}{|\xi|^2}\left\{\left[(\mu_1^2-\mu_2^2)|\xi|^4+\xi_1^2\right]^2 + 4(\mu_1+\mu_2)^2(\alpha^{\prime})^2\xi_1^2|\xi|^6\right\}\nonumber\\
	&:=&\Phi_1\hat{f_1}+\Phi_2\hat{f_2}+\Phi_3\hat{f_1} +\Phi_4\hat{f_2}+\Phi_5(\hat{f_1}+\hat{f_2}) +\Phi_6\hat{g}.
	\een
	
	Let 
	\ben\Phi_0 := \left\{\left[(\mu_1^2-\mu_2^2)|\xi|^4+\xi_1^2\right]^2 + 4(\mu_1+\mu_2)^2(\alpha^{\prime})^2\xi_1^2|\xi|^6\right\},\een
	and to prove the assertion (\ref{eq:w1w2 estimate}), we estimate $\Phi_1,\cdots,\Phi_6$, respectively.

	{\bf \underline{Estimates of $\Phi_1$}.} Note that
	$$
	\frac{\Phi_1}{\Phi_0} = \frac{\big[(\mu_1^2-\mu_2^2)|\xi|^4+\xi_1^2\big](\mu_1 |\xi|^2 - i\xi_1)(I - \frac{\xi\otimes\xi}{|\xi|^2})}{\big[(\mu_1^2-\mu_2^2)|\xi|^4+\xi_1^2\big]^2 + 4(\mu_1+\mu_2)^2(\alpha^{\prime})^2\xi_1^2|\xi|^6}.
	$$
	For all $\xi \in \left\{\xi \in \mathbb{R}^3:\left|\xi_i\right|>0\right.$, $\left.i=1, 2, 3\right\}$, Young's inequality yields
	\beno    	
	&&\left|\xi_1\right|^{1 / 2}\left|\xi_2\right|^{1 / 2}\left|\xi_3\right|^{1 / 2}\left|\frac{\Phi_1}{\Phi_0}\right| \nonumber\\
	&\leq& C \left|\frac{\left|\xi_1\right|^{1 / 2}\left|\xi_2\right|^{1 / 2}\left|\xi_3\right|^{1 / 2}}{\mu_1|\xi|^2+i\xi_1}\cdot\frac{\big[(\mu_1^2-\mu_2^2)|\xi|^4+\xi_1^2\big](\mu_1^2|\xi|^4+\xi_1^2)}{\big[(\mu_1^2-\mu_2^2)|\xi|^4+\xi_1^2\big]^2 + 4(\mu_1+\mu_2)^2(\alpha^{\prime})^2\xi_1^2|\xi|^6}\right| \nonumber\\
	&\leq& C\frac{|\xi_1|+\xi_2^2+\xi_3^2}{\sqrt{\xi_1^2 + \mu_1^2|\xi|^4}} \leq C(\mu_1,\mu_2),
	\eeno
	and for $i,j=1,2,3$, there also holds
	\beno
	\left|\xi_i\xi_j\frac{\Phi_1}{\Phi_0}\right| \leq C(\mu_1,\mu_2).
	\eeno
	Moreover,  it is immediately seen that
	\beno
	|\xi_1|^{\hat{\kappa}_1+\frac12}|\xi_2|^{\hat{\kappa}_2+\frac12}|\xi_3|^{\hat{\kappa}_3+\frac12}\left|\frac{\partial^{\kappa'}(\Phi_1/\Phi_0)}{\partial\xi_1^{\hat{\kappa}_1}\partial\xi_2^{\hat{\kappa}_2}\partial\xi_3^{\hat{\kappa}_3}}\right| \leq C(\mu_1,\mu_2),
	\eeno
	and
	\beno
	|\xi_1|^{\hat{\kappa}_1}|\xi_2|^{\hat{\kappa}_2}|\xi_3|^{\hat{\kappa}_3}\left|\frac{\partial^{\kappa'}(\xi_i\xi_j\Phi_1/\Phi_0)}{\partial\xi_1^{\hat{\kappa}_1}\partial\xi_2^{\hat{\kappa}_2}\partial\xi_3^{\hat{\kappa}_3}}\right| \leq C(\mu_1,\mu_2),
	\eeno
	for some $\hat{\kappa}_i$ is zero or one, $\kappa'=\sum_{i=1}^3\hat{\kappa}_i=0,1,2,3.$
	
	The similar arguments also hold for  $\Phi_2,\cdots,\Phi_5.$
	
	{\bf \underline{Estimates of $\Phi_6$}.} Notice that
	\ben\label{eq:decomposition}
	\frac{1}{|\xi|^2} i \xi \hat{g} = \frac{1}{|\xi|^2 + i\xi_1}i\xi\hat{g} - \frac{\xi\xi_1}{|\xi|^2} \frac{1}{|\xi|^2 + i\xi_1}\hat{g}.
	\een
	Writing $$
	\Phi_{61} = \frac{1}{|\xi|^2 + i\xi_1} , \quad
	\Phi_{62} = \frac{-\xi_1\xi}{|\xi|^2(|\xi|^2 + i\xi_1)},
	$$
	then for $\Phi_{61}$ and $\Phi_{62}$, (\ref{eq:bound of phi}) still holds for $\beta = \frac{1}{2}$ or $\beta = 0$. 
	
	Concluding the estimates of $\Phi_1,\cdots,\Phi_6$, using Theorem \ref{thm:lizorkin} we have
	\ben\label{eq:w1 estimate}
	\left\|\nabla^2 w_1\right\|_{L^q\left(\mathbb{R}^3\right)}+\left\|w_1\right\|_{L^r\left(\mathbb{R}^3\right)} \leq C(q, \mu_1, \mu_2)\left(\left\|(f_1,f_2)\right\|_{L^q\left(\mathbb{R}^3\right)}+\|g\|_{W^{1, q}\left(\mathbb{R}^3\right)}\right).
	\een
	
	{\bf \underline{Estimates of pressure.} } 
	For $i\xi \hat{p}$, recall
	\eqref{eq:pressure} and using (\ref{eq:decomposition}) we have
	\ben\label{eq:pressure decom}
	i\xi\hat{p}& =& \frac{1}{(\mu_1 - \mu_2)|\xi|^2 - i\xi_1}\left\{(\mu_1|\xi|^2 - i\xi_1)\frac{\xi\otimes\xi\cdot\hat{f_1}}{|\xi|^2} - \mu_2\xi\otimes\xi\cdot\hat{f_2}\right\}\nonumber\\
	&&+\frac{1}{(\mu_1 - \mu_2)|\xi|^2 - i\xi_1}\left\{\left[(\mu_1^2 - \mu_2^2)|\xi|^4 +\xi_1^2\right]\frac{i\xi\hat{g}}{|\xi|^2}\right\}\nonumber\\
	&=&\Phi_{71}\cdot\hat{f_1}+\Phi_{72}\cdot\hat{f_2}+\Phi_{73}i\xi\hat{g}+\Phi_{74}\hat{g},
	\een
	where
	$$
	\Phi_{71} = \frac{\mu_1|\xi|^2 - i\xi_1}{(\mu_1 - \mu_2)|\xi|^2 - i\xi_1}\frac{\xi\otimes\xi}{|\xi|^2},\quad
	\Phi_{72} = \frac{-\mu_2\xi\otimes\xi}{(\mu_1 - \mu_2)|\xi|^2 - i\xi_1},
	$$
	$$
	\Phi_{73} = \frac{(\mu_1^2 - \mu_2^2)|\xi|^4 +\xi_1^2}{(\mu_1 - \mu_2)|\xi|^2 - i\xi_1}\frac{1}{|\xi|^2 + i\xi_1},\quad
	\Phi_{74} = \frac{(\mu_1^2 - \mu_2^2)|\xi|^4 +\xi_1^2}{(\mu_1 - \mu_2)|\xi|^2 - i\xi_1}\frac{-\xi_1\xi}{|\xi|^2(|\xi|^2 + i\xi_1)},
	$$
	which satisfy (\ref{eq:bound of phi}) with $\beta=0$. Hence, using Theorem \ref{thm:lizorkin} again, we get
	\ben\label{eq:p estimate}
	\|\nabla p\|_{L^q\left(\mathbb{R}^3\right)} 
	\leq C(q, \mu_1, \mu_2)\left(\left\|f_1\right\|_{L^q\left(\mathbb{R}^3\right)}+\left\|f_2\right\|_{L^q\left(\mathbb{R}^3\right)}+\|g\|_{W^{1, q}\left(\mathbb{R}^3\right)}\right).
	\een
	
	{\bf \underline{Estimates of $w_2$.} }  Recalling \eqref{equ2.7} and (\ref{eq:symbel}) we get
	\ben
	\hat{w_2} = -\frac{(\mu_1+\mu_2) |\xi|^2 + i\xi_1}{(\mu_1+\mu_2) |\xi|^2 - i\xi_1}\hat{w_1}+\frac{\hat{f_1}+\hat{f_2}-2i\xi\hat{p}}{(\mu_1+\mu_2) |\xi|^2 - i\xi_1}.
	\een
	Using (\ref{eq:w1 decomposition}), (\ref{eq:decomposition}) and (\ref{eq:pressure decom}), $\hat{w_2}$ is the sum of the following terms multiplying the corresponding $\hat{f_1}, \hat{f_2}, \hat{g}, i\xi\hat{g} $:
	\beno
	&&\frac{(\mu_1+\mu_2) |\xi|^2 + i\xi_1}{(\mu_1+\mu_2) |\xi|^2 - i\xi_1}\frac{\Phi_k}{\Phi_0}, \quad k=1,\cdots,5;\\
	&&\frac{(\mu_1+\mu_2) |\xi|^2 + i\xi_1}{(\mu_1+\mu_2) |\xi|^2 - i\xi_1}{\Phi_{6k}}, \quad k=1,2;\\
	&&  \frac{1}{(\mu_1+\mu_2) |\xi|^2 - i\xi_1}, \quad \frac{1}{(\mu_1+\mu_2) |\xi|^2 - i\xi_1}{\Phi_{7k}}, \quad k=1,2,3,4,
	\eeno
	which satisfy (\ref{eq:bound of phi}) of Theorem \ref{thm:lizorkin} with $\beta = \frac{1}{2}$ or $\beta = 0$. Thus we also get the estimate of $w_2$ in (\ref{eq:w1w2 estimate}).
	The proof is complete.
	
\end{proof}

\begin{Lem}
	\label{lem2.6}
	Assume that $1<q<2$, $f_1, f_2 \in L^q\left(\mathbb{R}^3\right)$, $g \in W^{1, q}\left(\mathbb{R}^3\right)$, $\mathcal{M} \in$ $\left(L^2\left(\mathbb{R}^3\right)\right)^{6 \times 6}$ and $b \in L^{\infty}(\mathbb{R}^3)$. Let $\mu_1, \mu_2$ and $\alpha^{\prime}$ be constants satisfying $\mu_1>0$ and $\mu_1>|\mu_2|$. Moreover, $(w_1, w_2, p)$ satisfies the following perturbation  Oseen system:
	\ben
	\label{eq:perturbation oseen}
	\left\{\begin{aligned}
		-\mu_1\Delta\left(\begin{array}{l}
			w_1 \\
			w_2
		\end{array}\right)+\partial_{x_1}\left(\begin{array}{l}
			w_1 \\
			-w_2
		\end{array}\right)-b \cdot\nabla\left(\begin{array}{l}
			\nabla \times (w_1 - w_2) \\
			\nabla \times (w_2 - w_1)
		\end{array}\right) \\
		\quad \quad\quad -\mu_2\Delta\left(\begin{array}{l}
			w_2 \\
			w_1
		\end{array}\right)
		+\mathcal{M}\left(\begin{array}{l}
			w_1 \\
			w_2
		\end{array}\right)\\-\alpha^{\prime} \partial_{x_1}\left(\begin{array}{l}
			\nabla \times (w_1 - w_2) \\
			\nabla \times (w_2 - w_1)
		\end{array}\right)
		+\left(\begin{array}{c}
			\nabla p \\
			\nabla p
		\end{array}\right)&=\left(\begin{array}{l}
			f_1 \\
			f_2
		\end{array}\right), \\
		\nabla \cdot w_1&=g. \end{aligned}
	\right.
	\een
	There exists a small constant $\varepsilon_0>0$ such that if
	$$
	\|\mathcal{M}\|_{L^2\left(\mathbb{R}^3\right)}+\|b\|_{L^{\infty}(\mathbb{R}^3)} <\varepsilon_0,
	$$
	then there exists a unique $(w_1,w_2, p) \in \left(\dot{W}^{2, q}\left(\mathbb{R}^3\right) \cap L^r\left(\mathbb{R}^3\right)\right)^6 \times$ $\left(\dot{W}^{1, q}\left(\mathbb{R}^3\right) / \mathcal{P}_0\left(\mathbb{R}^3\right)\right)$ solves (\ref{eq:perturbation oseen}) such that
	\ben
	\label{eq:perturbation w1w2}
	&& \left\|\left(\nabla^2 w_1, \nabla^2 w_2\right)\right\|_{L^q\left(\mathbb{R}^3\right)}+\|(w_1, w_2)\|_{L^r\left(\mathbb{R}^3\right)}+\|\nabla p\|_{L^q\left(\mathbb{R}^3\right)}\nonumber \\
	& \leq &C\left(q, \mu_1, \mu_2, \varepsilon_0\right)\left(\left\|f_1\right\|_{L^q\left(\mathbb{R}^3\right)}+\left\|f_2\right\|_{L^q\left(\mathbb{R}^3\right)}+\|g\|_{W^{1, q}\left(\mathbb{R}^3\right)}\right),
	\een
	where $r=\left(\frac{1}{q}-\frac{1}{2}\right)^{-1}$.
\end{Lem}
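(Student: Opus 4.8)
The plan is to recast the perturbed system (\ref{eq:perturbation oseen}) as an operator equation $\mathcal{S}U=(f_1,f_2,g)$ for $U=(w_1,w_2,p)$, where $\mathcal{S}=\mathcal{T}+\mathcal{A}$. Here $\mathcal{T}$ is the generalized Oseen operator attached to (\ref{eq:mixed oseen}), viewed as acting from the data space $\mathcal{Y}:=(L^q(\mathbb{R}^3))^6\times W^{1,q}(\mathbb{R}^3)$ onto the solution space $\mathcal{X}:=(\dot{W}^{2,q}(\mathbb{R}^3)\cap L^r(\mathbb{R}^3))^6\times(\dot{W}^{1,q}(\mathbb{R}^3)/\mathcal{P}_0(\mathbb{R}^3))$, with $r=(\frac1q-\frac12)^{-1}$, and $\mathcal{A}$ places the two perturbation contributions $\mathcal{M}(w_1,w_2)$ and $-b\cdot\nabla(\nabla\times(w_1-w_2),\nabla\times(w_2-w_1))$ into the momentum slots, with $0$ in the divergence slot. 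Lemma \ref{lem:mixed ossen} is precisely the statement that $\mathcal{T}^{-1}$ exists and belongs to $\mathscr{B}(\mathcal{Y},\mathcal{X})$ with $\|\mathcal{T}^{-1}\|\le C(q,\mu_1,\mu_2)$; in particular $\mathcal{T}$ is closed, and since $\mathcal{A}U$ requires only $\nabla^2(w_1,w_2)\in L^q$ and $(w_1,w_2)\in L^r$, which are available on all of $\mathcal{X}$, we also have $D(\mathcal{T})\subset D(\mathcal{A})$. So everything reduces to showing that $\mathcal{A}$ is $\mathcal{T}$-bounded in the sense of (\ref{def2.13}) with a relative bound that can be made arbitrarily small once $\|\mathcal{M}\|_{L^2}+\|b\|_{L^\infty}$ is small, and then invoking Kato's theorem (Theorem \ref{lem2.5}).

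The heart of the matter is the relative boundedness estimate, which I would obtain by bounding the two pieces of $\mathcal{A}U$ in $L^q(\mathbb{R}^3)$ separately for $U=(w_1,w_2,p)\in\mathcal{X}$. For the zeroth-order term, Hölder's inequality with exponents $2$ and $r$ — legitimate precisely because $\frac1q=\frac12+\frac1r$ is the very definition of $r$ — gives $\|\mathcal{M}(w_1,w_2)\|_{L^q}\le\|\mathcal{M}\|_{L^2}\,\|(w_1,w_2)\|_{L^r}$. For the Hall-type term, since $b\in L^\infty$ while $\nabla\times(w_i-w_j)$ carries a single derivative of $(w_1,w_2)$, the expression $b\cdot\nabla(\nabla\times(w_1-w_2))$ is pointwise dominated by $|b|$ times $|\nabla^2(w_1,w_2)|$, so $\|b\cdot\nabla(\nabla\times(w_1-w_2),\nabla\times(w_2-w_1))\|_{L^q}\le C\|b\|_{L^\infty}\|\nabla^2(w_1,w_2)\|_{L^q}$. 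Since $\|(w_1,w_2)\|_{L^r}$ and $\|\nabla^2(w_1,w_2)\|_{L^q}$ are both part of the $\mathcal{X}$-norm of $U$, this yields $\|\mathcal{A}U\|_{\mathcal{Y}}\le C_0\big(\|\mathcal{M}\|_{L^2}+\|b\|_{L^\infty}\big)\|U\|_{\mathcal{X}}$ for all $U\in\mathcal{X}$, which is exactly (\ref{def2.13}) with $a=C_0(\|\mathcal{M}\|_{L^2}+\|b\|_{L^\infty})$ and relative bound equal to $0$.

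To conclude, I would choose $\varepsilon_0=\varepsilon_0(q,\mu_1,\mu_2)>0$ so small that $C_0\varepsilon_0\|\mathcal{T}^{-1}\|<1$, which is possible since $\|\mathcal{T}^{-1}\|\le C(q,\mu_1,\mu_2)$. Then, for $\|\mathcal{M}\|_{L^2}+\|b\|_{L^\infty}<\varepsilon_0$ the smallness hypothesis of Theorem \ref{lem2.5} holds (its relative bound is $0$ and $a\|\mathcal{T}^{-1}\|\le C_0\varepsilon_0\|\mathcal{T}^{-1}\|<1$), so $\mathcal{S}=\mathcal{T}+\mathcal{A}$ is closed and boundedly invertible, and (\ref{eq:kato}) gives $\|\mathcal{S}^{-1}\|\le\|\mathcal{T}^{-1}\|/(1-a\|\mathcal{T}^{-1}\|)\le C(q,\mu_1,\mu_2,\varepsilon_0)$. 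Applying $\mathcal{S}^{-1}$ to $(f_1,f_2,g)\in\mathcal{Y}$ then produces the unique $(w_1,w_2,p)\in\mathcal{X}$ solving (\ref{eq:perturbation oseen}) together with the asserted bound (\ref{eq:perturbation w1w2}). The only genuinely delicate point is the bookkeeping in the operator setup, and within it the observation that an $L^2$ matrix $\mathcal{M}$ tested against an $L^r$ vector lands exactly in $L^q$ because $\frac1r=\frac1q-\frac12$; this is the one place where the specific value of $r$ enters and is precisely why $\mathcal{M}\in L^2$ is the right hypothesis. Once this is in place, the invertibility is immediate from Lemma \ref{lem:mixed ossen} and Kato's theorem.
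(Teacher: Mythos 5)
Your proposal is correct and follows essentially the same route as the paper: recast the system as $\mathcal{T}+\mathcal{A}$ on the spaces $\mathcal{X},\mathcal{Y}$, use Lemma \ref{lem:mixed ossen} for the bounded invertibility of the unperturbed operator, bound $\mathcal{M}(w_1,w_2)$ by H\"older with $\tfrac1q=\tfrac12+\tfrac1r$ and the Hall-type term by $\|b\|_{L^\infty}\|\nabla^2(w_1,w_2)\|_{L^q}$, and close with Kato's stability theorem. The only (immaterial) difference is that you record the relative bound against $\|U\|_{\mathcal{X}}$ (i.e.\ $a\neq0$, $b=0$ in (\ref{def2.13})), whereas the paper converts it via $\|U\|_{\mathcal{X}}\le C_2\|\mathcal{K}U\|_{\mathcal{Y}}$ into a bound against $\|\mathcal{T}U\|_{\mathcal{Y}}$; both satisfy Kato's smallness hypothesis.
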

\begin{proof}
	We denote the Banach spaces
	$$
	\begin{aligned}
		& \mathcal{X}:=\left(\dot{W}^{2, q}\left(\mathbb{R}^3\right) \cap L^r\left(\mathbb{R}^3\right)\right)^6 \times \dot{W}^{1, q}\left(\mathbb{R}^3\right) / \mathcal{P}_0\left(\mathbb{R}^3\right), \\
		& \mathcal{Y}:=\left(L^q\left(\mathbb{R}^3\right)\right)^6 \times W^{1, q}\left(\mathbb{R}^3\right),
	\end{aligned}
	$$
	with norms
	$$
	\begin{aligned}
		\|(v, B, p)\|_{\mathcal{X}} & :=\left\|\left(\nabla^2 v, \nabla^2 B \right)\right\|_{L^q\left(\mathbb{R}^3\right)}+\|(v, B)\|_{L^r\left(\mathbb{R}^3\right)}+\|\nabla p\|_{L^q\left(\mathbb{R}^3\right)}, \\
		\left\|\left(f_1, f_2, g\right)\right\|_{\mathcal{Y}} & :=\left\|f_1\right\|_{L^q\left(\mathbb{R}^3\right)}+\left\|f_2\right\|_{L^q\left(\mathbb{R}^3\right)}+\|g\|_{W^{1, q}\left(\mathbb{R}^3\right)},
	\end{aligned}
	$$
	respectively. Thanks to Lemma \ref{lem:mixed ossen}, the operator $\mathcal{K}$ which is defined by
	$$
	\begin{aligned}
		\mathcal{K}:\qquad \mathcal{X} & \mapsto \mathcal{Y} \\
		\left(\begin{array}{l}
			w_1 \\
			w_2 \\
			p
		\end{array}\right) & \mapsto\left(\begin{array}{l}-\mu_1 \Delta w_1+\partial_{x_1} w_1-\mu_2\Delta w_2+\nabla p- \alpha^{\prime}\partial_{x_1}(\nabla\times(w_1-w_2))\\
			-\mu_1 \Delta w_2-\partial_{x_1} w_2-\mu_2\Delta w_1+\nabla p- \alpha^{\prime}\partial_{x_1}(\nabla\times(w_2-w_1))\\
			\nabla \cdot w_1\end{array}\right),
	\end{aligned}
	$$
	admits a bounded inverse operator $\mathcal{K}^{-1}$ and there exists a constant $C_2=C(q, \mu_1, \mu_2)$ such that
	$$
	\left\|\left(\nabla^2 w_1, \nabla^2 w_2\right)\right\|_{L^q\left(\mathbb{R}^3\right)}+\|(w_1, w_2)\|_{L^r\left(\mathbb{R}^3\right)}+\|\nabla p\|_{L^q\left(\mathbb{R}^3\right)} \leq C_2\|\mathcal{K}(w_1, w_2, p)\|_{\mathcal{Y}} .
	$$
	Note that the operator $\mathcal{H}: \mathcal{X} \mapsto \mathcal{Y}$, which is defined by
	$$
	\mathcal{H}(w_1, w_2, p)=\left(\left[\mathcal{M}\left(\begin{array}{l}
		w_1 \\
		w_2
	\end{array}\right)\right], 0\right) -\left(\left[b \cdot \nabla\left(\begin{array}{l}
		\nabla \times (w_1-w_2) \\
		\nabla \times (w_2-w_1)
	\end{array}\right)\right], 0\right)
	$$
	satisfies
	$$
	\begin{aligned}
		\|\mathcal{H}(w_1, w_2, p)\| _{\mathcal{Y}} \leq C_2(\|\mathcal{M}\|_{L^2\left(\mathbb{R}^3\right)}+\|b\|_{L^{\infty}(\mathbb{R}^3)})\|\mathcal{K}(w_1, w_2, p)\|_{\mathcal{Y}}.
	\end{aligned}
	$$
	Choosing $\varepsilon_0 = (C_2+1)^{-1}$, it follows that
	$$
	C_2(\|\mathcal{M}\|_{L^2\left(\mathbb{R}^3\right)}+\|b\|_{L^{\infty}(\mathbb{R}^3)})<1 .
	$$
	Then Theorem \ref{lem2.5} yields if $\|\mathcal{M}\|_{L^2\left(\mathbb{R}^3\right)}+\|b\|_{L^{\infty}(\mathbb{R}^3)} <\varepsilon_0$, the operator $\mathcal{K}+\mathcal{H}$ admits a bounded inverse. Then, the estimate (\ref{eq:perturbation w1w2}) holds automatically by the formula (\ref{eq:kato}).
\end{proof}

\label{sec3}
\begin{proof}[Proof of Theorem \ref{the1.1}.]
	Without loss of generality, let $\alpha = 1$. Denoting $\eta:=B-B_{\infty}$, one derives
	\begin{equation}
		\label{equ3.1}
		\left\{\begin{array}{l}
			u \cdot \nabla u+\partial_{x_1} \eta+\nabla p-\eta \cdot \nabla \eta-\kappa \Delta u=0, \\
			u \cdot \nabla \eta+\partial_{x_1} u-\eta \cdot \nabla u-\nu \Delta \eta=\nabla\times((\nabla\times B)\times B), \\
			\nabla \cdot u=0, \quad \nabla \cdot \eta=0.
		\end{array} \right.
	\end{equation}
	As in \cite{Wei}, we introduce the Els\"{a}sser variables:
	$$
	w_1=u+\eta ; \quad w_2=u-\eta.
	$$
	Denote $\mu_1 = \frac{1}{2}(\kappa + \nu)$, $\mu_2 = \frac{1}{2}(\kappa - \nu)$, then $w_1$ and $w_2$ satisfy the following system
	\begin{equation}
		\label{equ3.2}
		\left\{\begin{array}{l}
			w_2 \cdot \nabla w_1+\partial_{x_1} w_1+\nabla p-\mu_1\Delta w_1-\mu_2\Delta w_2=\nabla\times((\nabla\times B)\times B), \\
			w_1 \cdot \nabla w_2-\partial_{x_1} w_2+\nabla p-\mu_1\Delta w_2-\mu_2\Delta w_1=-\nabla\times((\nabla\times B)\times B), \\
			\nabla \cdot w_1=0, \quad \nabla \cdot w_2=0, \\
			\int_{\mathbb{R}^3}\left|\nabla w_1\right|^2 d x<+\infty, \quad \int_{\mathbb{R}^3}\left|\nabla w_2\right|^2 d x<+\infty.
		\end{array} \right.
	\end{equation}
	Next we will prove $u=\eta\equiv0$, which is divided into three steps.
	
	{\bf Step I: the estimate of pressure.}  Acting the divergence operator on $(\ref{equ3.1})_1$, we have
	$$
	\begin{aligned}
		p =\sum_{i, j=1}^3-\frac{1}{\triangle} \partial_i \partial_j\left(u_i u_j-\eta_i \eta_j\right) =\sum_{i, j=1}^3 \mathcal{R}_i \mathcal{R}_j\left(u_i u_j-\eta_i \eta_j\right),
	\end{aligned}
	$$
	where $\mathcal{R}_i=\frac{\partial_i}{\sqrt{-\Delta}}$ denotes the i-th Riesz transform. By the continuity of the operator $\mathcal{R}_i$ on Lebesgue space $L^\ell\left(\mathbb{R}^3\right)$ for $2<\ell<+\infty$, we have
	$$
	\|p\|_{L^{\frac{\ell}{2}}(\mathbb{R}^3)} \leq C\|(u, \eta)\|_{L^\ell (\mathbb{R}^3)}^2.
	$$
	Due to the Sobolev imbedding,
	$$
	\|u\|_{L^6\left(\mathbb{R}^3\right)}+\|\eta\|_{L^6\left(\mathbb{R}^3\right)} \leq C\left(\|\nabla u\|_{L^2\left(\mathbb{R}^3\right)}+\|\nabla \eta\|_{L^2\left(\mathbb{R}^3\right)}\right)<\infty,
	$$
	$$
	\|w_1\|_{L^6\left(\mathbb{R}^3\right)}+\|w_2\|_{L^6\left(\mathbb{R}^3\right)} \leq C\left(\|\nabla w_1\|_{L^2\left(\mathbb{R}^3\right)}+\|\nabla w_2\|_{L^2\left(\mathbb{R}^3\right)}\right)<\infty,
	$$
	and we have 
	$$
	\|u\|_{L^6\left(\mathbb{R}^3\right)}+\|\eta\|_{L^6\left(\mathbb{R}^3\right)}+\|p\|_{L^3(\mathbb{R}^3)}< \infty.
	$$

	{\bf Step II: $L^3$ estimate of $w_1$ and  $w_2$.}   Define a cut-off function $\psi \in C^{\infty}\left(\mathbb{R}^3\right)$ such that
	\begin{equation}
		\label{eq:psi}
		\psi(x)=\psi(|x|)=\left\{\begin{array}{lll}
			1, & \text { if } & |x|>2 M , \\
			0, & \text { if } & |x|<M,
		\end{array}\right.
	\end{equation}
	and $0 \leq \psi(x) \leq 1, x \in B_{2 M}\setminus B_M$.
	Multiplying $\psi$ on the both sides of $(\ref{equ3.2})_{1,2,3}$, it follows that
	\ben
	\label{equ3.4}
	\left\{\begin{aligned} 
		\partial_{x_1}\left(\begin{array}{c}
			\psi w_1 \\
			-\psi w_2
		\end{array}\right)+\left(\begin{array}{cc}
			0 & \left(\nabla w_1\right)^T \chi_{B_M^c} \\
			\left(\nabla w_2\right)^T \chi_{B_M^c} & 0
		\end{array}\right) \cdot\left(\begin{array}{l}
			\psi w_1 \\
			\psi w_2
		\end{array}\right)\\-\mu_1\Delta\left(\begin{array}{c}
			\psi w_1 \\
			\psi w_2
		\end{array}\right)
		+\left(\begin{array}{c}
			\nabla(\psi p) \\
			\nabla(\psi p)
		\end{array}\right) 
		-\mu_2\Delta\left(\begin{array}{c}
			\psi w_2 \\
			\psi w_1
		\end{array}\right)\\-\frac{1}{2}\chi_{B_M^c}\eta\cdot\nabla\left(\begin{array}{c}
			\nabla\times(\psi w_1 - \psi w_2) \\
			\nabla\times(\psi w_2 - \psi w_1) 
		\end{array}\right) \\
		+\frac{1}{2}\partial_{x_1}\left(\begin{array}{c}
			\nabla\times(\psi w_1 - \psi w_2) \\
			\nabla\times(\psi w_2 - \psi w_1) 
		\end{array}\right)
		&=\left(\begin{array}{c}
			G_1(\psi) \\
			G_2(\psi)
		\end{array}\right),  \\
		\nabla \cdot\left(\psi w_1\right)&=\nabla \psi \cdot w_1,
	\end{aligned}\right.
	\een
	where
	$$
	\begin{aligned}
		G_1(\psi)&=-2\mu_1 \nabla \psi \cdot \nabla w_1-\mu_1(\Delta \psi) w_1+w_1 \partial_{x_1} \psi+(\nabla \psi) p \\
		&\quad-2\mu_2 \nabla \psi \cdot \nabla w_2-\mu_2(\Delta \psi) w_2 +\partial_{x_1}\psi\nabla\times\eta+\partial_{x_1}(\nabla\psi\times\eta) \\
		&\quad-(\eta\cdot\nabla\psi)\nabla\times\eta - \eta\cdot\nabla(\nabla\psi\times\eta) - \psi(\nabla\times B)\cdot\nabla B
	\end{aligned}
	$$
	and $$
	\begin{aligned}
		G_2(\psi)&=-2\mu_1 \nabla \psi \cdot \nabla w_2-\mu_1(\Delta \psi) w_2-w_2 \partial_{x_1} \psi+(\nabla \psi) p \\
		&\quad-2\mu_2 \nabla \psi \cdot \nabla w_1-\mu_2(\Delta \psi) w_1 -\partial_{x_1}\psi\nabla\times\eta - \partial_{x_1}(\nabla\psi\times\eta) \\
		&\quad+(\eta\cdot\nabla\psi)\nabla\times\eta + \eta\cdot\nabla(\nabla\psi\times\eta) + \psi(\nabla\times B)\cdot\nabla B.
	\end{aligned}
	$$
	Note that 
	\begin{equation}
		\label{equ3.5}
		\begin{aligned}
			\left\|G_1(\psi)\right\|_{L^{\frac{6}{5}}\left(\mathbb{R}^3\right)} \leq & C(\mu_1, \mu_2)\left[\|\nabla \psi\|_{L^3\left(\mathbb{R}^3\right)}\|\nabla w_1\|_{L^2\left(\mathbb{R}^3\right)}+\|\Delta \psi\|_{L^{\frac{3}{2}}\left(\mathbb{R}^3\right)}\|w_1\|_{L^6\left(\mathbb{R}^3\right)} \right]\\
			& +C(\mu_1, \mu_2)\left[\left\|\partial_{x_1} \psi\right\|_{L^{\frac{3}{2}}\left(\mathbb{R}^3\right)}\|w_1\|_{L^6\left(\mathbb{R}^3\right)}+\|\nabla \psi\|_{L^2\left(\mathbb{R}^3\right)}\|p\|_{L^3\left(\mathbb{R}^3\right)} \right]\\
			& +C(\mu_1, \mu_2)\left[\|\nabla \psi\|_{L^3\left(\mathbb{R}^3\right)}\|\nabla w_2\|_{L^2\left(\mathbb{R}^3\right)}+\|\Delta \psi\|_{L^{\frac{3}{2}}\left(\mathbb{R}^3\right)}\|w_2\|_{L^6\left(\mathbb{R}^3\right)} \right]\\
			& +C(\mu_1, \mu_2)\left[\|\nabla\psi\|_{L^3 (\mathbb{R}^3)}\|\nabla \eta\|_{L^2 (\mathbb{R}^3)} +
			\|\Delta\psi\|_{L^{\frac{3}{2}} (\mathbb{R}^3)}\|\eta\|_{L^6 (\mathbb{R}^3)} \right]\\
			&+C(\mu_1, \mu_2)\left[\|\nabla\psi\|_{L^6 (\mathbb{R}^3)}\|\nabla \eta\|_{L^2 (\mathbb{R}^3)}\| \eta\|_{L^6 (\mathbb{R}^3)} + \|\Delta \psi\|_{L^2 (\mathbb{R}^3)}\| \eta\|^2 _{L^6 (\mathbb{R}^3)} \right]\\
			&+ C(\mu_1, \mu_2)\|\psi\|_{L^{\infty}(\mathbb{R}^3)}\|\nabla B\|^{\frac{1}{3}}_{L^{\infty}(\mathbb{R}^3)}\|\nabla B\|^{\frac{5}{3}}_{L^{2}(\mathbb{R}^3)}<C(\mu_1, \mu_2, M),
		\end{aligned}
	\end{equation}
	and similarly 
	$$
	\left\|G_2(\psi)\right\|_{L^{\frac{6}{5}}\left(\mathbb{R}^3\right)}\leq C(\mu_1, \mu_2, M).
	$$
	Also,
	$$
	\begin{aligned}
		\|\nabla \psi \cdot w_1\|_{W^{1, \frac{6}{5}}\left(\mathbb{R}^3\right)} \leq & \|\nabla \psi\|_{L^{\frac{3}{2}}\left(\mathbb{R}^3\right)}\|w_1\|_{L^6\left(\mathbb{R}^3\right)}+\left\|\nabla^2 \psi\right\|_{L^{\frac{3}{2}}\left(\mathbb{R}^3\right)}\|w_1\|_{L^6\left(\mathbb{R}^3)\right.} \\
		& +\|\nabla \psi\|_{L^3\left(\mathbb{R}^3\right)}\|\nabla w_1\|_{L^2\left(\mathbb{R}^3\right)} \\
		\leq & C(M)<\infty .
	\end{aligned}
	$$
	Let $\varepsilon_0$ as in Lemma \ref{lem2.6}, by the finite Dirichlet energy and the uniform convergence of $\eta$ at infinity, there exists an $M>0$ such that
	$$
	\|\nabla w_1\|_{L^2 (B_M^c)} +\|\nabla w_2\|_{L^2 (B_M^c)} +\|\frac{1}{2} \eta\|_{L^{\infty}(B_M^c)}<\varepsilon_0.
	$$
	Then applying Lemma \ref{lem2.6} to the system (\ref{equ3.4}), choosing $q=\frac{6}{5}$ and $\alpha^{\prime} = -\frac{1}{2}$, we have $\psi w_1, \psi w_2 \in L^3\left(\mathbb{R}^3\right)$. 
	It follows that
	\begin{equation}
		\label{equ3.6}
		\|u\|_{L^3\left(\mathbb{R}^3\right)}+\|\eta\|_{L^3\left(\mathbb{R}^3\right)} + \|p\|_{L^{\frac{3}{2}}\left(\mathbb{R}^3\right)}<\infty,
	\end{equation}
	since $w_1$ and $w_2$  are smooth.
	
	{\bf Step III: vanishing of $u$ and $\eta$.}    
	Choose a cut-off function $\varphi \in C_c^{\infty}\left(\mathbb{R}^3\right)$ such that
	$$
	\varphi(x)=\varphi(|x|)=\left\{\begin{array}{lll}
		1, & \text { if } & |x|<1 ; \\
		0, & \text { if } & |x|>2 ,
	\end{array}\right.
	$$
	and $0 \leq \varphi(x) \leq 1$ for any $1 \leq|x| \leq 2$. Then for any $R>0$, denote
	\ben
	\varphi_R(x):=\varphi\left(\frac{|x|}{R}\right) .
	\een
	Multiplying $(\ref{equ3.1})_1$ by $u \varphi_R$,  $(\ref{equ3.1})_2$ by $\eta \varphi_R$ and integrating over $\mathbb{R}^3$, we have
	\beno
	&&\int_{\mathbb{R}^3} \varphi_R\left(\kappa|\nabla u|^2+\nu|\nabla \eta|^2\right) d x\nonumber \\
	& =&\frac{1}{2} \int_{\mathbb{R}^3} \Delta \varphi_R\left(\kappa|u|^2+\nu|\eta|^2\right) d x+\int_{\mathbb{R}^3}\left(\eta\cdot u\right) \partial_{x_1} \varphi_R d x \nonumber\\
	& &+\int_{\mathbb{R}^3} p u \cdot \nabla \varphi_R d x+\frac{1}{2} \int_{\mathbb{R}^3}\left[\left(|u|^2+|\eta|^2\right) u-2(\eta \cdot u) \eta\right] \cdot \nabla \varphi_R d x\nonumber\\
	&& -\int_{\mathbb{R}^3} (\nabla\times \eta)\times \eta\cdot(\eta\times \nabla \varphi_R)dx
	-\int_{\mathbb{R}^3} (\nabla\times \eta)\times B_{\infty}\cdot(\eta\times \nabla \varphi_R)dx\nonumber\\
	&:=&\sum_{i=1}^6 I_i,
	\eeno
	where
	we used
	$$
	\begin{aligned}
		\int \nabla \times F \cdot G \phi d x & =\int \nabla \times G \cdot F \phi d x+\int G \times F \cdot \nabla \phi d x \\
		& =\int \nabla \times G \cdot F \phi d x-\int F \cdot(G \times \nabla \phi) d x ,
	\end{aligned}
	$$
	where $F, G \in \mathbb{R}^3$ and $\phi$ is a cut-off function.
	
	By H${\rm \ddot{o}}$lder's inequality, we have
	$$
	\begin{aligned}
		\left|I_1\right| 
		& \lesssim R^{-1}\left(\|u\|_{L^3\left(B_{2 R}\setminus B_R\right)}^2+\|\eta\|_{L^3\left(B_{2 R}\setminus B_R\right)}^2\right) , \\
		\left|I_2\right|
		& \lesssim\|u\|_{L^3\left(B_{2 R}\setminus B_R\right)}\|\eta\|_{L^3\left(B_{2 R}\setminus B_R\right)} , \\
		\left|I_3\right| 
		& \lesssim R^{-1}\|p\|_{L^{\frac{3}{2}}\left(B_{2 R}\setminus B_R\right)}\|u\|_{L^3\left(B_{2 R}\setminus B_R\right)} , \\
		\left|I_4\right| 
		& \lesssim R^{-1}\left(\|u\|_{L^3\left(B_{2 R}\setminus B_R\right)}^3+\|\eta\|_{L^3\left(B_{2 R}\setminus B_R\right)}^3\right), \\
		\left|I_5\right| 
		& \lesssim R^{-\frac{1}{2}}\|\eta\|^2 _{L^6\left(B_{2 R}\setminus B_R\right)}\|\nabla\eta\|_{L^2\left(B_{2 R}\setminus B_R\right)}, \\
		\left|I_6\right| 
		& \lesssim \|\eta\|_{L^6\left(B_{2 R}\setminus B_R\right)}\|\nabla\eta\|_{L^2\left(B_{2 R}\setminus B_R\right)}.
	\end{aligned}
	$$
	Hence, letting $R \rightarrow \infty$ we get
	$$
	\int_{\mathbb{R}^3}\left(\kappa|\nabla u|^2+\nu|\nabla \eta|^2\right) dx=0.
	$$
	It follows that $u$ and $\eta$ are constants. By $u, \eta \in L^3\left(\mathbb{R}^3\right)$, we have $u=\eta \equiv 0$ for all $x \in \mathbb{R}^3$. The proof is complete.
\end{proof}

\section{Hall-MHD system with $u_\infty\neq0$: Proof of Theorem \ref{the1.4}}
\label{sec4}

Next consider the 3D degenerate Oseen system in   $\mathbb{R}^3$:
\begin{equation}
	\label{eq:degenerate linear oseen}
	\left\{\begin{array}{l}
		- \Delta_\kappa v+\partial_{x_1} v+\nabla p=f_1, \\
		- \Delta_\nu B+\partial_{x_1} B=f_2, \\
		\nabla \cdot v=g,
	\end{array}\right.
\end{equation}
where
$\kappa_1, \nu_1\geq 0$, $\kappa_2,\kappa_3,\nu_2,\nu_3>0$.

\begin{Lem}[$L^q$ estimate for degenerate Oseen system]
	\label{lem2.3}
	Let $f_1, f_2 \in L^q\left(\mathbb{R}^3\right), g \in W^{1, q}\left(\mathbb{R}^3\right)$ with $1<q<2$ and $r=\left(\frac{1}{q}-\frac{1}{2}\right)^{-1}$.  Then we have the following results: \\
	(i) when $\kappa_1\nu_1= 0$, there exists a unique solution  $$(v, B, p) \in\left(L^r\left(\mathbb{R}^3\right)\right)^6 \times\left(\dot{W}^{1, q}\left(\mathbb{R}^3\right) / \mathcal{P}_0\left(\mathbb{R}^3\right)\right)$$ of the system  \eqref{eq:degenerate linear oseen} such that
	$$
	\begin{aligned}
		&\quad\|(v, B)\|_{L^r\left(\mathbb{R}^3\right)}+\|\nabla p\|_{L^q\left(\mathbb{R}^3\right)} \\
		& \quad \leq C(q, \kappa_1, \kappa_2, \kappa_3, \nu_2, \nu_3)\left(\left\|f_1\right\|_{L^q\left(\mathbb{R}^3\right)}+\left\|f_2\right\|_{L^q\left(\mathbb{R}^3\right)}+\|g\|_{W^{1, q}\left(\mathbb{R}^3\right)}\right) .
	\end{aligned}
	$$
	(ii) when $\kappa_1\nu_1\neq 0$, there exists a unique solution 
	$$(v, B, p) \in\left(\dot{W}^{2, q}\left(\mathbb{R}^3\right) \cap L^r\left(\mathbb{R}^3\right)\right)^6 \times\left(\dot{W}^{1, q}\left(\mathbb{R}^3\right) / \mathcal{P}_0\left(\mathbb{R}^3\right)\right)$$
	of the system  \eqref{eq:degenerate linear oseen} such that
	$$
	\begin{aligned}
		& \left\|\left(\nabla^2 v, \nabla^2 B\right)\right\|_{L^q\left(\mathbb{R}^3\right)}+\|(v, B)\|_{L^r\left(\mathbb{R}^3\right)}+\|\nabla p\|_{L^q\left(\mathbb{R}^3\right)} \\
		& \quad \leq C(q, \kappa_1, \kappa_2, \kappa_3, \nu_1, \nu_2, \nu_3)\left(\left\|f_1\right\|_{L^q\left(\mathbb{R}^3\right)}+\left\|f_2\right\|_{L^q\left(\mathbb{R}^3\right)}+\|g\|_{W^{1, q}\left(\mathbb{R}^3\right)}\right) .
	\end{aligned}
	$$
\end{Lem}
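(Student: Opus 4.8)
The plan is to proceed exactly as in the proof of Lemma 3.1, namely by taking the Fourier transform of \eqref{eq:degenerate linear oseen}, solving the resulting algebraic system for $\widehat v$, $\widehat B$ and $\widehat p$ explicitly, and then verifying that all the multipliers that appear satisfy the Lizorkin condition \eqref{eq:bound of phi} with an appropriate $\beta$. Writing $\Delta_\kappa$ in frequency as $-m_\kappa(\xi) := -(\kappa_1\xi_1^2+\kappa_2\xi_2^2+\kappa_3\xi_3^2)$ and similarly $m_\nu(\xi)$, the $B$-equation decouples completely from the $(v,p)$-block and gives $\widehat B = (m_\nu(\xi)+i\xi_1)^{-1}\widehat f_2$. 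Applying $i\xi\cdot$ to the $v$-equation and using $i\xi\cdot\widehat v = \widehat g$ yields $|\xi|^2\widehat p = (m_\kappa(\xi)+i\xi_1)\widehat g - i\xi\cdot\widehat f_1$, hence $\widehat v = (m_\kappa(\xi)+i\xi_1)^{-1}\big(\widehat f_1 - i\xi\widehat p\big)$. So everything is expressed through the two symbols $(m_\kappa+i\xi_1)^{-1}$ and $(m_\nu+i\xi_1)^{-1}$ together with the Riesz-type factors $\xi\otimes\xi/|\xi|^2$ and $(m_\kappa+i\xi_1)/|\xi|^2$.

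The key step is the multiplier verification, and here the two cases split. When $\kappa_1\nu_1\neq 0$ (case (ii)), both $m_\kappa$ and $m_\nu$ are comparable to $|\xi|^2$, so $|m_\kappa(\xi)+i\xi_1|\gtrsim |\xi|^2$ and also $\gtrsim|\xi_1|$, giving the full anisotropic bound $|\xi|^2|m_\kappa+i\xi_1|^{-1}\le C$; one then checks, as in Lemma 3.1, that multiplying by $\xi_i\xi_j$ (to produce $\nabla^2 v$) leaves a symbol satisfying \eqref{eq:bound of phi} with $\beta=0$, while the symbols producing $v$ and $B$ themselves satisfy it with $\beta=\tfrac12$ (so $L^q\to L^r$), and the pressure symbols with $\beta=0$. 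Differentiating these rational symbols in each $\xi_i$ separately only lowers homogeneity by one in that variable, which is exactly compensated by the weight $|\xi_i|^{\kappa_i+\beta}$, so the mixed derivatives up to order $n=3$ are controlled; the constants depend on all six viscosity parameters. When $\kappa_1\nu_1=0$ (case (i)), say $\kappa_1=0$, one has $m_\kappa(\xi)=\kappa_2\xi_2^2+\kappa_3\xi_3^2$, which no longer dominates $|\xi|^2$; however $|m_\kappa(\xi)+i\xi_1|\gtrsim |\xi_1|+\xi_2^2+\xi_3^2$, and this is still enough to bound the symbols producing $v,B\in L^r$ with $\beta=\tfrac12$ (the crucial estimate being $|\xi_1|^{1/2}|\xi_2|^{1/2}|\xi_3|^{1/2}|m_\kappa+i\xi_1|^{-1}\lesssim 1$) and the pressure gradient with $\beta=0$, but one can no longer gain two full derivatives on $v$ — which is why in (i) the solution is only asserted in $L^r$ and not in $\dot W^{2,q}$.

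The main obstacle is precisely the bookkeeping of the anisotropic Lizorkin weights in case (i): one must check that every partial derivative $\partial_{\xi_1}^{\hat\kappa_1}\partial_{\xi_2}^{\hat\kappa_2}\partial_{\xi_3}^{\hat\kappa_3}$ of each symbol, weighted by $|\xi_1|^{\hat\kappa_1+\beta}|\xi_2|^{\hat\kappa_2+\beta}|\xi_3|^{\hat\kappa_3+\beta}$, stays bounded despite $m_\kappa$ having no $\xi_1$-dependence and $m_\nu$ possibly degenerating in $\xi_1$ as well. The point is that differentiating $(m_\kappa+i\xi_1)^{-1}$ in $\xi_1$ produces $-i(m_\kappa+i\xi_1)^{-2}$, and the extra weight $|\xi_1|^{1+\beta}$ (versus the bound $|\xi_1|\lesssim|m_\kappa+i\xi_1|$ available for one factor) together with $|\xi_2|^\beta|\xi_3|^\beta$ (against $\xi_2^2+\xi_3^2\lesssim|m_\kappa+i\xi_1|$) keeps things bounded for $\beta\le\tfrac12$; differentiating in $\xi_2$ or $\xi_3$ brings down a factor $\xi_2$ or $\xi_3$ which is again absorbed. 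After the multiplier estimates are in place, existence and uniqueness in the stated spaces follow by the standard density and closed-graph argument as in \cite{Galdi}, Section VII.4, and the $L^q$ bound is just Theorem \ref{thm:lizorkin} applied term by term.
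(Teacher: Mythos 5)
Your proposal follows essentially the same route as the paper's proof: Fourier transform, explicit inversion of the symbol (the $B$-equation decouples, $\hat p$ comes from dotting the $v$-equation with $i\xi$ and using $i\xi\cdot\hat v=\hat g$, then $\hat v$ is recovered), and Lizorkin's theorem with $\beta=\tfrac12$ for the $L^r$ bounds and $\beta=0$ for the pressure and, in case (ii), for the second derivatives, the key pointwise bound being exactly your $|\xi_1|^{1/2}|\xi_2|^{1/2}|\xi_3|^{1/2}\,\big|\kappa_2\xi_2^2+\kappa_3\xi_3^2+i\xi_1\big|^{-1}\lesssim 1$. The one detail you gloss over is the term $i\xi\hat g/|\xi|^2$ in $\hat v$, whose symbol does not satisfy the $\beta=\tfrac12$ Lizorkin condition by itself (it grows like $|\xi|^{1/2}$ at high frequency); as in the paper's Lemma 3.1 one splits it as $\frac{i\xi\hat g}{|\xi|^2+i\xi_1}-\frac{\xi\xi_1\hat g}{|\xi|^2(|\xi|^2+i\xi_1)}$ so that the first piece acts on $\widehat{\nabla g}$, which is precisely where the $W^{1,q}$ norm of $g$ enters.
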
	
\begin{proof}
	Applying the Fourier transform on the system of  \eqref{eq:degenerate linear oseen}, we obtain that
	$$
	\left\{\begin{array}{l}
		(\kappa_1\xi_1^2 + \kappa_2\xi_2^2 + \kappa_3\xi_3^2 +i \xi_1) \hat{v}(\xi)+i \xi \hat{p}(\xi)=\hat{f}_1(\xi), \\
		(\nu_1\xi_1^2 + \nu_2\xi_2^2 + \nu_3\xi_3^2+i \xi_1 ) \hat{B}(\xi)=\hat{f}_2(\xi), \\
		i \xi \cdot \hat{v}=\hat{g}(\xi).
	\end{array} \right. 
	$$
	Direct calculation shows
	$$
	\begin{aligned}
		& \hat{v}(\xi)=\left(\kappa_1\xi_1^2 + \kappa_2\xi_2^2 + \kappa_3\xi_3^2+i \xi_1\right)^{-1}\left(I_3-\frac{\xi \otimes \xi}{|\xi|^2}\right) \cdot \hat{f}_1(\xi)-\frac{i\xi\hat{g}(\xi)}{|\xi|^2}, \\
		& \hat{B}(\xi)=\left(\nu_1\xi_1^2 + \nu_2\xi_2^2 + \nu_3\xi_3^2+i \xi_1\right)^{-1} \hat{f}_2(\xi), \\
		& \hat{p}(\xi)=-\frac{i \xi \cdot \hat{f}_1(\xi)}{|\xi|^2}+\frac{i \xi_1 \hat{g}(\xi)}{|\xi|^2} + \frac{\kappa_1\xi_1^2 + \kappa_2\xi_2^2 + \kappa_3\xi_3^2}{|\xi|^2} \hat{g}(\xi) .
	\end{aligned}
	$$
	
	(i)	For $\kappa_1\nu_1=0$, first we consider $\Phi_\kappa(\xi):=\left(\kappa_1\xi_1^2 + \kappa_2\xi_2^2 + \kappa_3\xi_3^2+i \xi_1\right)^{-1}$ and $\Phi_\nu(\xi):=\left(\nu_1\xi_1^2 + \nu_2\xi_2^2 + \nu_3\xi_3^2+i \xi_1\right)^{-1}$, which satisfy the conditions in Theorem \ref{thm:lizorkin} with $\beta=\frac{1}{2}$. In fact, 
	\beno
	\left|\xi_1\right|^{1 / 2}\left|\xi_2\right|^{1 / 2}\left|\xi_3\right|^{1 / 2}|\Phi_\kappa(\xi)|\leq \frac{\left(\left|\xi_1\right|+\xi_2^2+\xi_3^2\right)}{\sqrt{\xi_1^2+(\kappa_2\xi_2^2+\kappa_3\xi_3^2)^2}} \leq C (\kappa_2, \kappa_3),
	\eeno
	and $\Phi_\nu(\xi)$ is similar. Second, for the term of  $-\frac{i\xi\hat{g}(\xi)}{|\xi|^2}$, it is similar as the estimates in  \eqref{eq:decomposition}. Hence, by Theorem \ref{thm:lizorkin}  we get
	\ben\label{eq:v,h}
	\|(v,B)\|_{L^r\left(\mathbb{R}^3\right)} \leq C(q, \kappa_2, \kappa_3,\nu_2,\nu_3)\left(\left\|f_1\right\|_{L^q\left(\mathbb{R}^3\right)}+\left\|f_2\right\|_{L^q\left(\mathbb{R}^3\right)}+\|g\|_{W^{1, q}\left(\mathbb{R}^3\right)}\right) .\een
	Finally, for the pressure $i\xi\hat{p}$,  we get
	\beno
	i\xi\hat{p}(\xi)=\frac{\xi\otimes \xi \cdot \hat{f}_1(\xi)}{|\xi|^2}-\frac{ \xi_1\xi \hat{g}(\xi)}{|\xi|^2} + \frac{\kappa_1\xi_1^2 + \kappa_2\xi_2^2 + \kappa_3\xi_3^2}{|\xi|^2} i\xi\hat{g}(\xi), 
	\eeno
	which implies
	\ben\label{eq:p only}
	\|\nabla p\|_{L^q\left(\mathbb{R}^3\right)} \leq C(q, \kappa_1, \kappa_2, \kappa_3)\left(\left\|f_1\right\|_{L^q\left(\mathbb{R}^3\right)}+\left\|f_2\right\|_{L^q\left(\mathbb{R}^3\right)}+\|g\|_{W^{1, q}\left(\mathbb{R}^3\right)}\right) .\een

	
	(ii) For $
	\kappa_1\nu_1\neq0$, it's similar as in \cite{LP}. In fact, the estimates of \eqref{eq:v,h} and \eqref{eq:p only} still hold as in Step I. Moreover, 
	$\xi_i\xi_j\Phi_\kappa(\xi)$ and $\xi_i\xi_j\Phi_\nu(\xi)$ satisfy the condition  in Theorem \ref{thm:lizorkin} with $\beta=0$. The proof is complete. 
\end{proof}

\begin{Lem}
	\label{lem2.7}
	Assume the same conditions of $q, f_1, f_2, g, \mathcal{M}$ and $b$ in Lemma \ref{lem2.6}. Let $\kappa,\nu>0$, and  $(v, B, p)$ satisfies the generalized Oseen system:
	\begin{equation}
		\label{equ2.16}
		\left\{\begin{aligned}
			-\Delta\left(\begin{array}{l}
				\kappa v \\
				\nu B
			\end{array}\right)+\partial_{x_1}\left(\begin{array}{l}
				v \\
				B
			\end{array}\right)+\mathcal{M}\left(\begin{array}{l}
				v \\
				B
			\end{array}\right)+\left(\begin{array}{c}
				\nabla p \\
				0
			\end{array}\right)+\left(\begin{array}{c}
				0 \\
				b \cdot \nabla(\nabla \times B)
			\end{array}\right)&=\left(\begin{array}{l}
				f_1 \\
				f_2
			\end{array}\right),\\
			\nabla \cdot v&=g.
		\end{aligned}\right. 
	\end{equation}
	Then there exists a small constant $\varepsilon_0 >0$ such that if
	$$	\|\mathcal{M}\|_{L^2\left(\mathbb{R}^3\right)}+\|b\|_{L^{\infty}(\mathbb{R}^3)}<\varepsilon_0 ,
	$$
	then there exists a unique $(v, B, p) \in \left(\dot{W}^{2, q}\left(\mathbb{R}^3\right) \cap L^r\left(\mathbb{R}^3\right)\right)^6 \times$ $\left(\dot{W}^{1, q}\left(\mathbb{R}^3\right) / \mathcal{P}_0\left(\mathbb{R}^3\right)\right)$ solves $(\ref{equ2.16})$ such that
	\begin{equation}
		\label{equ2.17}
		\begin{aligned}
			& \left\|\left(\nabla^2 v, \nabla^2 B\right)\right\|_{L^q\left(\mathbb{R}^3\right)}+\|(v, B)\|_{L^r\left(\mathbb{R}^3\right)}+\|\nabla p\|_{L^q\left(\mathbb{R}^3\right)} \\
			& \quad \leq C\left(\kappa, \nu, q, \varepsilon_0\right)\left(\left\|f_1\right\|_{L^q\left(\mathbb{R}^3\right)}+\left\|f_2\right\|_{L^q\left(\mathbb{R}^3\right)}+\|g\|_{W^{1, q}\left(\mathbb{R}^3\right)}\right),
		\end{aligned}
	\end{equation}
	where $r=\left(\frac{1}{q}-\frac{1}{2}\right)^{-1}$.
\end{Lem}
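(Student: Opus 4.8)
The plan is to mirror the Kato-perturbation argument already used for Lemma~\ref{lem2.6}, replacing the mixed principal part there by the genuinely non-degenerate Oseen operator with viscosities $\kappa,\nu>0$. Concretely, I would take $\Delta_\kappa=\kappa\Delta$ and $\Delta_\nu=\nu\Delta$ in Lemma~\ref{lem2.3}, so that $\kappa_1\nu_1=\kappa\nu\neq0$ and part (ii) of that lemma supplies the full second-order linear estimate for the unperturbed system.

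First I would set up the functional framework exactly as in the proof of Lemma~\ref{lem2.6}: let
$$
\mathcal X:=\bigl(\dot W^{2,q}(\mathbb R^3)\cap L^r(\mathbb R^3)\bigr)^6\times \dot W^{1,q}(\mathbb R^3)/\mathcal P_0(\mathbb R^3),\qquad
\mathcal Y:=\bigl(L^q(\mathbb R^3)\bigr)^6\times W^{1,q}(\mathbb R^3),
$$
with the norms introduced there, and define the principal operator $\mathcal K:\mathcal X\to\mathcal Y$,
$$
\mathcal K(v,B,p):=\bigl(-\kappa\Delta v+\partial_{x_1}v+\nabla p,\ -\nu\Delta B+\partial_{x_1}B,\ \nabla\cdot v\bigr),
$$
so that by Lemma~\ref{lem2.3}(ii) the map $\mathcal K$ has a bounded inverse and there is a constant $C_2=C(q,\kappa,\nu)$ with $\|(v,B,p)\|_{\mathcal X}\le C_2\|\mathcal K(v,B,p)\|_{\mathcal Y}$. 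The remaining terms define the perturbation $\mathcal H:\mathcal X\to\mathcal Y$,
$$
\mathcal H(v,B,p):=\Bigl(\mathcal M\bigl(v,B\bigr),\,0\Bigr)+\Bigl(\bigl(0,\ b\cdot\nabla(\nabla\times B)\bigr),\,0\Bigr),
$$
where $\mathcal M(v,B)$ is read as a $6$-vector and the first slot lives in $(L^q(\mathbb R^3))^6$, the second in $W^{1,q}(\mathbb R^3)$.

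Next I would check that $\mathcal H$ is $\mathcal K$-bounded with a small constant. Since $r=(\tfrac1q-\tfrac12)^{-1}$ means $\tfrac1q=\tfrac12+\tfrac1r$, H\"older's inequality gives $\|\mathcal M(v,B)\|_{L^q}\le\|\mathcal M\|_{L^2}\|(v,B)\|_{L^r}$, and trivially $\|b\cdot\nabla(\nabla\times B)\|_{L^q}\le\|b\|_{L^\infty}\|\nabla^2 B\|_{L^q}$; combining these with the a priori estimate for $\mathcal K$ yields
$$
\|\mathcal H(v,B,p)\|_{\mathcal Y}\le\bigl(\|\mathcal M\|_{L^2(\mathbb R^3)}+\|b\|_{L^\infty(\mathbb R^3)}\bigr)\|(v,B,p)\|_{\mathcal X}\le C_2\bigl(\|\mathcal M\|_{L^2}+\|b\|_{L^\infty}\bigr)\|\mathcal K(v,B,p)\|_{\mathcal Y},
$$
i.e.\ $\mathcal H$ is $\mathcal K$-bounded in the sense of \eqref{def2.13} with $a=0$ and $b=C_2(\|\mathcal M\|_{L^2}+\|b\|_{L^\infty})$. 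Choosing $\varepsilon_0:=(C_2+1)^{-1}$, the hypothesis $\|\mathcal M\|_{L^2}+\|b\|_{L^\infty}<\varepsilon_0$ forces $a\|\mathcal K^{-1}\|+b<1$, so Theorem~\ref{lem2.5} applies and shows that $\mathcal S:=\mathcal K+\mathcal H$ is closed and boundedly invertible; unwinding this gives existence and uniqueness of $(v,B,p)\in\mathcal X$ solving \eqref{equ2.16}, and the quantitative bound \eqref{equ2.17} is read off from \eqref{eq:kato}. I do not expect a real obstacle here: the hard multiplier analysis has been carried out once and for all in Lemma~\ref{lem2.3}, and the only points requiring attention are the bookkeeping of the relative-boundedness constants so that the perturbation is genuinely a small operator, and checking that the components of $\mathcal M(v,B)$ and of $b\cdot\nabla(\nabla\times B)$ land in the correct slots of $\mathcal Y$ (in particular that $b\cdot\nabla(\nabla\times B)$ only enters the second block, consistent with the structure of \eqref{equ2.16}).
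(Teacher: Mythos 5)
Your proposal is correct and follows essentially the same route as the paper: the authors invoke Lemma \ref{lem2.3} with $\kappa=\kappa_1=\kappa_2=\kappa_3$, $\nu=\nu_1=\nu_2=\nu_3$ to get the principal estimate and then state that the argument is the same Kato-perturbation scheme as in the proof of Lemma \ref{lem2.6}, which is exactly what you carry out (including the H\"older bound $\|\mathcal M(v,B)\|_{L^q}\le\|\mathcal M\|_{L^2}\|(v,B)\|_{L^r}$, the bound $\|b\cdot\nabla(\nabla\times B)\|_{L^q}\le\|b\|_{L^\infty}\|\nabla^2B\|_{L^q}$, and the choice $\varepsilon_0=(C_2+1)^{-1}$). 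No gaps; your write-up is in fact more explicit than the paper's.
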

\begin{proof}
	Thanks to Lemma \ref{lem2.3}(taking $\kappa = \kappa_1 = \kappa_2 = \kappa_3$ and $\nu = \nu_1 = \nu_2 = \nu_3$), the proof of Lemma \ref{lem2.7} is similar to the proof of Lemma \ref{lem2.6} and we omit it. 
\end{proof}

\begin{proof}[Proof of Theorem \ref{the1.4}.]
	Without loss of generality, let $\alpha = 1$. Denoting $v:=u-u_{\infty}$, it follows that
	\begin{equation}
		\label{equ3.7}
		\left\{\begin{array}{l}
			v \cdot \nabla v+\partial_{x_1} v+\nabla p-B \cdot \nabla B-\kappa\Delta v=0, \\
			v \cdot \nabla B+\partial_{x_1} B-B \cdot \nabla v-\nu\Delta B=\nabla \times((\nabla \times B) \times B), \\
			\nabla \cdot v=0, \quad \nabla \cdot B=0.
		\end{array} \right.
	\end{equation}
	In the same way as in the proof of Theorem \ref{the1.1}, we get
	$$
	\|v\|_{L^6\left(\mathbb{R}^3\right)}+\|B\|_{L^6\left(\mathbb{R}^3\right)}+\|p\|_{L^3(\mathbb{R}^3)}< \infty.
	$$
	As Theorem \ref{the1.1}, we will estimate $L^3$ norm of $v$ and $B$ and prove they are vanishing.
	
	{\bf Step I. $L^3$ norm of $v$ and $B$.} Multiplying $\psi$, defined  in (\ref{eq:psi}), on the both sides of (\ref{equ3.7}), it follows that
	\ben
	\label{equ3.8}
	\left\{ \begin{aligned}
		\partial_{x_1}\left(\begin{array}{c}
			\psi v \\
			\psi B
		\end{array}\right)+\left(\begin{array}{c}
			(\nabla v)^T \chi_{B_M^c},-(\nabla B)^T \chi_{B_M^c} \\
			(\nabla B)^T \chi_{B_M^c}^c,-(\nabla v)^T \chi_{B_M^c}
		\end{array}\right) \cdot\left(\begin{array}{c}
			\psi v \\
			\psi B
		\end{array}\right)\\-\Delta\left(\begin{array}{c}
			\kappa\psi v \\
			\nu\psi B
		\end{array}\right)+\left(\begin{array}{c}
			\nabla(\psi p) \\
			0
		\end{array}\right) 
		\\+\left(\begin{array}{c}
			0\\
			(-B) \chi_{B_M^c} \cdot \nabla(\nabla \times \psi B)
		\end{array}\right)&=\left(\begin{array}{c}
			F_1(\psi) \\
			F_2(\psi)
		\end{array}\right),  \\
		\nabla \cdot(\psi v)&=\nabla \psi \cdot v,
	\end{aligned} \right. 
	\een
	where
	$$
	\begin{aligned}
		& F_1(\psi)=-2\kappa \nabla \psi \cdot \nabla v-\kappa(\Delta \psi) v+v \partial_{x_1} \psi+(\nabla \psi) p, \\
		& F_2(\psi)=-2\nu \nabla \psi \cdot \nabla B-\nu(\Delta \psi) B+B\partial_{x_1} \psi - (B\cdot\nabla)\psi(\nabla\times B) \\
		& \qquad \qquad - B\cdot\nabla(\nabla\psi\times B)-\psi(\nabla\times B)\cdot\nabla B.
	\end{aligned}
	$$
	Note that
	$$
	\begin{aligned}
		\left\|F_1(\psi)\right\|_{L^{\frac{6}{5}}\left(\mathbb{R}^3\right)} \leq & C(\kappa)\left(\|\nabla \psi\|_{L^3\left(\mathbb{R}^3\right)}\|\nabla v\|_{L^2\left(\mathbb{R}^3\right)}+\|\Delta \psi\|_{L^{\frac{3}{2}}\left(\mathbb{R}^3\right)}\|v\|_{L^6\left(\mathbb{R}^3\right)}\right. \\
		& \left.+\left\|\partial_{x_1} \psi\right\|_{L^{\frac{3}{2}}\left(\mathbb{R}^3\right)}\|v\|_{L^6\left(\mathbb{R}^3\right)}+\|\nabla \psi\|_{L^2\left(\mathbb{R}^3\right)}\|p\|_{L^3\left(\mathbb{R}^3\right)}\right) \\
		\leq & C(\kappa, M)<\infty,
	\end{aligned}
	$$
	\begin{equation}
		\label{equ3.9}
		\begin{aligned}
			\left\|F_2(\psi)\right\|_{L^{\frac{6}{5}}\left(\mathbb{R}^3\right)}
			&\leq C(\nu)\left(\|\nabla \psi\|_{L^3\left(\mathbb{R}^3\right)}\|\nabla B\|_{L^2\left(\mathbb{R}^3\right)}+\|\Delta \psi\|_{L^{\frac{3}{2}}\left(\mathbb{R}^3\right)}\|B\|_{L^6\left(\mathbb{R}^3\right)}\right. \\
			&\quad +\left\|\partial_{x_1} \psi\right\|_{L^{\frac{3}{2}}\left(\mathbb{R}^3\right)}\|B\|_{L^6\left(\mathbb{R}^3\right)}+\|B\|_{L^6\left(\mathbb{R}^3\right)}\|\nabla \psi\|_{L^6\left(\mathbb{R}^3\right)}\|\nabla B\|_{L^2\left(\mathbb{R}^3\right)} \\
			&\quad +\left. \|B\|^2_{L^6\left(\mathbb{R}^3\right)}\|\Delta \psi\|_{L^2\left(\mathbb{R}^3\right)}+ \|\psi\|_{L^{\infty}(\mathbb{R}^3)}\|\nabla B\|^{\frac{1}{3}}_{L^{\infty}(\mathbb{R}^3)}\|\nabla B\|^{\frac{5}{3}}_{L^{2}(\mathbb{R}^3)}\right)  \\
			&\leq C(\nu, M)<\infty,
		\end{aligned}
	\end{equation}
	where we  used 
	$$
	\|B\cdot\nabla(\nabla\psi\times B)\|_{L^{\frac{6}{5}}\left(\mathbb{R}^3\right)} \leq \left(\int_{\mathbb{R}^3}|B|^{\frac{12}{5}}|\Delta\psi|^{\frac{6}{5}}dx\right)^\frac{5}{6} + \left(\int_{\mathbb{R}^3}|B|^{\frac{6}{5}}|\nabla\psi|^{\frac{6}{5}}|\nabla B|^{\frac{6}{5}} dx\right)^\frac{5}{6}.
	$$
	Moreover,
	$$
	\begin{aligned}
		\|\nabla \psi \cdot v\|_{W^{1, \frac{6}{5}}\left(\mathbb{R}^3\right)} \leq & \|\nabla \psi\|_{L^{\frac{3}{2}}\left(\mathbb{R}^3\right)}\|v\|_{L^6\left(\mathbb{R}^3\right)}+\left\|\nabla^2 \psi\right\|_{L^{\frac{3}{2}}\left(\mathbb{R}^3\right)}\|v\|_{L^6\left(\mathbb{R}^3)\right.} \\
		& +\|\nabla \psi\|_{L^3\left(\mathbb{R}^3\right)}\|\nabla v\|_{L^2\left(\mathbb{R}^3\right)} \\
		\leq & C(M)<\infty .
	\end{aligned}
	$$
	Let $\varepsilon_0$ as in Lemma \ref{lem2.7}, there exists an $M>0$ such that
	$$
	2\|\nabla v\|_{L^2 (B_M^c)} +2\|\nabla B\|_{L^2 (B_M^c)} +\|B\|_{L^{\infty}(B_M^c)}<\varepsilon_0.
	$$
	Then applying Lemma \ref{lem2.7} to the system (\ref{equ3.8}), choosing $q=\frac{6}{5} $, one derives $\psi v, \psi B \in L^3\left(\mathbb{R}^3\right)$. It follows that
	$$
	\|v\|_{L^3\left(\mathbb{R}^3\right)}+\|B\|_{L^3\left(\mathbb{R}^3\right)} + \|p\|_{L^{\frac{3}{2}}\left(\mathbb{R}^3\right)}<\infty .
	$$

	{\bf Step II. Vanishing of $v$ and $B$.} 
	Multiplying $(\ref{equ3.7})_1$ by $v \varphi_R$,  $(\ref{equ3.7})_2$ by $B \varphi_R$, and integrating over $\mathbb{R}^3$ imply
	$$
	\begin{aligned}
		& \int_{\mathbb{R}^3} \varphi_R\left(\kappa|\nabla v|^2+\nu|\nabla B|^2\right) d x \\
		& =\frac{1}{2} \int_{\mathbb{R}^3} \Delta \varphi_R\left(\kappa|v|^2+\nu|B|^2\right) d x+\frac{1}{2} \int_{\mathbb{R}^3}\left(|v|^2+|B|^2\right) \partial_{x_1} \varphi_R d x \\
		& +\int_{\mathbb{R}^3} p v \cdot \nabla \varphi_R d x+\frac{1}{2} \int_{\mathbb{R}^3}\left[\left(|v|^2+|B|^2\right) v-2(B \cdot v) B\right] \cdot \nabla \varphi_R d x \\
		& -\int_{\mathbb{R}^3} (\nabla\times B)\times B\cdot(B\times \nabla \varphi_R)dx \\
		&:= \sum_{i=1}^5 I_i,
	\end{aligned}
	$$
	where these items are estimated as follows:
	$$
	\begin{aligned}
		\left|I_1\right| 
		& 
		\lesssim R^{-1}\left(\|v\|_{L^3\left(B_{2 R}\setminus B_R\right)}^2+\|B\|_{L^3\left(B_{2 R}\setminus B_R\right)}^2\right) , \\
		\left|I_2\right| 
		& \lesssim\|v\|_{L^3\left(B_{2 R}\setminus B_R\right)}^2+\|B\|_{L^3\left(B_{2 R}\setminus B_R\right)}^2 ,\\
		\left|I_3\right| 
		& \lesssim R^{-1}\|p\|_{L^{\frac{3}{2}}\left(B_{2 R}\setminus B_R\right)}\|v\|_{L^3\left(B_{2 R}\setminus B_R\right)} , \\
		\left|I_4\right| 
		& \lesssim R^{-1}\left(\|v\|_{L^3\left(B_{2 R}\setminus B_R\right)}^3+\|B\|_{L^3\left(B_{2 R}\setminus B_R\right)}^3\right), \\
		\left|I_5\right| 
		& \lesssim R^{-\frac{1}{2}}\|\nabla B\|_{L^2\left(B_{2 R}\setminus B_R\right)} \|B\|^2_{L^6\left(B_{2 R}\setminus B_R\right)} ,
	\end{aligned}
	$$
	which imply $v=B \equiv 0$ for all $x \in \mathbb{R}^3$ by letting $R \rightarrow \infty$. The proof is complete.
\end{proof}

\section{Degenerate MHD system: Proof of Theorem \ref{the1.6}}
\label{sec5}

	Consider 
	the 3D degenerate Oseen system  with some special critical terms  in   $\mathbb{R}^3$ as follows:
	\begin{equation}
		\label{eq:degenerate oseen}
		\left\{\begin{array}{l}
			\partial_{x_1} w_1+\nabla p-(\frac{1}{2}\Delta_\kappa + \frac{1}{2}\Delta_\nu) w_1 - (\frac{1}{2}\Delta_\kappa - \frac{1}{2}\Delta_\nu) w_2  =f_1, \\
			-\partial_{x_1} w_2+\nabla p-(\frac{1}{2}\Delta_\kappa + \frac{1}{2}\Delta_\nu) w_2 - (\frac{1}{2}\Delta_\kappa - \frac{1}{2}\Delta_\nu) w_1 =f_2,  \\
			\nabla \cdot w_1=g,
		\end{array}\right.
	\end{equation}
	where $\Delta_\kappa$ and $\Delta_\nu$  as defined in (\ref{eq:laplace k}).  Applying Theorem \ref{thm:lizorkin}, we derive a type of $L^q$ estimate for the above degenerate Oseen system of (\ref{eq:degenerate oseen}).
	\begin{Lem}[$L^q$ estimate for degenerate Oseen system]\label{lem5.1}
		Let $f_1, f_2 \in L^q\left(\mathbb{R}^3\right), g \in W^{1, q}\left(\mathbb{R}^3\right)$ with $1<q<2$ and $r=\left(\frac{1}{q}-\frac{1}{2}\right)^{-1}$. Then we have the following results: \\
		(i) when $\kappa_1 = \nu_1=0$, there exists a unique solution $\left(w_1, w_2,p\right)\in \left(L^r\left(\mathbb{R}^3\right)\right)^6 \times\left(\dot{W}^{1, q}\left(\mathbb{R}^3\right) / \mathcal{P}_0\left(\mathbb{R}^3\right)\right)$ of the system (\ref{eq:degenerate oseen}) such that
		\ben
		\label{eq:w1w2 estimate'}
		&& \left\|\left(w_1, w_2\right)\right\|_{L^r\left(\mathbb{R}^3\right)}+\|\nabla p\|_{L^q\left(\mathbb{R}^3\right)} \nonumber\\
		& \leq &C(q, \kappa_2, \kappa_3, \nu_2, \nu_3)\left(\left\|f_1\right\|_{L^q\left(\mathbb{R}^3\right)}+\left\|f_2\right\|_{L^q\left(\mathbb{R}^3\right)}+\|g\|_{W^{1, q}\left(\mathbb{R}^3\right)}\right) .
		\een
		(ii) when $\kappa_1>0$ and $\nu_1>0$, there exists a unique solution $$\left(w_1, w_2, p\right)\in\left(\dot{W}^{2, q}\left(\mathbb{R}^3\right) \cap L^r\left(\mathbb{R}^3\right)\right)^6 \times\left(\dot{W}^{1, q}\left(\mathbb{R}^3\right) / \mathcal{P}_0\left(\mathbb{R}^3\right)\right)$$ of the system (\ref{eq:degenerate oseen}) such that
		\ben
		&& \left\|\left(\nabla^2 w_1, \nabla^2 w_2\right)\right\|_{L^q\left(\mathbb{R}^3\right)}+\left\|\left(w_1, w_2\right)\right\|_{L^r\left(\mathbb{R}^3\right)}+\|\nabla p\|_{L^q\left(\mathbb{R}^3\right)} \nonumber\\
		& \leq &C(q, \kappa_1, \kappa_2, \kappa_3, \nu_1, \nu_2, \nu_3)\left(\left\|f_1\right\|_{L^q\left(\mathbb{R}^3\right)}+\left\|f_2\right\|_{L^q\left(\mathbb{R}^3\right)}+\|g\|_{W^{1, q}\left(\mathbb{R}^3\right)}\right) .
		\een
	\end{Lem}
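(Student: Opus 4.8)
The plan is to regard \eqref{eq:degenerate oseen} as the anisotropic counterpart of the mixed Oseen system \eqref{eq:mixed oseen} with Hall coefficient $\alpha'=0$, and to run the Fourier multiplier computation of Lemma \ref{lem:mixed ossen} while keeping the degenerate bookkeeping of Lemma \ref{lem2.3}. Set $K(\xi):=\kappa_1\xi_1^2+\kappa_2\xi_2^2+\kappa_3\xi_3^2$ and $N(\xi):=\nu_1\xi_1^2+\nu_2\xi_2^2+\nu_3\xi_3^2$, so that $\tfrac12\Delta_\kappa+\tfrac12\Delta_\nu$ and $\tfrac12\Delta_\kappa-\tfrac12\Delta_\nu$ have symbols $-\tfrac12(K+N)$ and $-\tfrac12(K-N)$. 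Taking the Fourier transform, \eqref{eq:degenerate oseen} becomes
\[
A_1\hat w_1+E\hat w_2+i\xi\hat p=\hat f_1,\qquad A_2\hat w_2+E\hat w_1+i\xi\hat p=\hat f_2,\qquad i\xi\cdot\hat w_1=\hat g,
\]
with $A_1:=i\xi_1+\tfrac12(K+N)$, $A_2:=-i\xi_1+\tfrac12(K+N)$, $E:=\tfrac12(K-N)$. The algebra of Lemma \ref{lem:mixed ossen} (with $\alpha'=0$) then carries over under the correspondence $\mu_1|\xi|^2\leftrightarrow\tfrac12(K+N)$, $\mu_2|\xi|^2\leftrightarrow E$, once one notes the identities $A_1A_2-E^2=\xi_1^2+K(\xi)N(\xi)$, $A_1-E=i\xi_1+N$, $A_1+E=i\xi_1+K$; thus the roles played in Lemma \ref{lem:mixed ossen} by $(\mu_1^2-\mu_2^2)|\xi|^4+\xi_1^2$, by $(\mu_1-\mu_2)|\xi|^2-i\xi_1$ and by $(\mu_1+\mu_2)|\xi|^2\mp i\xi_1$ are played here by $\xi_1^2+K(\xi)N(\xi)$, by $N(\xi)-i\xi_1$ and by $K(\xi)\mp i\xi_1$.

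I would invert the system in three steps, exactly as in Lemmas \ref{lem:mixed ossen} and \ref{lem2.3}. First, apply $i\xi\cdot$ to the two momentum equations and use $i\xi\cdot\hat w_1=\hat g$: this produces a $2\times2$ linear system for $\hat p$ and the auxiliary scalar $i\xi\cdot\hat w_2$, giving $\hat p=\hat p(\hat f_1,\hat f_2,\hat g)$ with denominator $N(\xi)-i\xi_1$. Second, add the two momentum equations to express $\hat w_2$ through $\hat w_1$ (denominator $K(\xi)-i\xi_1$), substitute back to obtain $(\xi_1^2+KN)\hat w_1=A_2(\hat f_1-i\xi\hat p)-E(\hat f_2-i\xi\hat p)$, insert the pressure formula, and split the $\hat g$-contribution via $\frac{1}{|\xi|^2}i\xi\hat g=\frac{1}{|\xi|^2+i\xi_1}i\xi\hat g-\frac{\xi_1\xi}{|\xi|^2(|\xi|^2+i\xi_1)}\hat g$ as in \eqref{eq:decomposition}; this writes $\hat w_1$ as a finite sum of multipliers applied to $\hat f_1,\hat f_2,\hat g$ and $i\xi\hat g$. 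Third, recover $\hat w_2$ from $\hat w_1$ and $\hat f_1+\hat f_2-2i\xi\hat p$ by the same identities. Existence and uniqueness modulo $\mathcal P_0$ follow from these explicit formulas, the a priori bounds obtained next, and the density of $\mathcal S(\mathbb R^3)$.

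The remaining task is to verify Lizorkin's condition \eqref{eq:bound of phi} for each multiplier, and this is exactly where the dichotomy in the statement comes from. When $\kappa_1>0$ and $\nu_1>0$ one has $K(\xi)\gtrsim|\xi|^2$ and $N(\xi)\gtrsim|\xi|^2$, hence $\xi_1^2+K(\xi)N(\xi)\gtrsim|\xi|^4$, so every multiplier together with its $\xi_i\xi_j$-weighted versions satisfies \eqref{eq:bound of phi} with $\beta=0$ (controlling $\nabla^2w_1,\nabla^2w_2,\nabla p$) and with $\beta=\tfrac12$ (controlling the $L^r$ terms), just as in Lemma \ref{lem:mixed ossen}; this proves (ii). When $\kappa_1=\nu_1=0$ one only has $K(\xi),N(\xi)\gtrsim\xi_2^2+\xi_3^2$, so $\xi_1^2+K(\xi)N(\xi)\gtrsim\xi_1^2+(\xi_2^2+\xi_3^2)^2$; the $\beta=\tfrac12$ bounds for the $w$-multipliers and the $\beta=0$ bounds for the pressure multipliers still hold (as in the proof of Lemma \ref{lem2.3}(i) — for instance the $\hat f_1$-multiplier of $\hat w_1$ is $\frac{A_2}{\xi_1^2+KN}(I-\frac{\xi\otimes\xi}{|\xi|^2})$, and $|\xi_1|^{1/2}|\xi_2|^{1/2}|\xi_3|^{1/2}\frac{|A_2|}{\xi_1^2+KN}\lesssim\frac{(|\xi_1|+\xi_2^2+\xi_3^2)^2}{\xi_1^2+(\xi_2^2+\xi_3^2)^2}\lesssim1$), together with the splitting of the $\hat g$-terms above; this yields the $L^r$ estimate \eqref{eq:w1w2 estimate'}. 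However the $\xi_i\xi_j$-weighted multipliers genuinely blow up along the $\xi_1$-axis — e.g. $\xi_1^2\frac{A_2}{\xi_1^2+KN}\sim|\xi_1|$ when $\xi_2=\xi_3=0$ — so no second-derivative bound is available, which is why (i) only asserts $(w_1,w_2)\in L^r$.

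I expect the main obstacle to be the bookkeeping in this last verification: after differentiating the multipliers up to three times in $\xi_1,\xi_2,\xi_3$ one must track the negative powers of $K$, $N$, $\xi_1^2+KN$, $K\mp i\xi_1$, $N\mp i\xi_1$ and $|\xi|^2+i\xi_1$ that appear, and check that in the fully degenerate case each term is still dominated by the anisotropic weight $|\xi_1|^{\hat\kappa_1+\beta}|\xi_2|^{\hat\kappa_2+\beta}|\xi_3|^{\hat\kappa_3+\beta}$; this forces one to exploit the mismatch of homogeneity between $\xi_1$ (weight one) and $(\xi_2,\xi_3)$ (weight two) inside $K$ and $N$, precisely as in Lemma \ref{lem2.3}, and is routine but lengthy.
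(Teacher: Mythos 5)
Your proposal is correct and follows essentially the same route as the paper: Fourier transform, explicit inversion of the $2\times2$ system (your identities $A_1A_2-E^2=\xi_1^2+KN$, $A_1\mp E=i\xi_1+N$ or $i\xi_1+K$ reproduce exactly the paper's formulas for $\hat p$, $\hat w_1$, $\hat w_2$ with denominators $|\hat\nu\cdot\xi|^2-i\xi_1$ and $|\hat\kappa\cdot\xi|^2|\hat\nu\cdot\xi|^2+\xi_1^2$), the splitting of the $\hat g$-term as in \eqref{eq:decomposition}, and Lizorkin with $\beta=\tfrac12$ for the $L^r$ bounds and $\beta=0$ for the pressure and, in case (ii), the second derivatives. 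The only cosmetic difference is that you eliminate $\hat w_2$ via the sum of the two equations (denominator $K-i\xi_1$) where the paper uses the difference (denominator $N-i\xi_1$); your sample anisotropic estimate and your explanation of why case (i) loses the $\dot W^{2,q}$ bound match the paper's.
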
	
	\begin{proof}
		Denoting $\hat{\kappa}=(\sqrt{\kappa_1}, \sqrt{\kappa_2}, \sqrt{\kappa_3})$ and $\hat{\nu} = (\sqrt{\nu_1}, \sqrt{\nu_2}, \sqrt{\nu_3})$, performing a Fourier transform on the system of \eqref{eq:degenerate oseen}, we obtain that
		\begin{equation}
			\label{eq:w1w2'}
			\left\{\begin{array}{l}
				(\frac12 |\hat{\kappa}\cdot\xi|^2 +\frac12|\hat{\nu}\cdot\xi|^2 + i\xi_1) \hat{w_1} + (\frac12 |\hat{\kappa}\cdot\xi|^2 - \frac12|\hat{\nu}\cdot\xi|^2)\hat{w_2} + i\xi \hat{p} = \hat{f_1},\\
				(\frac12 |\hat{\kappa}\cdot\xi|^2 +\frac12|\hat{\nu}\cdot\xi|^2 - i\xi_1) \hat{w_2} + (\frac12 |\hat{\kappa}\cdot\xi|^2 -\frac12|\hat{\nu}\cdot\xi|^2)\hat{w_1} +   i\xi \hat{p} = \hat{f_2},\\
				i \xi \cdot \hat{w_1}=\hat{g}.
			\end{array}\right.  
		\end{equation}
		Taking $i\xi$ on the both sides of  $(\ref{eq:w1w2'})_1$ and $(\ref{eq:w1w2'})_2$ as the vector inner product, we get
		\begin{equation}
			\label{equ2.5'}
			\left\{\begin{array}{l}
				(\frac12 |\hat{\kappa}\cdot\xi|^2 +\frac12|\hat{\nu}\cdot\xi|^2 + i\xi_1) \hat{g} + (\frac12 |\hat{\kappa}\cdot\xi|^2 -\frac12|\hat{\nu}\cdot\xi|^2) i\xi\cdot\hat{w_2}-|\xi|^2\hat{p} = i\xi\cdot\hat{f_1},\\
				(\frac12 |\hat{\kappa}\cdot\xi|^2 +\frac12|\hat{\nu}\cdot\xi|^2 - i\xi_1) i\xi\cdot \hat{w_2} + (\frac12 |\hat{\kappa}\cdot\xi|^2 -\frac12|\hat{\nu}\cdot\xi|^2 )\hat{g} -|\xi|^2 \hat{p} = i\xi\cdot\hat{f_2},
			\end{array}\right. 
		\end{equation}
		which implies
		\begin{equation}
			\begin{aligned}
				\label{eq:pressure'}
				\hat{p} = \frac{1}{|\hat{\nu}\cdot\xi|^2- i\xi_1}\bigg\{\left(|\hat{\kappa}\cdot\xi|^2|\hat{\nu}\cdot\xi|^2+\xi_1^2\right)\frac{\hat{g}}{|\xi|^2} - (\frac12 |\hat{\kappa}\cdot\xi|^2 +\frac12|\hat{\nu}\cdot\xi|^2 - i\xi_1)\frac{i\xi\cdot\hat{f_1}}{|\xi|^2} \\
				+(\frac12 |\hat{\kappa}\cdot\xi|^2 -\frac12|\hat{\nu}\cdot\xi|^2 )\frac{i\xi\cdot\hat{f_2}}{|\xi|^2}\bigg\}.
			\end{aligned}
		\end{equation}
		By substituting (\ref{eq:pressure'}) into $(\ref{eq:w1w2'})$, we get
		\ben
		\label{eq:G'}
		\hat{w_1} &=& \frac{\frac12 |\hat{\kappa}\cdot\xi|^2 +\frac12|\hat{\nu}\cdot\xi|^2 - i\xi_1}{|\hat{\kappa}\cdot\xi|^2|\hat{\nu}\cdot\xi|^2+\xi_1^2}\left(id - \frac{\xi\otimes\xi}{|\xi|^2}\right)\hat{f_1} \\ \nonumber
		&&+ \frac{\frac12 |\hat{\kappa}\cdot\xi|^2 -\frac12|\hat{\nu}\cdot\xi|^2 }{|\hat{\kappa}\cdot\xi|^2|\hat{\nu}\cdot\xi|^2+\xi_1^2}\left( \frac{\xi\otimes\xi}{|\xi|^2} - id\right)\hat{f_2} -\frac{i\xi\hat{g}}{|\xi|^2},	
		\een
		\ben
		\label{eq:G''}
		\hat{w_2} &=& \frac{|\hat{\nu}\cdot\xi|^2 + i\xi_1}{|\hat{\nu}\cdot\xi|^2 - i\xi_1}\hat{w_1}-\frac{\hat{f_1} -\hat{f_2} }{|\hat{\nu}\cdot\xi|^2 - i\xi_1}\\ \nonumber&=& \left(-\frac{\frac12 |\hat{\kappa}\cdot\xi|^2 -\frac12|\hat{\nu}\cdot\xi|^2 }{|\hat{\kappa}\cdot\xi|^2|\hat{\nu}\cdot\xi|^2+\xi_1^2}\right)\hat{f_1} \\ \nonumber
		&&-\left( \frac{|\hat{\nu}\cdot\xi|^2 + i\xi_1}{|\hat{\nu}\cdot\xi|^2 - i\xi_1}\frac{\frac12 |\hat{\kappa}\cdot\xi|^2 +\frac12|\hat{\nu}\cdot\xi|^2 - i\xi_1}{|\hat{\kappa}\cdot\xi|^2|\hat{\nu}\cdot\xi|^2+\xi_1^2}\frac{\xi\otimes\xi\cdot}{|\xi|^2}\right)\hat{f_1}\\ \nonumber
		&&+ \left(\frac{\frac12 |\hat{\kappa}\cdot\xi|^2 +\frac12|\hat{\nu}\cdot\xi|^2+i\xi }{|\hat{\kappa}\cdot\xi|^2|\hat{\nu}\cdot\xi|^2+\xi_1^2}\right)\hat{f_2} \\ \nonumber
		&&+ \left(\frac{|\hat{\nu}\cdot\xi|^2 + i\xi_1}{|\hat{\nu}\cdot\xi|^2 - i\xi_1}\frac{\frac12 |\hat{\kappa}\cdot\xi|^2 -\frac12|\hat{\nu}\cdot\xi|^2}{|\hat{\kappa}\cdot\xi|^2|\hat{\nu}\cdot\xi|^2+\xi_1^2}\frac{\xi\otimes\xi\cdot}{|\xi|^2}\right)\hat{f_2}\\ \nonumber	
		&&-\frac{|\hat{\nu}\cdot\xi|^2 + i\xi_1}{|\hat{\nu}\cdot\xi|^2 - i\xi_1}\frac{i\xi\hat{g}}{|\xi|^2}.
		\een
		(i)	For $\kappa_1=\nu_1=0$, first we consider 
		$$
		\Phi_{\kappa, \nu}^{1}(\xi):=\frac{\kappa_2\xi_2^2+\kappa_3\xi_3^2}{(\kappa_2\xi_2^2+\kappa_3\xi_3^2)(\nu_2\xi_2^2+\nu_3\xi_3^2)+\xi_1^2},
		$$
		$$
		\Phi_{\kappa, \nu}^2(\xi):=\frac{\nu_2\xi_2^2+\nu_3\xi_3^2}{(\kappa_2\xi_2^2+\kappa_3\xi_3^2)(\nu_2\xi_2^2+\nu_3\xi_3^2)+\xi_1^2}
		$$
		and
		$$
		\Phi_{\kappa, \nu}^3(\xi):=\frac{i\xi_1}{(\kappa_2\xi_2^2+\kappa_3\xi_3^2)(\nu_2\xi_2^2+\nu_3\xi_3^2)+\xi_1^2},
		$$
		which satisfy the conditions in Theorem \ref{thm:lizorkin} with $\beta=\frac{1}{2}$. In fact, 
		\beno
		&&\left|\xi_1\right|^{1 / 2}\left|\xi_2\right|^{1 / 2}\left|\xi_3\right|^{1 / 2}|\Phi_{\kappa, \nu}^{1}(\xi)|\\
		&\leq& C (\kappa_2, \kappa_3)\frac{\xi_1^2+\xi_2^4+\xi_3^4 }{(\kappa_2\xi_2^2+\kappa_3\xi_3^2)(\nu_2\xi_2^2+\nu_3\xi_3^2)+\xi_1^2} \leq C (\kappa_2, \kappa_3,\nu_2,\nu_3),
		\eeno
		and $\Phi_{\kappa, \nu}^2(\xi)$, $\Phi_{\kappa, \nu}^3(\xi)$ are similar. Second, for the term of  $-\frac{i\xi\hat{g}(\xi)}{|\xi|^2}$, it is similar as in  \eqref{eq:decomposition}. Hence, by Theorem \ref{thm:lizorkin}, we get
		\ben\label{eq:v,h'}
		\|(w_1,w_2)\|_{L^r\left(\mathbb{R}^3\right)} \leq C(q, \kappa_2, \kappa_3,\nu_2,\nu_3)\left(\left\|f_1\right\|_{L^q\left(\mathbb{R}^3\right)}+\left\|f_2\right\|_{L^q\left(\mathbb{R}^3\right)}+\|g\|_{W^{1, q}\left(\mathbb{R}^3\right)}\right) .\een
		Finally, for the pressure $i\xi\hat{p}$,  we get
		\beno
		\begin{aligned}
			i\xi\hat{p} = \frac{1}{|\hat{\nu}\cdot\xi|^2- i\xi_1}\bigg\{\left(|\hat{\kappa}\cdot\xi|^2|\hat{\nu}\cdot\xi|^2+\xi_1^2\right)\frac{i\xi\hat{g}}{|\xi|^2} + (\frac12 |\hat{\kappa}\cdot\xi|^2 +\frac12|\hat{\nu}\cdot\xi|^2 - i\xi_1)\frac{\xi\otimes\xi\cdot\hat{f_1}}{|\xi|^2} \\
			-(\frac12 |\hat{\kappa}\cdot\xi|^2 -\frac12|\hat{\nu}\cdot\xi|^2 )\frac{\xi\otimes\xi\cdot\hat{f_2}}{|\xi|^2}\bigg\},
		\end{aligned}
		\eeno
		which implies
		\ben\label{eq:p only'}
		\|\nabla p\|_{L^q\left(\mathbb{R}^3\right)} \leq C(q, \kappa_2, \kappa_3, \nu_2, \nu_3)\left(\left\|f_1\right\|_{L^q\left(\mathbb{R}^3\right)}+\left\|f_2\right\|_{L^q\left(\mathbb{R}^3\right)}+\|g\|_{W^{1, q}\left(\mathbb{R}^3\right)}\right) .\een
		(ii) For $
		\kappa_1, \nu_1>0$, first we consider 
		$$
		\Phi_{\kappa, \nu}^{4}(\xi):=\frac{\kappa_1\xi_1^2+\kappa_2\xi_2^2+\kappa_3\xi_3^2}{(\kappa_1\xi_1^2+\kappa_2\xi_2^2+\kappa_3\xi_3^2)(\nu_1\xi_1^2+\nu_2\xi_2^2+\nu_3\xi_3^2)+\xi_1^2},
		$$
		$$
		\Phi_{\kappa, \nu}^5(\xi):=\frac{\nu_1\xi_1^2+\nu_2\xi_2^2+\nu_3\xi_3^2}{(\kappa_1\xi_1^2+\kappa_2\xi_2^2+\kappa_3\xi_3^2)(\nu_1\xi_1^2+\nu_2\xi_2^2+\nu_3\xi_3^2)+\xi_1^2}
		$$
		and
		$$
		\Phi_{\kappa, \nu}^6(\xi):=\frac{i\xi_1}{(\kappa_1\xi_1^2+\kappa_2\xi_2^2+\kappa_3\xi_3^2)(\nu_1\xi_1^2+\nu_2\xi_2^2+\nu_3\xi_3^2)+\xi_1^2}
		$$
		which satisfy the conditions in Theorem \ref{thm:lizorkin} with $\beta=\frac{1}{2}$. Second, for the term of  $-\frac{i\xi\hat{g}(\xi)}{|\xi|^2}$, it is similar as in  \eqref{eq:decomposition}. Hence, we get
		\beno
		\|(w_1,w_2)\|_{L^r\left(\mathbb{R}^3\right)} \leq C(q, \kappa_1, \kappa_2, \kappa_3, \nu_1, \nu_2, \nu_3)\left(\left\|f_1\right\|_{L^q\left(\mathbb{R}^3\right)}+\left\|f_2\right\|_{L^q\left(\mathbb{R}^3\right)}+\|g\|_{W^{1, q}\left(\mathbb{R}^3\right)}\right) .\eeno
		Finally, for the pressure $i\xi\hat{p}$,  we get
		\beno
		\|\nabla p\|_{L^q\left(\mathbb{R}^3\right)} \leq C(q, \kappa_1, \kappa_2, \kappa_3, \nu_1, \nu_2, \nu_3)\left(\left\|f_1\right\|_{L^q\left(\mathbb{R}^3\right)}+\left\|f_2\right\|_{L^q\left(\mathbb{R}^3\right)}+\|g\|_{W^{1, q}\left(\mathbb{R}^3\right)}\right) .\eeno
		Moreover, $\xi_i\xi_j\Phi_{\kappa, \nu}^{4}(\xi)$, $\xi_i\xi_j\Phi_{\kappa, \nu}^{5}(\xi)$ and $\xi_i\xi_j\Phi_{\kappa, \nu}^{6}(\xi)$ satisfy the conditions of Theorem \ref{thm:lizorkin} with $\beta=0$. The proof is complete. 
	\end{proof}
	
	\begin{Lem}
		\label{lem5.2}
		Assume  the same conditions of $q, f_1, f_2, g, \mathcal{M}$ as in Lemma \ref{lem2.6}. In addition, let $\kappa_1, \nu_1\geq 0$, $\kappa_2,\kappa_3,\nu_2,\nu_3>0$. Let $(v, B, p)$ satisfy the following perturbation degenerate Oseen system:
		\begin{equation}
			\label{equ2.18}
			\left\{\begin{aligned}
				-\left(\begin{array}{l}
					\Delta_\kappa v \\
					\Delta_\nu B
				\end{array}\right)+\partial_{x_1}\left(\begin{array}{l}
					v \\
					B
				\end{array}\right)+\mathcal{M}\left(\begin{array}{l}
					v \\
					B
				\end{array}\right)+\left(\begin{array}{c}
					\nabla p \\
					0
				\end{array}\right)&=\left(\begin{array}{l}
					f_1 \\
					f_2
				\end{array}\right),\\
				\nabla \cdot v&=g.
			\end{aligned}\right. 
		\end{equation}
	    Then, we have the following results:\\
		(i) when $\kappa_1\nu_1= 0$, there exists a small constant $\varepsilon_1 >0$ such that if
		$$
		\|\mathcal{M}\|_{L^2\left(\mathbb{R}^3\right)}<\varepsilon_1,
		$$
		there exists a unique solution $(v, B, p) \in\left(L^r\left(\mathbb{R}^3\right)\right)^6 \times\left(\dot{W}^{1, q}\left(\mathbb{R}^3\right) / \mathcal{P}_0\left(\mathbb{R}^3\right)\right)$ of \eqref{equ2.18} such that
		$$
		\begin{aligned}
			&\quad\|(v, B)\|_{L^r\left(\mathbb{R}^3\right)}+\|\nabla p\|_{L^q\left(\mathbb{R}^3\right)} \\
			& \quad \leq C(q, \varepsilon_1, \kappa_1, \kappa_2, \kappa_3, \nu_2, \nu_3)\left(\left\|f_1\right\|_{L^q\left(\mathbb{R}^3\right)}+\left\|f_2\right\|_{L^q\left(\mathbb{R}^3\right)}+\|g\|_{W^{1, q}\left(\mathbb{R}^3\right)}\right),
		\end{aligned}
		$$
		(ii) when $\kappa_1\nu_1\neq 0$, there exists a small constant $\varepsilon_2 >0$ such that if
		$$
		\|\mathcal{M}\|_{L^2\left(\mathbb{R}^3\right)}<\varepsilon_2,
		$$ there exists a unique solution $$(v, B, p) \in\left(\dot{W}^{2, q}\left(\mathbb{R}^3\right) \cap L^r\left(\mathbb{R}^3\right)\right)^6 \times\left(\dot{W}^{1, q}\left(\mathbb{R}^3\right) / \mathcal{P}_0\left(\mathbb{R}^3\right)\right)$$  of \eqref{equ2.18} such that
		$$
		\begin{aligned}
			& \left\|\left(\nabla^2 v, \nabla^2 B\right)\right\|_{L^q\left(\mathbb{R}^3\right)}+\|(v, B)\|_{L^r\left(\mathbb{R}^3\right)}+\|\nabla p\|_{L^q\left(\mathbb{R}^3\right)} \\
			& \quad \leq C(q,\varepsilon_2, \kappa_1, \kappa_2, \kappa_3, \nu_1, \nu_2, \nu_3)\left(\left\|f_1\right\|_{L^q\left(\mathbb{R}^3\right)}+\left\|f_2\right\|_{L^q\left(\mathbb{R}^3\right)}+\|g\|_{W^{1, q}\left(\mathbb{R}^3\right)}\right),
		\end{aligned}
		$$
		where $r=\left(\frac{1}{q}-\frac{1}{2}\right)^{-1}$.
	\end{Lem}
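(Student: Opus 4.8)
The plan is to recast \eqref{equ2.18} as an operator equation $\mathcal{S}(v,B,p)=(f_1,f_2,g)$ with $\mathcal{S}=\mathcal{K}+\mathcal{H}$, where $\mathcal{K}$ is the linear degenerate Oseen operator of \eqref{eq:degenerate oseen} and $\mathcal{H}$ is the zeroth-order perturbation induced by $\mathcal{M}$, and then to invoke Kato's stability theorem (Theorem \ref{lem2.5}) exactly as in the proofs of Lemma \ref{lem2.6} and Lemma \ref{lem2.7}. In case (i) ($\kappa_1\nu_1=0$) I take
$$\mathcal{X}:=\big(L^r(\mathbb{R}^3)\big)^6\times\dot{W}^{1,q}(\mathbb{R}^3)/\mathcal{P}_0(\mathbb{R}^3),\qquad \mathcal{Y}:=\big(L^q(\mathbb{R}^3)\big)^6\times W^{1,q}(\mathbb{R}^3),$$
with $\|(v,B,p)\|_{\mathcal{X}}:=\|(v,B)\|_{L^r}+\|\nabla p\|_{L^q}$, while in case (ii) ($\kappa_1\nu_1\neq0$) I enlarge $\mathcal{X}$ to $\big(\dot{W}^{2,q}(\mathbb{R}^3)\cap L^r(\mathbb{R}^3)\big)^6\times\dot{W}^{1,q}(\mathbb{R}^3)/\mathcal{P}_0(\mathbb{R}^3)$ and add $\|(\nabla^2v,\nabla^2 B)\|_{L^q}$ to the norm. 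Lemma \ref{lem5.1} is precisely the statement that the operator
$$\mathcal{K}:(v,B,p)\mapsto\big(-\Delta_\kappa v+\partial_{x_1}v+\nabla p,\ -\Delta_\nu B+\partial_{x_1}B,\ \nabla\cdot v\big)$$
is a bounded bijection from $\mathcal{X}$ onto $\mathcal{Y}$, so that $\mathcal{K}^{-1}\in\mathscr{B}(\mathcal{Y},\mathcal{X})$ and there is a constant $C_1$ (equal to $C(q,\kappa_1,\kappa_2,\kappa_3,\nu_2,\nu_3)$ in case (i) and $C(q,\kappa_1,\kappa_2,\kappa_3,\nu_1,\nu_2,\nu_3)$ in case (ii)) with $\|(v,B,p)\|_{\mathcal{X}}\le C_1\|\mathcal{K}(v,B,p)\|_{\mathcal{Y}}$.

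Next I control the perturbation $\mathcal{H}(v,B,p):=(\mathcal{M}(v,B),\,0)\in\mathcal{Y}$. Since $\mathcal{M}\in\big(L^2(\mathbb{R}^3)\big)^{6\times6}$ and $\tfrac1q=\tfrac12+\tfrac1r$, Hölder's inequality gives $\|\mathcal{M}(v,B)\|_{L^q}\le\|\mathcal{M}\|_{L^2}\|(v,B)\|_{L^r}$, hence
$$\|\mathcal{H}(v,B,p)\|_{\mathcal{Y}}\le\|\mathcal{M}\|_{L^2(\mathbb{R}^3)}\,\|(v,B)\|_{L^r(\mathbb{R}^3)}\le C_1\,\|\mathcal{M}\|_{L^2(\mathbb{R}^3)}\,\|\mathcal{K}(v,B,p)\|_{\mathcal{Y}},$$
so $\mathcal{H}$ is $\mathcal{K}$-bounded in the sense of \eqref{def2.13} with constants $a=0$ and $b=C_1\|\mathcal{M}\|_{L^2}$. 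Choosing $\varepsilon_1=\varepsilon_2=(C_1+1)^{-1}$ forces $a\|\mathcal{K}^{-1}\|+b=C_1\|\mathcal{M}\|_{L^2}<1$ whenever $\|\mathcal{M}\|_{L^2}<\varepsilon_j$, so Theorem \ref{lem2.5} yields that $\mathcal{S}=\mathcal{K}+\mathcal{H}$ is closed and invertible with $\|\mathcal{S}^{-1}\|\le C_1/(1-C_1\|\mathcal{M}\|_{L^2})$; reading off the norms of $\mathcal{X}$ and $\mathcal{Y}$ gives existence, uniqueness and the claimed estimates in both (i) and (ii).

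Because the Hall term $b\cdot\nabla(\nabla\times B)$ is absent from \eqref{equ2.18}, there is no genuinely new difficulty here compared with Lemma \ref{lem2.7}: the whole argument is the Kato perturbation scheme built on the linear estimate of Lemma \ref{lem5.1}. The only point deserving a line of care is that in case (i) the space $\mathcal{X}$ carries no bound on $\nabla^2 v$ or $\nabla^2 B$ — the degenerate symbols $\Phi_{\kappa,\nu}^{1},\Phi_{\kappa,\nu}^{2},\Phi_{\kappa,\nu}^{3}$ only satisfy the Lizorkin condition \eqref{eq:bound of phi} with $\beta=\tfrac12$, not with $\beta=0$ — so one must confirm that $\mathcal{H}$ still maps $\mathcal{X}$ into $\mathcal{Y}$; it does, since the bound on $\|\mathcal{M}(v,B)\|_{L^q}$ uses only the $L^r$ component of the $\mathcal{X}$-norm. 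I therefore expect no real obstruction, and it is natural to present the proof as a verbatim repetition of the proof of Lemma \ref{lem2.7} with Lemma \ref{lem2.3} replaced by Lemma \ref{lem5.1}, omitting the routine verifications.
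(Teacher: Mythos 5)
Your overall scheme is exactly the paper's: the paper's proof of Lemma \ref{lem5.2} is one line, ``Thanks to Lemma \ref{lem2.3}, the proof is similar to the proof of Lemma \ref{lem2.6}'', i.e.\ the Kato perturbation argument (Theorem \ref{lem2.5}) applied to the linear degenerate Oseen operator, with $\mathcal{H}=(\mathcal{M}(v,B),0)$ controlled by H\"older with $\tfrac1q=\tfrac12+\tfrac1r$. Your choice of spaces, the bound $a=0$, $b=C_1\|\mathcal{M}\|_{L^2}$, and the remark that case (i) only needs the $L^r$ component of the $\mathcal{X}$-norm are all correct and match what the paper intends.

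The one point you should fix is the attribution of the linear estimate. The operator $\mathcal{K}(v,B,p)=\bigl(-\Delta_\kappa v+\partial_{x_1}v+\nabla p,\ -\Delta_\nu B+\partial_{x_1}B,\ \nabla\cdot v\bigr)$ that you (correctly) write down is the one governed by Lemma \ref{lem2.3} for the system \eqref{eq:degenerate linear oseen}, not by Lemma \ref{lem5.1}: the latter concerns the Els\"asser-type coupled system \eqref{eq:degenerate oseen}, in which the two velocity components carry opposite signs of $\partial_{x_1}$, both equations contain $\nabla p$, and the Laplacians are mixed between $w_1$ and $w_2$. This is not merely cosmetic, because Lemma \ref{lem5.1}(i) is stated under the hypothesis $\kappa_1=\nu_1=0$, whereas Lemma \ref{lem5.2}(i) allows the strictly weaker condition $\kappa_1\nu_1=0$ (exactly one of them may be positive); only Lemma \ref{lem2.3}(i) covers that full range. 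With the citation corrected to Lemma \ref{lem2.3}, your argument goes through verbatim and coincides with the paper's.
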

	\begin{proof}
		Thanks to Lemma \ref{lem2.3}, the proof of Lemma \ref{lem5.2} is similar to the proof of Lemma \ref{lem2.6} and we omit it. 
	\end{proof}
	
	\begin{Lem}
		\label{lem5.3}
		Assume the same conditions of $q, f_1, f_2, g, \mathcal{M}$ in Lemma \ref{lem2.6}. In addition, $\kappa_1, \nu_1\geq 0$, $\kappa_2,\kappa_3,\nu_2,\nu_3>0$. $(w_1, w_2, p)$ satisfies the following perturbation degenerate Oseen system:
		\begin{equation}
			\label{equ2.18'}
			\left\{\begin{aligned}
				- (\frac{1}{2}\Delta_\kappa + \frac{1}{2}\Delta_\nu)\left(\begin{array}{l}
					w_1 \\
					w_2
				\end{array}\right)- (\frac{1}{2}\Delta_\kappa - \frac{1}{2}\Delta_\nu)\left(\begin{array}{l}
					w_1 \\
					w_2
				\end{array}\right)\\+\partial_{x_1}\left(\begin{array}{l}
					w_1 \\
					-w_2
				\end{array}\right)
				+\mathcal{M}\left(\begin{array}{l}
					w_1 \\
					w_2
				\end{array}\right)+\left(\begin{array}{c}
					\nabla p \\
					\nabla p
				\end{array}\right)&=\left(\begin{array}{l}
					f_1 \\
					f_2
				\end{array}\right),\\
				\nabla \cdot v&=g.
			\end{aligned}\right. 
		\end{equation}
	    Then, we have the following results:\\
		(i) when $\kappa_1=\nu_1= 0$, there exists a small constant $\varepsilon_1 >0$ such that if
		$$
		\|\mathcal{M}\|_{L^2\left(\mathbb{R}^3\right)}<\varepsilon_1,
		$$
		there exists a unique solution $(w_1, w_2, p) \in\left(L^r\left(\mathbb{R}^3\right)\right)^6 \times\left(\dot{W}^{1, q}\left(\mathbb{R}^3\right) / \mathcal{P}_0\left(\mathbb{R}^3\right)\right)$ of \eqref{equ2.18'} such that
		$$
		\begin{aligned}
			&\quad\|(w_1, w_2)\|_{L^r\left(\mathbb{R}^3\right)}+\|\nabla p\|_{L^q\left(\mathbb{R}^3\right)} \\
			& \quad \leq C(q, \varepsilon_1, \kappa_2, \kappa_3, \nu_2, \nu_3)\left(\left\|f_1\right\|_{L^q\left(\mathbb{R}^3\right)}+\left\|f_2\right\|_{L^q\left(\mathbb{R}^3\right)}+\|g\|_{W^{1, q}\left(\mathbb{R}^3\right)}\right),
		\end{aligned}
		$$
		(ii) when $\kappa_1\nu_1\neq 0$, there exists a small constant $\varepsilon_2 >0$ such that if
		$$
		\|\mathcal{M}\|_{L^2\left(\mathbb{R}^3\right)}<\varepsilon_2,
		$$ there exists a unique solution $$(w_1, w_2, p) \in\left(\dot{W}^{2, q}\left(\mathbb{R}^3\right) \cap L^r\left(\mathbb{R}^3\right)\right)^6 \times\left(\dot{W}^{1, q}\left(\mathbb{R}^3\right) / \mathcal{P}_0\left(\mathbb{R}^3\right)\right)$$ of \eqref{equ2.18'} such that
		$$
		\begin{aligned}
			& \left\|\left(\nabla^2 w_1, \nabla^2 w_2\right)\right\|_{L^q\left(\mathbb{R}^3\right)}+\|(w_1, w_2)\|_{L^r\left(\mathbb{R}^3\right)}+\|\nabla p\|_{L^q\left(\mathbb{R}^3\right)} \\
			& \quad \leq C(q,\varepsilon_2, \kappa_1, \kappa_2, \kappa_3, \nu_1, \nu_2, \nu_3)\left(\left\|f_1\right\|_{L^q\left(\mathbb{R}^3\right)}+\left\|f_2\right\|_{L^q\left(\mathbb{R}^3\right)}+\|g\|_{W^{1, q}\left(\mathbb{R}^3\right)}\right),
		\end{aligned}
		$$
		where $r=\left(\frac{1}{q}-\frac{1}{2}\right)^{-1}$.
	\end{Lem}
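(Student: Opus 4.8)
The plan is to deduce Lemma \ref{lem5.3} from Kato's stability theorem (Theorem \ref{lem2.5}) in exactly the way Lemma \ref{lem2.6} was deduced from Lemma \ref{lem:mixed ossen}, the only change being that the unperturbed operator is now the degenerate Oseen operator of \eqref{eq:degenerate oseen}, whose invertibility and a priori bounds are supplied by Lemma \ref{lem5.1}. Note that, since the degenerate system \eqref{equ1.3} carries no Hall term, the perturbation in \eqref{equ2.18'} consists solely of the zeroth-order multiplier $\mathcal{M}$, so no $\|b\|_{L^{\infty}}$ smallness is needed.

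First I would fix the functional setting. In case (i) ($\kappa_1=\nu_1=0$) set $\mathcal{X}:=\left(L^r(\mathbb{R}^3)\right)^6\times\dot{W}^{1,q}(\mathbb{R}^3)/\mathcal{P}_0(\mathbb{R}^3)$ and $\mathcal{Y}:=\left(L^q(\mathbb{R}^3)\right)^6\times W^{1,q}(\mathbb{R}^3)$, while in case (ii) ($\kappa_1\nu_1\neq 0$) replace $\left(L^r\right)^6$ by $\left(\dot{W}^{2,q}\cap L^r\right)^6$; equip them with the norms used in Lemma \ref{lem2.6}. Let $\mathcal{T}\colon\mathcal{X}\to\mathcal{Y}$ be the operator carrying $(w_1,w_2,p)$ to the left-hand side of \eqref{eq:degenerate oseen}, i.e. the $\mathcal{M}$-free part of \eqref{equ2.18'}. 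By Lemma \ref{lem5.1} (part (i) or (ii) as appropriate), $\mathcal{T}$ is closed and bijective, $\mathcal{T}^{-1}\in\mathscr{B}(\mathcal{Y},\mathcal{X})$, and there is a constant $C_2$ depending only on $q$ and the $\kappa_i,\nu_i$ with $\|(w_1,w_2)\|_{L^r(\mathbb{R}^3)}\le\|(w_1,w_2,p)\|_{\mathcal{X}}\le C_2\,\|\mathcal{T}(w_1,w_2,p)\|_{\mathcal{Y}}$.

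Next I would introduce the perturbation $\mathcal{A}\colon\mathcal{X}\to\mathcal{Y}$ defined by $\mathcal{A}(w_1,w_2,p):=\big(\mathcal{M}(w_1,w_2)^{\top},\,0\big)$. Since $\mathcal{M}\in\left(L^2(\mathbb{R}^3)\right)^{6\times 6}$ and $\tfrac1q=\tfrac12+\tfrac1r$, Hölder's inequality gives $\|\mathcal{A}(w_1,w_2,p)\|_{\mathcal{Y}}=\|\mathcal{M}(w_1,w_2)^{\top}\|_{L^q(\mathbb{R}^3)}\le\|\mathcal{M}\|_{L^2(\mathbb{R}^3)}\|(w_1,w_2)\|_{L^r(\mathbb{R}^3)}\le C_2\|\mathcal{M}\|_{L^2(\mathbb{R}^3)}\|\mathcal{T}(w_1,w_2,p)\|_{\mathcal{Y}}$, so $\mathcal{A}$ is $\mathcal{T}$-bounded with constants $a=0$ and $b=C_2\|\mathcal{M}\|_{L^2(\mathbb{R}^3)}$. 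Choosing $\varepsilon_1$ in case (i) (resp. $\varepsilon_2$ in case (ii)) with $\varepsilon_j<C_2^{-1}$ forces $a\|\mathcal{T}^{-1}\|+b<1$, so Theorem \ref{lem2.5} shows that $\mathcal{S}:=\mathcal{T}+\mathcal{A}$ is closed and invertible with $\|\mathcal{S}^{-1}\|\le\|\mathcal{T}^{-1}\|/(1-b)$. Reading off $\mathcal{S}(w_1,w_2,p)=(f_1,f_2,g)$ then yields the unique solvability in $\mathcal{X}$ together with the two asserted estimates via \eqref{eq:kato}.

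There is no genuine obstacle here; the content is bookkeeping. The only point deserving care is that in case (i) the space $\mathcal{X}$ controls \emph{no} second derivatives, but this is harmless because $\mathcal{A}$ only reads off $(w_1,w_2)\in L^r$ and returns an $L^q$-function paired with $0\in W^{1,q}$ — there is no derivative loss, since $\mathcal{M}$ acts by multiplication; case (ii) is the same argument with the extra $\dot{W}^{2,q}$ component carried along passively. Accordingly, as for Lemma \ref{lem5.2}, the proof is a word-for-word transcription of the proof of Lemma \ref{lem2.6} with Lemma \ref{lem5.1} substituted for Lemma \ref{lem:mixed ossen}, and the details may safely be omitted.
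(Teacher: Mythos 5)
Your proposal is correct and coincides with the paper's intended argument: the authors dispose of Lemma \ref{lem5.3} in one line by noting that, thanks to Lemma \ref{lem5.1}, the proof is the same as that of Lemma \ref{lem2.6}, which is precisely the Kato-perturbation scheme you carry out (with the correct observation that only the zeroth-order perturbation $\mathcal{M}$ appears, so no $\|b\|_{L^\infty}$ smallness is required, and that the Hölder pairing $\tfrac1q=\tfrac12+\tfrac1r$ makes $\mathcal{M}$ relatively bounded with $a=0$). Your case distinction between the solution spaces with and without the $\dot{W}^{2,q}$ component also matches the two parts of Lemma \ref{lem5.1}, so nothing further is needed.
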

	\begin{proof}
		Thanks to Lemma \ref{lem5.1}, the proof of Lemma \ref{lem5.3} is similar to the proof of Lemma \ref{lem2.6} and we omit it. 
	\end{proof}
	
	\begin{proof}[Proof of Theorem \ref{the1.6}.]
		{\bf \underline{Case I. $u_{\infty}=(1,0,0), B_{\infty} = 0 $}.}  
		Denoting $v:=u-u_{\infty}$, it follows from \eqref{equ1.3} that
		\begin{equation}
			\label{equ3.10}
			\left\{\begin{array}{l}
				v \cdot \nabla v+\partial_{x_1} v+\nabla p-B \cdot \nabla B- \Delta_\kappa v=0, \\
				v \cdot \nabla B+\partial_{x_1} B-B \cdot \nabla v- \Delta_\nu B=0, \\
				\nabla \cdot v=0, \quad \nabla \cdot B=0.
			\end{array}\right.
		\end{equation}
		Multiplying $\psi$, defined  in (\ref{eq:psi}), on the both sides of (\ref{equ3.10}), it follows that
		\ben
		\left\{\begin{aligned}
			\partial_{x_1}\left(\begin{array}{c}
				\psi v \\
				\psi B
			\end{array}\right)+\left(\begin{array}{c}
				(\nabla v) \chi_{B_M^c},-(\nabla B) \chi_{B_M^c} \\
				(\nabla B) \chi_{B_M^c},-(\nabla v) \chi_{B_M^c}
			\end{array}\right) \cdot\left(\begin{array}{c}
				\psi v \\
				\psi B
			\end{array}\right)\\-\left(\begin{array}{c}
				\Delta_\kappa \psi v \\
				\Delta_\nu \psi B
			\end{array}\right)+\left(\begin{array}{c}
				\nabla(\psi p) \\
				0
			\end{array}\right) 
			&=\left(\begin{array}{c}
				F_1^{\prime}(\psi) \\
				F_2^{\prime}(\psi)
			\end{array}\right), \\
			\nabla \cdot(\psi v)&=\nabla \psi \cdot v,
		\end{aligned}\right. 
		\een
		where
		$$
		\begin{aligned}
			& F_1^{\prime}(\psi)=-2  \nabla_\kappa \psi \cdot \nabla_\kappa v-(\Delta_\kappa \psi) v+v \partial_{x_1} \psi+(\nabla \psi) p, \\
			& F_2^{\prime}(\psi)=-2  \nabla_\nu \psi \cdot \nabla_\nu B-(\Delta_\nu \psi) B+B\partial_{x_1} \psi
		\end{aligned}
		$$
		and $\nabla_\kappa := (\sqrt{\kappa_1}\partial_{x_1}, \sqrt{\kappa_2}\partial_{x_2}, \sqrt{\kappa_3}\partial_{x_3})$, $\nabla_\nu := (\sqrt{\nu_1}\partial_{x_1}, \sqrt{\nu_2}\partial_{x_2}, \sqrt{\nu_3}\partial_{x_3})$.
		In the same way as  the proof of Theorem \ref{the1.1}, one can find 
		$$
		\begin{aligned}
			\left\|F_1^{\prime}(\psi)\right\|_{L^{\frac{6}{5}}\left(\mathbb{R}^3\right)} +\left\|F_2^{\prime}(\psi)\right\|_{L^{\frac{6}{5}}\left(\mathbb{R}^3\right)}+\|\nabla \psi \cdot v\|_{W^{1, \frac{6}{5}}\left(\mathbb{R}^3\right)}  < \infty.
		\end{aligned}
		$$
		Let $\varepsilon_1, \varepsilon_2$ as in Lemma \ref{lem5.2}, 
		and by the D-condition, there exists an $M>0$ such that
		$$
		2\|\nabla v\|_{L^2 (B_M^c)} +2\|\nabla B\|_{L^2 (B_M^c)} <\text{min}\{\varepsilon_1, \varepsilon_2\}.
		$$
		Then it follows from Lemma \ref{lem5.2} and the smoothness of solutions that
		\ben\label{eq:vh}
		\|v\|_{L^{6}\left(\mathbb{R}^3\right)} + \|B\|_{L^{6}\left(\mathbb{R}^3\right)} + \|v\|_{L^{3}\left(\mathbb{R}^3\right)} + \|B\|_{L^{3}\left(\mathbb{R}^3\right)} + \|p\|_{L^{3}\left(\mathbb{R}^3\right)} +\|p\|_{L^{\frac{3}{2}}\left(\mathbb{R}^3\right)} < \infty.
		\een
		Multiply $(\ref{equ3.10})_1$ by $v \varphi_R$,  $(\ref{equ3.10})_2$ by $B \varphi_R$ and we have
		\beno
		&& \int_{\mathbb{R}^3} \varphi_R\left(|\nabla_\kappa v|^2+|\nabla_\nu B|^2\right) d x \\
		&=& \frac{1}{2} \int_{\mathbb{R}^3} \left(|v|^2 \Delta_\kappa \varphi_R+|B|^2\Delta_\nu \varphi_R\right) d x+\frac{1}{2} \int_{\mathbb{R}^3}\left(|v|^2+|B|^2\right) \partial_{x_1} \varphi_R d x \\
		& &+\int_{\mathbb{R}^3} p v \cdot \nabla \varphi_R d x+\frac{1}{2} \int_{\mathbb{R}^3}\left[\left(|v|^2+|B|^2\right) v-2(B \cdot v) B\right] \cdot \nabla \varphi_R d x \\
		&:=&\sum_{i=1}^4 I_i.
		\eeno
		By H\"{o}lder's inequality,
		$$
		\begin{aligned}
			\left|I_1\right| 
			& \lesssim R^{-1}\left(\|v\|_{L^3\left(B_{2 R}\setminus B_R\right)}^2+\|B\|_{L^3\left(B_{2 R}\setminus B_R\right)}^2\right),\\
			\left|I_2\right| 
			& \lesssim\|v\|_{L^3\left(B_{2 R}\setminus B_R\right)}^2+\|B\|_{L^3\left(B_{2 R}\setminus B_R\right)}^2, \\
			\left|I_3\right| 
			& \lesssim R^{-1}\|p\|_{L^{\frac{3}{2}}\left(B_{2 R}\setminus B_R\right)}\|v\|_{L^3\left(B_{2 R}\setminus B_R\right)}, \\
			\left|I_4\right| 
			& \lesssim R^{-1}\left(\|v\|_{L^3\left(B_{2 R}\setminus B_R\right)}^3+\|B\|_{L^3\left(B_{2 R}\setminus B_R\right)}^3\right) .
		\end{aligned}
		$$
		Let $R \rightarrow \infty$, we find
		$$
		\int_{\mathbb{R}^3}\left(|\nabla_\kappa v|^2+|\nabla_\nu B|^2\right) dx=0,
		$$
		which implies the following results:\\
		(i)when $\kappa_1\nu_1\neq 0$, $v\equiv v(x_1)$ and $B\equiv B(x_1)$;\\
		(ii)when $\kappa_1>0$ and $\nu_1=0$, $v\equiv v(x_1)$ and $B\equiv 0$;\\
		(iii)when $\kappa_1=0$ and $\nu_1> 0$, $v\equiv 0$ and $B\equiv B(x_1)$;\\
		(iv)when $\kappa_1=\nu_1\equiv 0$, $v= B\equiv 0$.\\
		Due to \eqref{eq:vh}, for any $\kappa_1\geq 0$, $\nu_1\geq 0$, it immediately follows that $ v\equiv0$ and $B\equiv0$.\\

		{\bf \underline{Case II. $u_{\infty}=0, B_{\infty} =  (-1,0,0)$}.}  Denoting $\eta:=B-B_{\infty}$, it follows from \eqref{equ1.3} that
		\begin{equation}\label{eq5.16}
			\left\{\begin{array}{l}
				u \cdot \nabla u+\partial_{x_1} \eta+\nabla p-\eta \cdot \nabla \eta- \Delta_\kappa u=0, \\
				u \cdot \nabla \eta+\partial_{x_1} u-\eta \cdot \nabla u-  \Delta_\nu \eta=0, \\
				\nabla \cdot u=0, \quad \nabla \cdot \eta=0.
			\end{array}\right.
		\end{equation}
		Hence, taking $w_1=u+\eta$ and $w_2=u-\eta$ we have
		\begin{equation}
			\left\{\begin{array}{l}
				w_2 \cdot \nabla w_1+\partial_{x_1} w_1+\nabla p - (\frac{1}{2}\Delta_\kappa + \frac{1}{2}\Delta_\nu)w_1 - (\frac{1}{2}\Delta_\kappa - \frac{1}{2}\Delta_\nu)w_2 =0, \\
				w_1 \cdot \nabla w_2-\partial_{x_1} w_2+\nabla p- (\frac{1}{2}\Delta_\kappa + \frac{1}{2}\Delta_\nu)w_2 - (\frac{1}{2}\Delta_\kappa - \frac{1}{2}\Delta_\nu)w_1 =0, \\
				\nabla \cdot w_1=0, \quad \nabla \cdot w_2=0, \\
				\int_{\mathbb{R}^3}\left|\nabla w_1\right|^2 d x<+\infty, \quad \int_{\mathbb{R}^3}\left|\nabla w_2\right|^2 d x<+\infty.
			\end{array}\right.
		\end{equation}
		Multiplying $\psi$, defined  in (\ref{eq:psi}), on the both sides of (\ref{eq5.16}), it follows that
		\begin{equation}
			\left\{\begin{aligned}
				\partial_{x_1}\left(\begin{array}{c}
					\psi w_1 \\
					-\psi w_2
				\end{array}\right)-(\frac{1}{2}\Delta_\kappa + \frac{1}{2}\Delta_\nu)\left(\begin{array}{c}
					\psi w_1 \\
					\psi w_2
				\end{array}\right)-(\frac{1}{2}\Delta_\kappa - \frac{1}{2}\Delta_\nu)\left(\begin{array}{c}
					\psi w_2 \\
					\psi w_1
				\end{array}\right)\\
				+\left(\begin{array}{cc}
					0 & \left(\nabla w_1\right)^T \chi_{B_M^c} \\
					\left(\nabla w_2\right)^T \chi_{B_M^c} & 0
				\end{array}\right) \cdot\left(\begin{array}{l}
					\psi w_1 \\
					\psi w_2
				\end{array}\right)
				+\left(\begin{array}{c}
					\nabla(\psi p) \\
					\nabla(\psi p)
				\end{array}\right)
				&=\left(\begin{array}{c}
					G_1'(\psi) \\
					G_2'(\psi)
				\end{array}\right), \\
				\nabla \cdot\left(\psi w_1\right)&=\nabla \psi \cdot w_1,
			\end{aligned}\right.
		\end{equation}
		where
		$$
		\begin{aligned}
			G_1'(\psi)=&- \nabla_\kappa \psi \cdot \nabla_\kappa w_1-\frac12(\Delta_\kappa \psi) w_1- \nabla_\nu \psi \cdot \nabla_\nu w_1-\frac12(\Delta_\nu \psi) w_1\\
			&- \nabla_\kappa \psi \cdot \nabla_\kappa w_2-\frac12(\Delta_\kappa \psi) w_2+ \nabla_\nu \psi \cdot \nabla_\nu w_2+\frac12(\Delta_\nu \psi) w_2\\
			&+w_1 \partial_{x_1} \psi+(\nabla \psi) p, \\
			G_2'(\psi)=&- \nabla_\kappa \psi \cdot \nabla_\kappa w_2-\frac12(\Delta_\kappa \psi) w_2- \nabla_\nu \psi \cdot \nabla_\nu w_2-\frac12(\Delta_\nu \psi) w_2\\
			&- \nabla_\kappa \psi \cdot \nabla_\kappa w_1-\frac12(\Delta_\kappa \psi) w_1+ \nabla_\nu \psi \cdot \nabla_\nu w_1+\frac12(\Delta_\nu \psi) w_1\\
			&-w_2 \partial_{x_1} \psi+(\nabla \psi) p,
		\end{aligned}
		$$
		with
		$$
		\left\|G_1'(\psi)\right\|_{L^{6 / 5}\left(\mathbb{R}^3\right)}+\left\|G_2'(\psi)\right\|_{L^{6 / 5}\left(\mathbb{R}^3\right)}+\left\|\nabla \psi \cdot w_1\right\|_{W^{1,6 / 5}\left(\mathbb{R}^3\right)}< \infty.
		$$
		Then it follows from Lemma \ref{lem5.3} and the smoothness of solutions that
		\beno
		\|u\|_{L^{6}\left(\mathbb{R}^3\right)} + \|\eta\|_{L^{6}\left(\mathbb{R}^3\right)} + \|u\|_{L^{3}\left(\mathbb{R}^3\right)} + \|\eta\|_{L^{3}\left(\mathbb{R}^3\right)} + \|p\|_{L^{3}\left(\mathbb{R}^3\right)} +\|p\|_{L^{\frac{3}{2}}\left(\mathbb{R}^3\right)} < \infty.
		\eeno
		By the same process as Case I, we get
		$$
		\int_{\mathbb{R}^3}\left(|\nabla_\kappa u|^2+|\nabla_\nu \eta|^2\right) dx=0,
		$$
		which implies $ u=\eta\equiv0$. The proof is complete.
	\end{proof}

	\noindent {\bf Acknowledgments.}
	W. Wang was supported by National Key R\&D Program of China (No. 2023YFA1009200), NSFC under grant 12071054, and by Dalian High-level Talent Innovation Project (Grant 2020RD09).

\end{document}